\documentclass{article}

\usepackage[utf8]{inputenc}
\usepackage[T1]{fontenc}
\usepackage[english]{babel}
\usepackage{amsmath}
\usepackage{amsthm}
\usepackage{amsfonts}
\usepackage{amssymb}  
\usepackage{xcolor}
\usepackage[a4paper,left=3cm,right=3cm]{geometry}
\usepackage{graphicx}
\usepackage{biblatex}
\usepackage{csquotes}
\usepackage{tikz-cd}
\usepackage{refcount}
\usepackage{hyperref}

\usepackage{thmtools}

\theoremstyle{definition}

\newcounter{sharedcounter}[section]

\newtheorem{lemma}[sharedcounter]{Lemma}

\newtheorem{definition}[sharedcounter]{Definition}
\newtheorem{remark}[sharedcounter]{Remark}
\newtheorem{corollary}{Corollary}

\declaretheorem[name=Definition,numbered=no]{defres*}
\declaretheorem[name=Theorem]{theorem}
\declaretheorem[name=Theorem,numbered=no]{theorem*}
\declaretheorem[name=Proposition,sibling=sharedcounter]{proposition}

\DeclareMathOperator{\pic}{\text{Pic}}
\DeclareMathOperator{\ch}{ch}
\DeclareMathOperator{\rk}{rk}

\DeclareMathOperator{\td}{td}
\DeclareMathOperator{\tr}{Tr}
\newcommand{\pone}{\mathbb{C P}^1}
\newcommand{\chara}[2]{\begin{bmatrix} #1 \\ #2 \end{bmatrix}}

\newcommand{\ov}[1]{\overline{#1}}

\newcommand{\eq}[2][1]{
\begin{equation*}
	\addtolength{\fboxsep}{0pt}
	\begin{alignedat}{#1}
	#2
	\end{alignedat}
\end{equation*}
}

\newcommand{\eqtag}[3][1]{
\begin{equation}
   \label{#3}
	\begin{alignedat}{#1}
	#2
	\end{alignedat}
\end{equation}
}

\renewcommand{\leq}{\leqslant}
\renewcommand{\geq}{\geqslant}

\DeclareBibliographyDriver{book}{%
  \printnames{author}%
  \iffieldundef{title}{}{\addcomma\addspace\mkbibemph{\printfield{title}}}
  \iffieldundef{year}{}{\addcomma\addspace\printfield{year}}%
  \iffieldundef{pages}{}{\addcomma\addspace\printfield{pages}}%
  \finentry}

\DeclareFieldFormat[article]{journaltitle}{#1}

\DeclareBibliographyDriver{article}{%
  \printnames{author}%
  \addcomma\space
  \mkbibemph{\printfield{title}}%
  \addcomma\space
  \printfield{journaltitle}%
  \addcomma\space
  \printfield{volume}%
  \space
  \mkbibparens{\printfield{year}}%
  \addcomma\space
  \printfield{pages}%
  \iffieldundef{eprint}
    {}
    {\addcomma\space\printfield[eprint:arxiv]{eprint}}%
  \finentry
}

\begin{document}

\title{Chern classes of Laughlin bundles\\ on the quasihole moduli space}
\author{Florent Dupont, Semyon Klevtsov}
\date{}

\maketitle

\begin{center}
 {\it\small
\noindent
IRMA, CNRS, Université de Strasbourg, 7 r.~René Descartes, 67084 Strasbourg, France}
   
\end{center}

\begin{abstract}

    We study fractional quantum Hall states with quasihole excitations, on Riemann surfaces of arbitrary genus. For configurations with $m$ quasiholes we construct a vector bundle above the $m$-th symmetric power of the curve so that the fiber at a point $\lbrace w_1,\dots,w_m \rbrace$ corresponds to the state with quasiholes localized at these positions. We determine the Chern character of this bundle via the Grothendieck-Riemann-Roch theorem and show that in the completely filled state, i.e. when the number of particles is maximal, the vector bundle is compatible with the condition of projective flatness. Furthermore, we obtain a generalization of this result to the case of multiple layers and multiple quasihole types. In genus zero and one, we construct explicit wave-functions and verify that the curvature of the associated Chern connection reproduces the predicted Chern classes. The Chern classes obtained match, term by term, the predicted decomposition of the Berry phase under quasihole exchange, into an extensive Aharonov–Bohm contribution and a fractional statistical contribution.

\end{abstract}

\tableofcontents

\section{Introduction}

\subsection{Laughlin wave-functions and quasihole moduli space}

The fractional quantum Hall effect is thought to be due to strong interactions between charge carriers in the material, but solving analytically the multiparticle Hamiltonian is very difficult. Instead, in the planar case, Laughlin proposed the following ansatz for what the multiparticle wave-function should look like:

$$\Psi_b(z_1,...,z_n)=P(z_1,\dots,z_n)\prod_{1\leq \mu<\nu \leq n}(z_\mu-z_\nu)^b \cdot e^{-\frac{1}{4}\sum_{\mu=1}^n|z_\mu|^2}$$ for $z_1,\dots,z_n \in \mathbb{C}$, $b\in\mathbb N$ and where $P$ is a symmetric polynomial in variables $\{z_n\}$. The wave function $\Psi_b$ belongs to $L^2(\mathbb{C}^n, \prod_{\mu=1}^n d^2 z_\mu)$. For odd $b$, $\Psi$ is completely antisymmetric and is interpreted as a fermionic state, whereas for even $b$ it is completely symmetric and corresponds to a bosonic state.

One of the major successes of the Laughlin theory is the prediction of localized excitations, called quasiholes, which follows naturally from the ansatz above. 
Let $w_1,\dots,w_m \in \mathbb{C}$ and
\begin{align}\label{Psib}\nonumber
\Psi_{b,c}(z_1,...,z_n|w_1,\dots,w_m)&\\=&Q(z_1,\dots,z_n)\prod_{1\leq\mu\leq n}\prod_{1\leq\gamma\leq m}(z_\mu-w_\gamma)^c \prod_{1 \leq \mu<\nu \leq n}(z_\mu-z_\nu)^b \cdot e^{-\frac{1}{4}\sum_{\mu=1}^n|z_\mu|^2},
\end{align} 
where $Q$ is again a symmetric polynomial. The state above  belongs to the same Hilbert space $\Psi_{b,c}\in L^2(\mathbb{C}^n, \prod_{\mu=1}^n d^2 z_\mu)$ as before and corresponds to a wave-function where the positions of some of the zeroes of $P$ are fixed. These are the \textit{quasiholes}  localized at $w_1,\dots,w_m$ and their positions are considered as parameters, which can potentially be controlled externally and adiabatically. 

Suggested by Laughlin as particle-like excitations, the quasiholes possess some remarkable  properties. Their electric charge is in fact fractional and quantized in units $1/b$ \cite{Laughlin_1983} and the exchange statistics is fractional as well. Namely, the geometric phase $\varphi$ that the wave function \eqref{Psib} acquires in the adiabatic process of exchanging two quasiholes is predicted \cite{Halperin_1984, Arovas_Schrieffer_Wilczek_1984} to be the sum of two terms
\begin{align}\label{eq:phase}\nonumber
    \varphi&=\varphi_0+\Delta\varphi\\&=
    -\pi\langle n\rangle+\frac{\pi}b,
\end{align}
where the "extensive" part $\varphi_0$ scales with the average number of particles $\langle n\rangle$ enclosed in the closed loop formed by the exchange process. We also refer to \cite[Eq.\ (E.9), App. E]{Can} for the complete version of the formula above in the case of a Riemann surface. The statistical part $\Delta\varphi=\frac{\pi}b$ of the phase signifies the fact that the quasiholes behave not as bosons or fermions, but rather as particles of anyonic statistics predicted for two-dimensional quantum systems in \cite{Leinaas_Myrheim_1977, Wilczek_Shapere_1989}. The statistical phase acquired when one of the quasiholes is moved around another one in a full circle is $\frac{2\pi}b$, twice the phase of Eq.\ \eqref{eq:phase}.

Theoretical studies of anyonic excitations in the QHE, especially the braiding properties of their wave-functions, have led to the development of topological quantum field theories \cite{Wilczek_1982,Halperin_1984,Arovas_Schrieffer_Wilczek_1984,Simon_1983,Berry_1984,Witten1989,FG1990,Moore_Read_1991,Wen_1990,Nayak_Simon_Stern_Freedman_Sarma_2008}. Another direction is the emergence of the anyonic gas in the quantum Hall systems, see e.g. Ref.\ \cite{Lundholm_Rougerie_2016} for the recent work. For a review of history and recent developments in physics and mathematical studies of anyons, we refer the reader to Ref.\ \cite{Lundholm_2023}. 

The quasiholes are considered as indistinguishable particles. Their moduli space is thus the $m$-th symmetric power of $M$ the manifold where the particles live. The diagonal set, where any pair of particles collides, is usually excluded to avoid singular points. Hence $\mathcal M_m= (M^m\setminus \Delta_m)/ \mathfrak{S}_m$, where $\mathfrak{S}_m$ is the $m$th symmetric group and $\Delta_m$ is the "big diagonal", the set in $M^m$, where positions of any two particles coincide.

In this paper, we begin with the observation that for a compact Riemann surface $M=C$ 
the whole symmetric space $S^mC=C^m/ \mathfrak{S}_m$ is in fact a smooth complex manifold of dimension $m$. 
The wave functions $\Psi_{b}$ Eq.\ \eqref{Psib} analytically continue to the diagonal $\Delta_m$ in an obvious fashion, so that when two quasiholes of unit charge merge, the resulting quasihole has a charge two, etc. The holomorphic family of wave functions over the quasihole moduli space $S^mC$ in fact forms a holomorphic vector bundle, and our goal is to compute and interpret its Chern classes. Here we avoid the discussion of whether it is energetically possible to merge two or more quasiholes or split a quasihole of multiple charge, which is a completely legitimate question to pose. Our main point is rather the following. Analytically continuing the parameter space to include the diagonals allows for the computation of the Chern classes which in turn allows to infer the emergence of the anyonic statistics, in the off-diagonal case as well. 

In the quantum Hall effect, the Hall conductance is quantized because it corresponds to a topological invariant, namely the first Chern class of the vector bundle of ground states over the space of Aharonov–Bohm fluxes, divided by its rank, see \cite{Avron_Seiler_Zograf_1994,Thouless_Kohmoto_Nightingale_Den_Nijs_1982,Klein_Seiler_1990,Simon_1983}. For higher genus surfaces, the bundle of Laughlin states over the space of Aharonov–Bohm fluxes 
$\pic^d(C)$ possesses higher Chern classes, and its Chern character has been computed in \cite{Klevtsov_Zvonkine_2022,Klevtsov_Zvonkine_2025}, and before in Refs.\ \cite{Avron_Seiler_Zograf_1994,Klevtsov2016} for the integer case $b=1$. In \cite{Aldonza_Dupont_2025} the Chern character of multilayer quantum Hall states on higher genus surfaces was computed. This paper continues this line of thought by determining the Chern classes for the quasihole moduli space and suggesting their interpretation in light of anyonic statistics of quasiholes. 

In Section \ref{sec:Laughlin}, for each quasihole configuration $w\in S^m C$, we construct a line bundle $L_{b,c,w}$ over $S^nC$ whose sections are Laughlin wave-functions with localized quasiholes at positions given by $w$, particle-particle vanishing given by $b$ and particle-quasihole vanishing given by $c$. We prove that the family $w \to H^0(S^n C, L_{b,c,w})$ forms a vector bundle above $S^m C$, which we call the quasihole bundle. To construct it explicitly, we first build a line bundle above $S^nC \times S^m C$ such that its restriction to $S^nC \times \lbrace w \rbrace$ is $L_{b,c,w}$. We define the quasihole bundle as a pushforward onto $S^mC$ of this universal bundle. We compute its Chern character through a Grothendieck-Riemann-Roch computation. In Section \ref{sec:explicit}, we write explicit wave-functions in the case $C=\pone$ as well as when $C$ is an elliptic curve and give explicit choices of metrics on sections. Using the Chern connection, we compute in another way the complete Chern character of the quasihole bundle.

\subsection{Multilayer wave-functions}

Next we consider the quasihole bundle in the multi-layer case, with multiple quasihole types.
The multi-layer states were  introduced by Halperin \cite{Halperin_1984} as a generalization of the Laughlin case, where the underlying surface comes in $k$ layer states. 

Let $K\in M^{k\times k}(\mathbb{N})$ be a symmetric matrix, and $k$ an integer. Consider the wave-functions of the form  
\begin{equation}\label{eq:Halperin_wavefunctions}
    \Psi_K(\lbrace z^i_\mu \rbrace)=P(\lbrace z_\mu^i \rbrace )\prod_{1\leq i\neq  j \leq k}\prod_{\substack{1\leq \mu \leq n_i \\ 1 \leq \nu \leq n_j}} (z_\mu^i-z_\nu^j)^{K_{ij}}\prod_{1\leq i \leq k}\prod_{1\leq \mu<\nu \leq n_i}(z_\mu^i-z_\nu^i)^{K_{ii}}\cdot e^{-\frac{1}{4}\sum_{\mu, i}|z_\mu^i|^2}.
\end{equation}

This wave-function corresponds to a setting where particles are distributed among $k$ layers, where layer $i$ has $n_i$ particles, and $z_\mu^i$ is the position of a particle labeled as $\mu$ that lives in layer $i$.  The interactions between the particles in one layer are encoded by the vanishing order $K_{ii}$, and between the particles in different layers $i$ and $j$ by $K_{ij}$, for $j=1,\dots,i-1,i+1,\dots,k$. In the equation above, $P$ is a polynomial that is symmetric under permutation layer by layer, i.e. under exchanges $z^i_\mu \leftrightarrow z^i_\nu$.

In this setting, one can also introduce localized quasiholes. A quasihole $w_\mu^s$ is characterized by its type $s$ to which is associated the column vector $\lbrace C_{is},1\leq i \leq k \rbrace$. For $i$ and $s$ fixed, $C_{is}$ is the order of vanishing of the wave-function when a particle in layer $i$ approaches the position $w_\mu^s$ of a quasihole of type $s$. For $q$ different quasihole types, these vanishing data form a matrix $C\in M^{k\times q}(\mathbb{N})$ and the wave function reads
\begin{equation}\label{eq:Halperin_wavefunctionsqh}
    \Psi_{K,C} (\{ z^i_\mu \} | \{ w^s_\gamma \} )=Q(\lbrace z_\mu^i \rbrace )\prod_{\substack{i,s\\ \mu,\gamma}} (z_\mu^i-w_\gamma^s)^{C_{is}} \prod_{\substack{i\neq  j\\\mu,\nu}}(z_\mu^i-z_\nu^j)^{K_{ij}}\prod_{\substack{i\\\mu<\nu}}(z_\mu^i-z_\nu^i)^{K_{ii}}\cdot e^{-\frac{1}{4}\sum_{\mu, i}|z_\mu^i|^2}
\end{equation}
with $Q$ again a polynomial symmetric under $z^i_\mu \leftrightarrow z^i_\nu$.

In section \ref{sec:multilayer} we compute the Chern character of the quasihole bundle in this case.

\subsection{Main results}

For the single-layer Laughlin quasihole bundle, our main results are as follows. Let $$
V:=V_{b,c,d,g,n,m}$$ be the holomorphic vector bundle over the quasihole moduli space $S^mC $, indexed by the vanishing order $b$, the quasihole vanishing order $c$, the degree $d$ of the underlying magnetic line bundle $L$ on the genus-$g$ Riemann surface with $n$ particles and $m$ quasiholes.

The Chern character of the Laughlin quasihole bundle is given by

\begin{restatable}[Chern character in the general case]{theorem}{mainqh}\label{thm:mainqh}
    $$\ch(V)=e^{-c n \xi_m} \sum_{j=0}^g \sum_{k=j}^{g} \binom{n-g+p}{k-g+p}\binom{g-j}{k-j} b^{k-j} \frac{(-c^2\theta_m)^j}{j!} $$
where $p=d-bn-cm-b(g-1)$.
\end{restatable}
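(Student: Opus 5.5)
We construct a universal line bundle $\mathcal L$ on $S^nC\times S^mC$ whose restriction to $S^nC\times\{w\}$ is $L_{b,c,w}$, and set $V=\pi_{2*}\mathcal L$. We compute $\ch(V)$ by Grothendieck--Riemann--Roch, after showing the higher direct images vanish. The natural model is
\[
\mathcal L = \pi_1^*\bigl(L^{(n)}\otimes\mathcal O(-\tfrac b2\Delta_n)\bigr)\otimes \mathcal O(-c\,D),
\]
where $L^{(n)}$ is the symmetrised magnetic bundle and $\Delta_n$ is the big diagonal of $S^nC$ (so that $\prod(z_\mu-z_\nu)^b$ is a section). Here $D\subset S^nC\times S^mC$ is the incidence divisor $\{z_\mu=w_\gamma\}$, i.e. the pullback of the diagonal of $C\times C$ under the addition maps. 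Equivalently, the Laughlin line bundle on $S^nC$ is $L_{b,c,w}=\mathcal O(\text{stuff})\otimes \mathcal O_{C}(-c\,w)^{(n)}$ twisted by $b$-copies of $K_C$. We therefore set up $\mathcal L$ as the ``$b$-Laughlin bundle'' of the line bundle $L\otimes\mathcal O(-c\sum w_\gamma)$, varying in $w$. The degree of that line bundle is $d-cm$, which explains how $p=d-bn-cm-b(g-1)$ enters.

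\textbf{Step 1: reduce to the Picard case.} The map $w\mapsto L(-c\,w)$ factors through the Abel--Jacobi map $S^mC\to \pic^{d-cm}(C)$ composed with multiplication by $c$. I would invoke the known Chern character of the Laughlin bundle over $\pic(C)$ from \cite{Klevtsov_Zvonkine_2022,Klevtsov_Zvonkine_2025} as a black box. Its shape is
\[
\ch=\sum_{j\le k}\binom{n-g+p}{k-g+p}\binom{g-j}{k-j}b^{k-j}\frac{(-\theta)^j}{j!}.
\]
The alternative is to redo GRR on $S^nC\times S^mC$ using Macdonald's description of the cohomology of symmetric products, with classes $\eta,\sigma_i$ on $S^nC$ and $\xi_m,\theta_m$ on $S^mC$. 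Vanishing of $R^1$ follows from Kodaira/Serre vanishing in the range where $p$ makes $L_{b,c,w}$ sufficiently positive, so $V$ is a genuine bundle by Grauert.

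\textbf{Step 2: pull back.} On $S^mC$, pulling back the theta class by Abel--Jacobi gives $\theta_m$, and multiplication by $c$ on the Jacobian scales $H^2$ by $c^2$; this yields $\theta\mapsto c^2\theta_m$. The extra factor $e^{-cn\xi_m}$ does not come from the Jacobian. It arises because the family $L(-c\,w)$ is only a Poincaré family up to a line bundle pulled back from $S^mC$. Concretely, $c_1(\mathcal O(-cD))$ restricted to the $w$-direction contributes $-c\,n\,x$, where $x=\xi_m$ is the class of $\{w: w\ni\text{pt}\}$, since each of the $n$ particles contributes one copy of the point class. So $\mathcal L\cong(\mathrm{id}\times AJ)^*\mathcal P\otimes\pi_2^*\mathcal O(-cn\,\xi_m)$, and by the projection formula $\ch V=e^{-cn\xi_m}\,AJ^*(\ch V_{\pic})$.

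\textbf{Main obstacle.} The hard part is Step 2's normalisation. One must identify exactly the correction line bundle relating $\mathcal O(-cD)$ to the pullback of the normalised Poincaré bundle; this is a computation of $c_1(\mathcal O(D))$ on $C^{n}\times C^m$ in terms of $\eta$, $\xi$ and the mixed class $\sum\sigma_i\sigma_i'$. One must also check that the mixed term pushes forward to exactly $-c^2\theta_m$ and not to a multiple involving $\xi_m$. I would do this by a direct Künneth computation: write $[\Delta_C]=\mathrm{pt}\times1+1\times\mathrm{pt}-\sum_i(\alpha_i\otimes\beta_i-\beta_i\otimes\alpha_i)$, sum over the $nm$ pairs, and square the mixed part. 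The cross terms then produce $\theta_m$ with coefficient $c^2$ per particle, and these resum under GRR into the stated binomial structure.
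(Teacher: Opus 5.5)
Your argument is correct, but it takes a different route from the paper. The paper stays on $S^nC\times S^mC$ throughout:
\begin{itemize}
\item it computes $c_1(\mathcal{L}_{b,c})=b\theta_n+c\eta_{n,m}+p\xi_n-cn\xi_m$ from the diagonal classes $S\Delta_n$ and $S\Delta_{n,m}$;
\item it applies GRR directly to $S^nC\times S^mC\to S^mC$ and pushes forward to $\pic^nC\times S^mC$, using the fibre-integration lemma to get $f(\theta_n)$;
\item it integrates over $\pic^nC$ in a symplectic basis. For each index $l$ the coefficient of $\alpha_n^l\beta_n^l$ is $b-c^2\alpha_m^l\beta_m^l$, and the two binomials come from resumming over subsets $I\subset\{1,\dots,g\}$.
\end{itemize}

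You instead observe that $L_{b,c,w}$ is the quasihole-free Laughlin bundle of $L(-cw)$. By seesaw, $\mathcal{L}_{b,c}\cong(\mathrm{id}\times\phi)^*\mathcal{U}\otimes\pi^*M$, where
\begin{itemize}
\item $\phi(w)=L(-cw)$, i.e.\ $\phi=t_L\circ[-c]\circ AJ$;
\item $\mathcal{U}$ is the universal Laughlin bundle on $S^nC\times\pic^{d-cm}C$;
\item $c_1(M)=-cn\xi_m$, read off on $\{nz_0\}\times S^mC$.
\end{itemize}
Then $\phi^*\theta=c^2\theta_m$ turns the Klevtsov--Zvonkine formula into the statement. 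The paper's own formulas confirm your correction term. Its class $b\theta_n+p\xi_n-\eta_{n,d}$ for $\mathcal{U}$ (the $m=0$ case of Section 4) pulls back, since $\phi$ acts by $-c$ on $H^1$, to $b\theta_n+p\xi_n+c\eta_{n,m}$. This differs from $c_1(\mathcal{L}_{b,c})$ by exactly $-cn\xi_m$.

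What each route buys:
\begin{itemize}
\item Yours gives structure. It explains Theorem~\ref{thm:mainqh} as the Picard-family formula with $\theta\mapsto c^2\theta_m$, twisted by a line bundle from $S^mC$. It also gives an actual isomorphism $V\cong\phi^*V_{\pic}\otimes M$, not just an identity of Chern characters.
\item The paper's is more self-contained: it borrows only the fibre-integration lemma, not the full Picard theorem. Its Grassmann-integral form also extends directly to Theorem~\ref{thm:mainqhmulti}.
\end{itemize}

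Points to make your version airtight:
\begin{itemize}
\item The step $\pi_*(\mathrm{id}\times\phi)^*\mathcal{U}\cong\phi^*\pi'_*\mathcal{U}$ is cohomology and base change, not just the projection formula. It needs the vanishing of all $H^i(S^nC,L_{b,c,w})$ for $i>0$. That vanishing is Kodaira vanishing via ampleness of $L_{b,c,w}\otimes\omega_{S^nC}^{-1}$, not Serre vanishing, and it is inherited from the Picard case.
\item Use the Picard formula in the normalisation where the Poincar\'e bundle is trivial on $\{z_0\}\times\pic^{d-cm}C$. That is the normalisation giving the shape you quote, and the one in which $\mathcal{U}|_{\{nz_0\}\times\pic}$ is trivial. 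Any other normalisation shifts both the formula and $M$.
\item With that normalisation fixed, your ``main obstacle'' is just the one-line seesaw comparison, and the K\"unneth check is unnecessary. Its heuristic is also off: $c^2$ arises once per symplectic index $l$, from the square of the mixed term $c\,\eta_{n,m}$, not once per particle.
\item No $K_C$ twist is involved. The $b(g-1)$ in $p$ comes from $\tfrac12 S\Delta_n=-\theta_n+(n+g-1)\xi_n$.
\end{itemize}
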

Here, $\xi_m$ and $\theta_m$ are the standard cohomology classes in $H^2(S^mC,\mathbb Z)\cap H^{1,1}(S^mC,\mathbb C)$. Here and throughout the article, we use the convention that $\binom{n}{p}$ is zero whenever $p<0$ or $p>n$. In particular, it follows that for $p<0$, $\ch(V)=0$, i.e. there are no Laughlin states with quasiholes at all. The value $p=0$ corresponds to the maximal number of particles $n=n_{\rm max}$ such that Laughlin states exist, in which case we have $$d=bn_{\rm max}+cm+b(g-1)$$ for given $d$. This is the so-called "completely filled" configuration, in the sense that adding another particle becomes impossible. In this case, the formula above simplifies.

\begin{restatable}[Chern character in the completely filled case]{theorem}{mainqhff}
\label{thm:mainqhff}
    Suppose $d=bn+cm+b(g-1)$, then
    $$\ch(V)=b^g e^{-\frac{c^2}{b}\theta_m - c n \xi_m}.$$
\end{restatable}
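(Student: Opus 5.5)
We derive this directly from Theorem \ref{thm:mainqh} by specializing to $p=0$ and resumming. When $d=bn+cm+b(g-1)$ we have $p=0$, and the first binomial becomes $\binom{n-g}{k-g}$. This vanishes unless $k-g\geq 0$, and since $k\leq g$ in the sum, only $k=g$ survives, where it equals $1$. The second binomial becomes $\binom{g-j}{g-j}=1$, and the power of $b$ becomes $b^{g-j}$. (If $n<g$ the binomial $\binom{n-g}{0}$ needs care; with the stated convention we take it to be $1$, assuming as the paper does that $n\geq g$ in the relevant regime.) Hence
\begin{equation*}
\ch(V)=e^{-cn\xi_m}\sum_{j=0}^{g} b^{g-j}\frac{(-c^2\theta_m)^j}{j!}
= b^g e^{-cn\xi_m}\sum_{j=0}^{g}\frac{1}{j!}\Bigl(-\frac{c^2}{b}\theta_m\Bigr)^j .
\end{equation*}

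The remaining step is to identify the truncated sum with the full exponential $e^{-\frac{c^2}{b}\theta_m}$. I would use the standard fact about the class $\theta_m$ on $S^mC$: it is the pullback of the theta class from $\mathrm{Jac}(C)$ under the Abel--Jacobi map. Since $\mathrm{Jac}(C)$ has complex dimension $g$, we get $\theta^{j}=0$ for $j>g$. Independently, $\theta_m^j$ lives in $H^{2j}(S^mC)$, which vanishes for $j>m$. Either way, the terms with $j>g$ vanish, so the sum up to $g$ equals the exponential series. This gives
\[
\ch(V)=b^g e^{-\frac{c^2}{b}\theta_m-cn\xi_m}.
\]

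The only delicate point is this truncation. I expect it to be harmless, given the cited vanishing $\theta^{g+1}=0$ on the Jacobian (Macdonald's description of the cohomology of $S^mC$).
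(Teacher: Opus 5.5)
Your derivation is correct, but it runs in the opposite direction from the paper. You obtain the filled case as a corollary of Theorem~\ref{thm:mainqh}. This is legitimate, since the paper's proof of that theorem does not use the filled case. At $p=0$ only $k=g$ survives, and you then upgrade the truncated sum to $e^{-\frac{c^2}{b}\theta_m}$ via $\theta_m^{g+1}=0$.

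The paper proves the filled case directly. When $p=0$, $c_1(\mathcal{L}_{b,c})=b\theta_n+c\eta_{n,m}-cn\xi_m$ is pulled back from $\pic^nC\times S^mC$. So the projection formula together with ${\pi_1}_\star\td(S^nC)=1$ (Remark~\ref{rem2}, which is just GRR applied to $\mathcal{O}_{S^nC}$) reduces the first pushforward to $e^{c_1(\mathcal{L}_{b,c})}$. In the symplectic basis this exponential factors over $l$. Integrating over $\pic^nC$ then turns each factor into $b-c^2\alpha_m^l\beta_m^l=b\,e^{-\frac{c^2}{b}\alpha_m^l\beta_m^l}$.

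The two routes buy different things. The paper's route needs only the $p=0$ instance of Lemma~\ref{lem:firstpushforward_lemma1}. It produces the projectively flat shape $\rk(V)\,e^{c_1(V)/\rk(V)}$ directly as a product in which nilpotency is built in, so no truncation step is needed. It is also the template reused for the $\pic^dC$ family and, as a Gaussian Berezin integral, for Theorem~\ref{thm:mainqhmulti}. Your route is shorter once the general-$p$ theorem is available, but it inherits that theorem's heavier inputs: the full polynomial $f$ and the subset-counting reindexing.

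There are two small slips. First, the vanishing $H^{2j}(S^mC)=0$ for $j>m$ does not remove the terms with $g<j\le m$, and these are present whenever $m>g$. So only the Jacobian argument ($\theta_m^{g+1}=0$) does the job, and ``either way'' should be dropped. Second, under the paper's convention $\binom{n-g}{0}$ would be $0$, not $1$, when $n<g$. This is harmless, since $n>2g-1$ is assumed throughout.
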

This corresponds to the formula of Theorem \ref{thm:mainqh} with $p=0$.

Our next result is an explicit calculation of the expression above for the Chern character in the completely filled case via the Berry curvature in the low-genus cases. In genus \(g=0\) the explicit form of (holomorphic part of) the Laughlin state in the presence of quasiholes is the single holomorphic section
\[
\prod_{1\leqslant\mu<\nu\leqslant n} (z_\mu - z_\nu)^b \prod_{\substack{1\leqslant\mu\leqslant n\\1\leqslant\gamma\leqslant m}}(z_\mu-w_\gamma).
\]

In genus \(g=1\), the space of holomorphic states is \(b\)-dimensional, and one can choose a basis of the form
\begin{align*}
s_l(.|w):&\; C^n\mapsto H^0(C^n, L^{\boxtimes n})\\ & (z_1,\dots,z_n) \mapsto \; \theta \chara{\frac{l}{b}}{0} \left(\sum_{\mu=1}^n b z_\mu + \sum_{\gamma=1}^m w_\gamma,\,b\tau\right)
\prod_{1\leqslant\mu<\nu\leqslant n} \theta(z_\mu - z_\nu,\tau)^b
\prod_{\substack{1\leqslant\mu\leqslant n\\1\leqslant\gamma\leqslant m}}\theta(z_\mu-w_\gamma,\tau)
\end{align*}
for $0 \leq l \leq b-1$.

Viewed as functions of quasihole coordinates $\{w_\gamma\}$, these sections provide an explicit holomorphic frame of a rank $b^g$ vector bundle $V$ over the parameter space $S^mC$. The hermitian structure on this vector bundle is given by the natural choice of $L^2$ structure on the space of these sections above $S^nC$. There is a canonical Chern connection in the hermitian holomorphic vector bundle with the connection form given by $\omega=H^{-1}\partial_m H$, where $\partial_m=\sum_\gamma\partial_{w_\gamma}\wedge dw_\gamma$ see \cite[Ch.1]{Kobayashi}, where $H$ is
the Gram matrix $H=\langle s_i(.|w) ,s_j(.|w)\rangle_{1\leq i , j \leq \rk(V)}$ of $L^2$ products. These are given by the following $2n$ dimensional integrals. For the sphere, the $L^2$ product is given by Eq.\ \eqref{eq:L2sphere}:
\begin{equation*}
\langle s(.|w),s(.|w) \rangle_V=\int_{S^n C}\prod_{\mu<\nu} |z_\mu - z_\nu |^{2b} \prod_{\mu,\gamma}  |z_\mu-w_\gamma|^2 \prod_\mu h(z_\mu)^d d{\rm Vol}_n(z),
\end{equation*}
where the hermitian metric in the fibers is the round metric $h(z)=\frac1{1+|z|^2}$. 
For the torus, the $L^2$ product is given by Eq. \eqref{eq:L2torus},
$$\langle s(.|w),s'(.|w) \rangle_V = \int_{S^n C} s(z|w) \ov{s'(z|w)} e^{-\frac{2\pi d}{\Im(\tau)}\sum_\mu \Im(z_\mu)^2} \prod_\mu d{\rm Vol}(z).$$
This Gram matrix may be too hard to compute explicitly in any meaningful way, for finite $n$. However, things simplify if one is only interested in computing the curvature $\Omega=\bar\partial( H^{-1}\partial H)\in \Omega^{1,1}({\rm End}(V))$ of the Chern connection, and simplify even further if one is only interested in the De Rham cohomology classes given by $[\tr \Omega^k],\;0\leqslant k\leqslant m$, that is, $\tr \Omega^k$ up to an exterior derivative of a smooth $(2k-1)$-form.

The latter is what is necessary for the computation of the Chern character for $V$, via the formula $\ch (V)=[\tr e^{\frac{i}{2\pi}\Omega}]$, where $\Omega= \ov{\partial}_m H^{-1}\partial_m H$ is the curvature of the Chern connection $\omega$. Our computation confirms the result of Thm. \ref{thm:mainqhff}.

\begin{restatable}[Cohomology classes obtained from the Chern connection]{proposition}{propclasschernconnection}
\label{prop:classchernconnection}
   For Laughlin states on the torus with $m$ quasiholes of charge $c=1$
    $$[\tr e^{\frac{i}{2\pi}\Omega}]=be^{-\frac{1}{b}\theta_m-n\xi_m}$$
\end{restatable}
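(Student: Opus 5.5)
The plan is to compute the Gram matrix $H(w)$ only up to factors that do not affect the cohomology class of $\tr e^{\frac{i}{2\pi}\Omega}$.

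\textbf{Reduction to the centre of mass.} Write $W=\sum_\gamma w_\gamma$. The dependence of each $s_l(\cdot|w)$ on $w$ splits into two parts. The first is the centre-of-mass theta function $\theta\chara{l/b}{0}(bZ+W,b\tau)$, with $Z=\sum_\mu z_\mu$. The second is the product $\prod_{\mu,\gamma}\theta(z_\mu-w_\gamma,\tau)$, which is the same for every $l$. I would use the quasi-periodicity of $\theta$ to show that $|\prod_{\mu,\gamma}\theta(z_\mu-w_\gamma)|^2 e^{-\frac{2\pi}{\Im\tau}\sum(\ldots)}$ depends on the $w_\gamma$ only through two things: a Gaussian factor in $\Im(w_\gamma)$, and quantities that are doubly periodic. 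Then I would change variables $z_\mu\mapsto z_\mu + W/(nb)$, or equivalently separate out the centre-of-mass coordinate. The goal is to write
\[
H_{ll'}(w)=e^{\phi(w)}\,G(w)\,K_{ll'}(W)+\text{corrections}.
\]
Here $e^{\phi}$ is an explicit Gaussian in $\Im w_\gamma$, $G$ is a positive scalar function, and $K(W)$ is the Gram matrix of the level-$b$ theta functions $\theta\chara{l/b}{0}(\cdot,b\tau)$ evaluated at the shifted point. The key point I would try to prove is that, after this change of variables, the integral over the relative coordinates no longer depends on $l$. Then $H$ factorises exactly as a scalar times a matrix depending only on $W$ through a theta-function inner product. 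By the standard orthogonality of theta functions, that matrix is proportional to the identity times $e^{c\,\Im(W)^2/\Im\tau}$ for a suitable constant $c$.

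\textbf{Curvature.} Once the factorisation holds, $\Omega$ is a scalar $(1,1)$-form times the identity, plus $\partial\bar\partial\log G$ times the identity. The term $\partial\bar\partial\log G$ is exact only if $G$ is a globally defined smooth positive function on $S^mC$, that is, only if it is genuinely doubly periodic in each $w_\gamma$. I would check this using the quasi-periodicity computation above. Its contribution to $\tr\Omega^k$ is then exact, so it drops out of the cohomology class, and we may keep only the Gaussian factors.

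\textbf{Identifying the classes.} Two kinds of Gaussian terms remain.
\begin{enumerate}
\item Terms of the form $\sum_\gamma |{\Im w_\gamma}|^2$, each with coefficient proportional to $n$. These come from the $n$ factors $\theta(z_\mu-w_\gamma)$. Pulled back from $C^m$ and descended to $S^mC$, they give $-n\xi_m$. This uses the description of $\xi_m$ as the class of the divisor $x+S^{m-1}C$, whose pullback to $C^m$ is $\sum_\gamma\mathrm{pr}_\gamma^*[\text{pt}]$.
\item A term $\propto\frac{1}{b}|\Im W|^2$ coming from $K(W)$, with a negative sign relative to the first. Its curvature form is $\frac{1}{b}\,\frac{i}{2\Im\tau}\,\partial\bar\partial\,(\Im W)^2$-type, i.e. proportional to $\frac1b\sum_{\gamma,\delta}dw_\gamma\wedge d\bar w_\delta$. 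The diagonal terms $\gamma=\delta$ carry the $\xi_m$ pieces, and the off-diagonal terms $\gamma\neq\delta$ carry the pullback of $\theta$ under the Abel--Jacobi map. I would use the known representative $\theta_m=\frac{i}{2\Im\tau}\sum_{\gamma,\delta}dw_\gamma\wedge d\bar w_\delta$ in genus one, up to the normalisation used in the paper.
\end{enumerate}
The coefficients have to be tracked carefully so that the $\xi_m$-parts coming from $K$ cancel against part of the first contribution, leaving exactly $-n\xi_m$. The rank is $b$, which gives the factor $b$ in front. Exponentiating the scalar curvature then yields $be^{-\theta_m/b-n\xi_m}$, in agreement with Theorem \ref{thm:mainqhff} for $g=1$ and $c=1$.

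The main obstacle I expect is the factorisation step. The relative-coordinate integral must be independent of $l$ and of $W$, apart from the Gaussian. I would try to establish this by using the translation $z_\mu\mapsto z_\mu+\lambda$ for $\lambda\in\frac1b\Lambda$. This translation acts on the $s_l$ by cyclically permuting $l$ and multiplying by phases, which should force $H$ to be scalar. If full independence fails, this symmetry argument alone still shows that $H$ is a scalar times the identity. That is enough, provided the scalar's non-Gaussian part is periodic.
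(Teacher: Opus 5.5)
Your plan breaks at its central step. For $b\geq 2$ the $L^2$ Gram matrix $H(w)$ is not a scalar multiple of the identity at generic quasihole positions, so the factorisation you want to prove is false.

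The quasihole factor $\prod_{\mu,\gamma}\theta_1(z_\mu-w_\gamma,\tau)$ does not decouple from the centre of mass. Integrating out the relative coordinates leaves a weight in $Z$ that depends jointly on $Z$ and the $w_\gamma$, and is only $\Lambda$-periodic. This is already visible for $n=m=1$, where there are no relative coordinates. Your substitution $u=z+W/(nb)=z+w/b$ gives
\[
H_{ll'}(w)=e^{\frac{2\pi}{\Im(\tau)}(1+\frac1b)\Im(w)^2}\int_C \theta\chara{l/b}{0}(bu,b\tau)\,\overline{\theta\chara{l'/b}{0}(bu,b\tau)}\,e^{-\frac{2\pi b}{\Im(\tau)}\Im(u)^2}\,\phi\big(u-(1+\tfrac1b)w\big)\,d{\rm Vol}(u),
\]
with $\phi(x)=|\theta_1(x,\tau)|^2e^{-\frac{2\pi}{\Im(\tau)}\Im(x)^2}$. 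The theta part is now $w$-independent, but the weight $\phi$ has only the periods $\Lambda$. Level-$b$ theta functions are orthogonal only against weights invariant under $u\mapsto u+\frac1b$. The $(l,l')$ entry picks up exactly those Fourier modes of $\phi$ that acquire the phase $q^{l'-l}$ under this shift. These coefficients are Gaussians, hence nonzero, so $H$ is not diagonal.

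Your fallback fails for the same reason. Translating all $z_\mu$ by $\lambda\in\frac1b\Lambda\setminus\Lambda$ moves the particles relative to the zeros at the fixed $w_\gamma$, so it is not a symmetry of the fibre over $w$.

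What does hold is weaker, and it is all that is needed; this is what the paper proves. Set $h(w)=e^{-\frac{2\pi}{\Im(\tau)}(\frac1b\Im(W)^2+n\sum_\gamma\Im(w_\gamma)^2)}$. The automorphy of the frame $s_l$ in $w$ gives $H'=Hh\mapsto PH'P^{-1}$ under $w_\gamma\mapsto w_\gamma+1$, and $H'\mapsto QH'Q^{-1}$ under $w_\gamma\mapsto w_\gamma+\tau$. Here $P=\mathrm{diag}(1,q,\dots,q^{b-1})$ and $Q$ is the cyclic shift. In the example above this just says that the integral changes by conjugation when $w$ is shifted by a period.

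Since the connection form of $H'$ then changes only by constant conjugation, its transgression forms are lattice-invariant. So the traces of the powers of its curvature are exact, and only the scalar factor $h$ survives in cohomology. Equivalently, $h^{-1}\mathrm{Id}$ is itself an admissible hermitian metric on $V$, because the transition data are scalars times constant unitaries. It has scalar curvature, and Chern--Weil classes do not depend on the metric. So your scalar picture is correct in cohomology, but you reach it by comparing metrics, not by evaluating the $L^2$ Gram matrix.

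Your item 2 also needs correcting. Both Gaussians in $h$ enter with the same sign. The whole form $\frac{i}{2\Im(\tau)}dW\wedge d\bar W$, diagonal terms included, represents $\theta_m$ (Lemma \ref{lem:repOfThetaAndXiInGenusOne}). Hence $-\frac1b\theta_m-n\xi_m$ comes out directly, with no cancellation of $\xi_m$-parts.
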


Here we add a few comments on the interpretation of these results.
The rank of the Laughlin quasihole bundle $\rk(V)=b^g$ in the completely filled case corresponds to the Wen-Niu \cite{Wen_Niu_1990} "topological degeneracy" on genus-$g$ surfaces, proved in \cite{Klevtsov_Zvonkine_2025}. The expression in the exponent corresponds to the first Chern class of the bundle divided by its rank
$$
\frac{c_1(V)}{\rk (V)}=-\frac{c^2}{b}\theta_m - c n \xi_m.
$$
In general, one cannot compute the holonomy of a connection in the vector bundle just from its first Chern class, since the latter is defined only in cohomology. However, in the case at hand the correspondence of two terms with the prediction of \eqref{eq:phase} is rather striking.  The theta-class encodes the fractional statistics part of the phase. We interpret this correspondence in the following way. Following e.g. \cite[p.52]{Witten_2016}, consider the adiabatic process on the torus, where first one of the quasiholes is transported around one of the non-trivial cycles, then another one is transported around the dual cycle and vice versa. The accumulated phase difference between the two states is given exactly by $\frac{2\pi}b\int\theta=\frac{2\pi}b$, where the integral is taken over the Jacobian variety of the torus, isomorphic to the torus itself. Under this process the holonomy of the wave function corresponds in fact to the integral of the curvature of the adiabatic connection over the full moduli space (Jacobian torus) and is thus completely controlled by the first Chern class.

The second term is the extensive part proportional to the number of particles $n$ and the charge $c$ of the quasihole as in the geometric phase. The extensive part of the first Chern class has been effectively computed in \cite{Read_2008} for the Laughlin quasiholes on the sphere.

Finally, we note that for the topological quantum computation using anyons \cite{Freedman_Kitaev_Larsen_Wang_2002,Kitaev_2003} it is important that the braiding of the quasiholes is independent of the geometry of the braiding path and only depends on the homotopy class of the path, possibly up to an overall $U(1)$ factor, see e.g. \cite{Read_2009,Nayak_Simon_Stern_Freedman_Sarma_2008} for discussion. This is the defining property of projectively flat vector bundles, where the holonomies of the corresponding connection depend only on $\pi_1(M)$, up to an overall phase. It is well-known that projectively flat bundles must have a Chern character of the form 
$$\ch(V)=\rk (V)\;e^{\frac{c_1(V)}{\rk(V)}},$$
which is exactly the form obtained in Thm.\ \ref{thm:mainqhff} in the completely filled state. 

It is quite hard to construct a unitary projectively flat connection in fractional quantum Hall states, except for some highly symmetric cases such as the torus. 
In Ref.\ \cite{Klevtsov_Zvonkine_2022} the exponential form of the Chern character as above was suggested as a "geometric test" for topological states of matter in the sense that the vector bundles whose Chern character is not exponential do not correspond to a topological state of matter. Note that this form does not hold in the case $p>0$, as in this case the state is not expected to be a topological state of matter.

For the multilayer case with multiple quasiholes, with particle-particle interaction $K$ and particle-quasihole interaction matrix $C$, we have the following result:

\begin{restatable}[Chern character in the multiparticles with multiple quasihole type setting]{theorem}{mainqhmulti}
\label{thm:mainqhmulti}
Suppose that the bilinear form associated to $K-I$ is non-negative and $$\vec{d}=K\vec{n}+C\vec{m}+\vec{K}(g-1).$$ Then 
\eq{
    \ch(V_{K,C,d,g,\vec{n},\vec{m}})=\det(K)^g \exp\left(|(-C^T K^{-1} C).\Theta_m| - \vec{n}^T C \xi_m\right)
    }
where $\xi_m$ and $\Theta_m$ are matrices of cohomology classes defined in Eq. \eqref{eq:bigtheta} and Eq. \eqref{eq:xivect}, and $|A|$ the sum of all the coefficients of a matrix $A$. The dot product $.$ denotes the Hadamard product: for two matrices $A=(A_{ij})_{1\leq i,j\leq n}$ and $B=(B_{ij})_{1\leq i,j\leq n}$, $A.B$ is defined by $(A.B)_{ij}=A_{ij}B_{ij}$. \end{restatable}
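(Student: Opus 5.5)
We follow the strategy used for Theorem~\ref{thm:mainqh}, carried out layer by layer. First we construct the universal line bundle $\mathcal L_{K,C}$ on $\prod_i S^{n_i}C\times\prod_s S^{m_s}C$. It is the tensor product of three pieces: the pullbacks of the magnetic bundles $L_i^{\boxtimes n_i}$; the divisors $K_{ij}\Delta_{ij}$ and $K_{ii}\Delta_i$ expressing particle--particle vanishing (the intralayer diagonal $\Delta_i\subset S^{n_i}C$ enters with the usual factor $\tfrac12$ coming from the symmetric-power normalisation); and the mixed incidence divisors $C_{is}D_{is}$ expressing particle--quasihole vanishing, where $D_{is}\subset S^{n_i}C\times S^{m_s}C$ is the locus where some $z^i_\mu$ equals some $w^s_\gamma$. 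We then set $V=\pi_*\mathcal L_{K,C}$, where $\pi$ is the projection to $S^{\vec m}C:=\prod_s S^{m_s}C$.

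Before applying Grothendieck--Riemann--Roch we need the higher direct images to vanish. We would take this from the multilayer analysis of \cite{Aldonza_Dupont_2025}, which uses the hypothesis that $K-I$ is non-negative together with the filling condition $\vec d=K\vec n+C\vec m+\vec K(g-1)$. Fibrewise the twist by $C\vec m$ only shifts the effective degrees, so the fibre at $w$ is a multilayer bundle of exactly the completely filled type considered there. Hence $R^{>0}\pi_*=0$, and the rank is $\det(K)^g$.

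GRR then gives $\ch(V)=\pi_*\bigl(e^{c_1(\mathcal L)}\td(T_\pi)\bigr)$. To evaluate it we express everything in terms of the standard classes $\eta_i,\theta_i$ on $S^{n_i}C$ and $\xi_s,\theta_s$ on $S^{m_s}C$, together with the mixed classes $\sigma_{ij}$ and $\sigma_{is}$ coming from the symplectic basis of $H^1(C)$. The diagonal and incidence classes are standard: $[\Delta_i]$ involves $\eta_i$, $\theta_i$ and $g$, while $[D_{is}]=m_s\eta_i+n_i\xi_s-\sigma_{is}$. The whole computation reduces to integrating polynomials in these classes over $\prod_i S^{n_i}C$. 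We would organise this integral in the same way as the single-layer computation leading to Theorem~\ref{thm:mainqh}, namely via the Poincaré-type formula $\int e^{\sum_{i,j}A_{ij}\sigma_{ij}+\dots}$. In that formula the $\sigma$-terms form a Gaussian in the $2g$ odd directions, which produces the factor $\det(\cdot)^g$.

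The main obstacle is this Gaussian step. The quadratic form in the odd generators has block matrix
\[
\begin{pmatrix} K & C\\ C^T & 0\end{pmatrix}.
\]
Integrating out the particle variables amounts to completing the square, which leaves $\det(K)^g$ and the Schur complement $-C^TK^{-1}C$ acting on the quasihole odd classes. The latter is then recombined, entry by entry, into the classes $\Theta_m$ of Eq.~\eqref{eq:bigtheta}. This recombination is what yields the Hadamard-product expression $|(-C^TK^{-1}C).\Theta_m|$. The terms linear in $\xi_s$ come directly from the $n_i\xi_s$ part of $[D_{is}]$ and give $-\vec n^T C\xi_m$.

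Finally we must check that all the $\eta_i$-dependence and the Todd contributions cancel exactly when $\vec d$ satisfies the filling condition. This is the multilayer analogue of $p=0$, where the binomial sum of Theorem~\ref{thm:mainqh} collapses to a pure exponential, and we would verify it by reducing to the $k=q=1$ case of Theorem~\ref{thm:mainqhff}. In that case the formula becomes $b^g e^{-\frac{c^2}{b}\theta_m-cn\xi_m}$, which serves as a consistency check on all signs.
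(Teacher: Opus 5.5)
Your proposal is correct and follows essentially the paper's route: the universal bundle $\mathcal L_{K,C}$ built from the diagonal and incidence divisors, vanishing of the higher direct images via the ampleness result of \cite{Aldonza_Dupont_2025} (using $K-I\geq 0$ and $\vec p=0$), Grothendieck--Riemann--Roch, and then a fermionic Gaussian (Wick/Berezin) integral over the odd classes, where the Schur complement gives $\det(K)^g$ and $|(-C^TK^{-1}C).\Theta_m|$ and the incidence classes give $-\vec n^TC\xi_m$.

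The one step you leave vague, the disappearance of the Todd class, is handled in the paper without appealing to Theorem \ref{thm:mainqhff}. Since $\vec p=0$, the class $c_1(\mathcal L_{K,C})$ has no $\xi_{n_i}$-component, so it is pulled back along $\pi_1$ from $\prod_i\pic^{n_i}C\times\prod_s S^{m_s}C$. The projection formula together with ${\pi_1}_*\td(S^{n_i}C)=1$ (Remark \ref{rem2}, applied layer by layer) then gives ${\pi_1}_*(e^{c_1}\td)=e^{c_1}$. After this, ${\pi_2}_*$ is exactly a Berezin integral over the exterior algebra $H^*(\prod_i\pic^{n_i}C)$, which is what justifies your Gaussian step.
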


{\bf Acknowledgments.} We would like to thank Mar\'ia Abad Aldonza, Igor Burban and Dimitri Zvonkine for collaboration on \cite{Aldonza_Dupont_2025,Klevtsov_Zvonkine_2022,Klevtsov_Zvonkine_2025,Burban_Klevtsov_2025,burban2024norms}, and for useful discussions on the  algebro-geometric  approach to quantum Hall states. The second author was partly supported by the ANR-20-CE40-0017 grant, the Initiative d’Excellence program and the Institute for Advanced Study Fellowship of the University of Strasbourg.

\section{Laughlin wave-functions with localized quasiholes}
\label{sec:Laughlin}

\subsection{Geometric setting}

We consider the setting in which the particles are trapped on a Riemann surface $C$ of genus $g$ (or multiple copies of $C$). The magnetic field is represented by a $U(1)$-principal bundle $P\to C$ equipped with a connection $\nabla$.

\begin{proposition}
The associated magnetic line bundle $L=P\times_{U(1)} \mathbb{C}$ inherits a canonical hermitian metric, connection, and holomorphic structure. Because of the way they are obtained, those three structures are compatible in the sense that the connection is the Chern connection associated with the metric and the holomorphic structure.

\end{proposition}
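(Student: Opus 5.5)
The argument is the standard associated-bundle construction, carried out in three stages: metric, connection, holomorphic structure. We then check that the connection is the Chern connection.

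\textbf{Metric.} Since $U(1)$ acts on $\mathbb{C}$ by unitary multiplication, the standard hermitian product $\langle u,v\rangle=u\bar v$ on $\mathbb{C}$ is $U(1)$-invariant. Hence $h([p,u],[p,v]):=u\bar v$ is well defined on $L=P\times_{U(1)}\mathbb{C}$: replacing $(p,u)$ by $(pg,g^{-1}u)$ does not change the value. It depends smoothly on the base point because local sections of $P$ give local trivialisations of $L$ in which $h$ is the constant standard metric.

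\textbf{Connection.} Take the principal connection $\nabla$ as a $\mathfrak{u}(1)=i\mathbb{R}$-valued connection form $A$ on $P$. It induces a covariant derivative on sections of $L$. In a local trivialisation given by a local section $\sigma$ of $P$, write $s=[\sigma,f]$ and set $\nabla s=[\sigma, df+(\sigma^*A)f]$. Under a change $\sigma'=\sigma g$ the local form transforms by $g^{-1}dg$, so $\nabla$ is well defined. Because $\sigma^*A$ is imaginary, we get
$$d\,h(s,s')=h(\nabla s,s')+h(s,\nabla s'),$$
since $\sigma^*A+\overline{\sigma^*A}=0$. So $\nabla$ is a metric connection.

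\textbf{Holomorphic structure.} Split $\nabla=\nabla^{1,0}+\nabla^{0,1}$ using the complex structure of $C$. Take $\bar\partial_L:=\nabla^{0,1}$. This is a $\bar\partial$-operator satisfying the Leibniz rule. Since $C$ has complex dimension $1$, there are no $(0,2)$-forms, so $(\nabla^{0,1})^2=0$ automatically. By the Koszul–Malgrange (Newlander–Nirenberg in dimension one) theorem, $\bar\partial_L$ defines a unique holomorphic structure whose local holomorphic sections are the solutions of $\bar\partial_L s=0$.

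Concretely, we can avoid quoting the theorem: locally solve $\bar\partial u=-\sigma^*A^{0,1}$ by the $\bar\partial$-Poincaré lemma, and then $e^{u}\sigma$ is a nonvanishing local holomorphic frame. This step, integrability, is the only nontrivial point, and it is exactly where $\dim_{\mathbb{C}}C=1$ is used.

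\textbf{Compatibility.} The Chern connection of $(L,h,\bar\partial_L)$ is the unique connection that is metric-compatible and whose $(0,1)$-part equals $\bar\partial_L$. Our $\nabla$ is metric by the second step and satisfies $\nabla^{0,1}=\bar\partial_L$ by construction. By uniqueness (Kobayashi, Ch.~1) it is the Chern connection. Uniqueness itself is a one-line check: two such connections differ by a $1$-form $\alpha$ of type $(1,0)$ with $\alpha+\bar\alpha=0$, which forces $\alpha=0$.
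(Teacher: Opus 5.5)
Your proof is correct and follows the same route as the paper. The metric comes from the $U(1)$-invariant product on $\mathbb{C}$, the holomorphic structure is $\nabla^{0,1}$ (integrable because $\dim_{\mathbb{C}} C=1$), and compatibility reduces to the local unitarity identity obtained from $\sigma^*A+\overline{\sigma^*A}=0$. Beyond the paper, you spell out gauge-invariance of the definitions, a local holomorphic frame (write it as $[\sigma,e^{u}]$ rather than $e^{u}\sigma$, since $e^{u}$ need not take values in $U(1)$), and the uniqueness of the Chern connection, which the paper uses implicitly when it reduces the check to unitarity.
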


\begin{proof}
Let $P$ be a $U(1)$-bundle with connection form $A \in \Omega^1(P,u(1))$, which is a $U(1)$-equivariant form on $P$. Recall that $P \times_{U(1)} \mathbb{C}$ is defined as $P\times \mathbb{C}$ quotiented by $(p,z)\sim(g.p,g^{-1} z)$ for any $g \in U(1)$. We get a hermitian metric $h$ by defining $h([p,z],[p,z'])= z \ov{z'}$.

$L$ lives above a complex manifold, so the connection $\nabla$ splits as a holomorphic and an anti-holomorphic part: $\nabla=\nabla^{1,0}+\nabla^{0,1}$. Because $C$ is 1-dimensional, we automatically have $({\nabla^{0,1}})^2=0$, thus $\nabla^{0,1}$ defines a Dolbeault operator and gives a holomorphic structure to $L$.

To see that the connection is the Chern connection associated to $h$ and $\ov{\partial}$, it remains to check that the connection is unitary. Let $\sigma$ be a section of $L$. Pick a local trivialization of $P$ given by $s:U \subset C \to P$. Locally, $\sigma=[s(x),f(x)]$ and the connection on $L$ is given locally by $\nabla=d+s^\star A$.

Since $(\nabla s,s)_L=df \ov{f} + s^\star A f \ov{f}$ and $(s,\nabla s)_L=df \ov{f} - s^\star A f \ov{f}$ where the minus sign comes from the fact that $s^\star A$ is purely imaginary  as a form valued in the Lie algebra of $U(1)$, we get $$d(s,s)_L=df \ov{f} +  f d\ov{f}=(\nabla s, s)_L+(s,\nabla s)_L.$$

We are going to use this line bundle $L$ to construct a line bundle whose sections  are quantum states with quasiholes.

\end{proof}

\subsection{Construction of the quasihole bundle}

Denote by $n$ the number of particles. We can build a line bundle on the Cartesian product $C^{n}$ as $L^{\boxtimes n}=\bigotimes_\mu \pi_\mu^*L$ where $\pi_\mu:C^n\rightarrow C$ denotes the projection over the $\mu$-th factor. Let $m\in \mathbb{N}$ and $(w_1,\dots,w_m)$ a point in $C^m$.
We will call quantum Hall states with localized quasiholes at $w_1,\dots,w_m$ the sections of $L^{\boxtimes n}$ that satisfy the following properties:

\begin{itemize}
    \item Their restriction to each copy of $C$ gives a holomorphic section of $L$.
    \item They are symmetric (resp. antisymmetric) under exchange of two particles when $b$ is even (resp. odd)
    \item They vanish at order $b \in \mathbb{N}$ when two particles are in the same position.
    \item  They vanish at order $c \in \mathbb{N}$ when one of the particles is at position $w_l$ for some $l$.
\end{itemize}

These axioms define on a Riemann surface the states introduced by Laughlin for the complex plane, and are motivated by the following physics considerations. The first axiom encodes the holomorphicity of the ground state as a holomorphic section of $L$ for any individual particle. With this construction, the restriction of a multiparticle section to a single copy of $C$ is indeed a holomorphic section of $L$.
The second axiom ensures that the particles are indistinguishable, and the third and fourth axioms model the particle-particle and particle-quasihole interaction. In other words the wave-functions behave locally as holomorphic functions having a factor $$\prod_{1\leqslant\mu<\nu\leqslant n}(z_\mu-z_\nu)^b \prod_{\substack{1\leq \mu \leq n\\1\leq \gamma \leq m}} (z_\mu - w_\gamma)^c.$$
The latter two conditions are local in nature and thus are exactly the same as in the original formulation by Laughlin and the first condition takes care of global properties, when passing to a compact surface.

\begin{proposition}
Let $(w_1,\dots,w_m) \in C^m$ and $w$ be the image of this point in $S^m C$. There is a line bundle $L_{b,c,w }\to 
S^n C$ such that $H^0(S^{n}C,L_{b,c,w})$ is in bijection with the vector space of Laughlin states with quasiholes at positions $w_1\dots,w_m$.
\end{proposition}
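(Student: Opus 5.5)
The plan is to build $L_{b,c,w}$ by descending a twisted line bundle from $C^n$ to $S^nC$, absorbing the vanishing conditions into the divisor. On $C^n$ consider the line bundle
\[
\widetilde L_{b,c,w} := L^{\boxtimes n}\otimes \mathcal O_{C^n}\Big(-b\,\Delta_n - c\sum_{\mu=1}^n\sum_{\gamma=1}^m \pi_\mu^{-1}(w_\gamma)\Big),
\]
where $\Delta_n=\sum_{\mu<\nu}\Delta_{\mu\nu}$ is the sum of the pairwise diagonals. Holomorphic sections of $\widetilde L_{b,c,w}$ are exactly the holomorphic sections of $L^{\boxtimes n}$ vanishing to order at least $b$ along each $\Delta_{\mu\nu}$ and to order at least $c$ along each $\{z_\mu=w_\gamma\}$. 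When several $w_\gamma$ coincide, the multiplicities add, which matches the product $\prod_{\mu,\gamma}(z_\mu-w_\gamma)^c$. These are the third and fourth axioms. The first axiom (holomorphic in each variable) follows because a section holomorphic on $C^n$ restricts holomorphically to each factor. Conversely, a section holomorphic separately in each variable is holomorphic by Hartogs.

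Next I handle the symmetry axiom. The divisor $D=b\Delta_n+c\sum_{\mu,\gamma}\pi_\mu^{-1}(w_\gamma)$ is $\mathfrak S_n$-invariant, and $L^{\boxtimes n}$ carries the natural permutation linearization. I twist this linearization by the sign character when $b$ is odd, so that invariant sections of $\widetilde L_{b,c,w}$ are precisely the antisymmetric (respectively symmetric) Laughlin states. To descend to $S^nC$ via the quotient map $q:C^n\to S^nC$, I use Kempf's descent lemma. An equivariant line bundle descends to the quotient by a finite group if and only if every stabilizer acts trivially on the fibres at its fixed points.

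This stabilizer condition is where I expect the main obstacle. At a point with $z_\mu=z_\nu$, the transposition $(\mu\nu)$ acts on the fibre of $\mathcal O(-\Delta_{\mu\nu})$ by $-1$, since the local generator $z_\mu-z_\nu$ is odd. It therefore acts by $(-1)^b$ on $\mathcal O(-b\Delta_n)$, which is exactly cancelled by the sign twist. On $L^{\boxtimes n}$ and on the quasihole divisor the action at such points is trivial, because the fibres are $L_{z}\otimes L_{z}$ with the swap being the identity, and the local equations $z_\mu-w_\gamma$ are permuted among themselves. The general stabilizers are products of symmetric groups generated by transpositions, so checking transpositions suffices.

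A cleaner alternative avoids descent. Write $\mathcal O(-2\Delta_n)=q^*\mathcal O(-\delta)$, where $\delta$ is the branch divisor, and use $q^*(L^{(n)})=L^{\boxtimes n}$, where $L^{(n)}$ is the symmetrized bundle. The quasihole divisor is $q^*$ of the divisor $\{z\in S^nC : z\cap w\neq\emptyset\}$, weighted by $c$. For even $b$ this gives $L_{b,c,w}$ directly. For odd $b$ I would set
\[
L_{b,c,w}=L_{b-1,c,w}\otimes(\text{descent of }\mathcal O(-\Delta_n)\text{ with sign}).
\]
Here antisymmetric sections of $\mathcal O(-\Delta_n)\otimes q^*M$ correspond to sections of $M$, since every antisymmetric section is divisible by the Vandermonde factor, which trivializes $\mathcal O(-\Delta_n)$ with the sign linearization.

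Finally, the bijection is $H^0(S^nC,L_{b,c,w})\cong H^0(C^n,q^*L_{b,c,w})^{\mathfrak S_n}$ with the twisted action. This holds because $q_*\mathcal O_{C^n}^{\mathfrak S_n}=\mathcal O_{S^nC}$, and combined with the description of $\widetilde L_{b,c,w}$ above it gives exactly the Laughlin states with quasiholes at $w$.
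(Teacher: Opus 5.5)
Your proof is correct. It builds the same object as the paper, namely $L^{\boxtimes n}(-b\Delta_n-cW_w)$ on $C^n$ descended to $S^nC$, but it justifies the descent differently.

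The paper only notes that $L^{\boxtimes n}$, $\Delta_n$ and $W_w$ are $\mathfrak S_n$-invariant and concludes that the twisted bundle descends. You instead check Kempf's stabilizer criterion, and that check brings out a point the paper passes over. With the natural linearization, a transposition $(\mu\nu)$ acts by $(-1)^b$ on the fibre over $\{z_\mu=z_\nu\}$, so for odd $b$ the bundle does not descend unless the linearization is twisted by the sign character. That twist is also what makes the invariant sections antisymmetric rather than symmetric, so it is exactly how the second axiom enters. The paper's one-line argument leaves both the choice of linearization and the (anti)symmetry implicit, and your version makes them explicit.

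Your descent-free variant for even $b$, via $q^*\mathcal O(-\delta)=\mathcal O(-2\Delta_n)$ and $q^*L^{(n)}=L^{\boxtimes n}$, is also sound. It is the picture behind the paper's later use of $\tfrac b2 S\Delta_n$ when computing $c_1(\mathcal L_{b,c})$.

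There is one slip in the odd-$b$ part of that variant. Antisymmetric sections of $q^*M\otimes\mathcal O(-\Delta_n)$ correspond to sections of $M\otimes N$, where $N$ is the sign-twisted descent of $\mathcal O(-\Delta_n)$. They do not correspond to sections of $M$. The Vandermonde factor trivializes $\mathcal O(-\Delta_n)$ only locally; globally $\mathcal O(-\Delta_n)$ is nontrivial, for example $\mathcal O(-1,-1)$ on $\pone\times\pone$. What the local divisibility argument actually shows is that $N$ is a line bundle with $q^*N\cong\mathcal O(-\Delta_n)$. That is precisely what your definition $L_{b,c,w}=L_{b-1,c,w}\otimes N$ requires, so this slip does not affect your main Kempf-descent argument.
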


\begin{proof}
Let $\Delta_n$ be the divisor in $C^n$ corresponding to the big diagonal, i.e. 
$$\Delta_n=\cup_{\mu<\nu} \lbrace (z_1,\dots,z_n) \in C^n,  z_\mu=z_\nu \rbrace$$ and let 
$$W_{(w_1,\dots,w_m)}=\cup_{\substack{1\leq \mu \leq n\\1\leq \gamma \leq m}} \lbrace z_\mu=w_\gamma \rbrace$$ be the quasihole divisor on $C^n$. Note that $W_{(w_1,\dots,w_m)}$ is invariant under permutation of the $m$ quasihole positions, so we denote it by $$W_w=W_{(w_1,\dots,w_m)}.$$
To impose the vanishing conditions, we consider $L^{\boxtimes n}(-b\Delta_n - c W_{w})$. The sections of this line bundle correspond to sections of $L^{\boxtimes n}$ that vanish at order $b$ when $z_\mu=z_\nu$ for some $\mu \neq \nu$ and at order $c$ when $z_\mu=w_\gamma$ for some $1\leq \mu \leq n,1\leq \gamma\leq m$. 

Since $L^{\boxtimes n}$, $\Delta_n$ as well as $W_{w}$ are invariant under the action of the permutation group with $n$ elements $\mathfrak{S}_n$, $L^{\boxtimes n}(-b\Delta_n - c W_{w})$ descends to a unique line bundle $L_{b,c,w}$ on $S^n C$. By construction, quantum Hall states with localized quasiholes at $\{w_1,\dots,w_m\}$ are holomorphic sections of $L_{b,c,w}$. 
\end{proof}

These form a family of states parameterized by $S^m C$ which will form a vector bundle. We will construct this vector bundle $V_{b,c}$ as a pushforward of a universal bundle $\mathcal{L}_{b,c}$ above $S^n C \times S^m C$ which we define now.

\begin{proposition}
There exists a unique line bundle (up to isomorphism) $\mathcal{L}_{b,c} \to S^n C \times S^m C$ such that for all $w$ in $S^m C$, 
$$\mathcal{L}_{b,c}|_{S^nC \times \lbrace w \rbrace}\simeq L_{b,c,w}$$
and 
$$\mathcal{L}_{b,c}|_{\lbrace nz_0 \rbrace \times S^mC} \simeq O(-c n Q)$$
where $Q=\{w=w_1+\dots+w_m,z_0\in \{w_1,\dots,w_m\}\}$ and where the last isomorphism is an isomorphism of topological line bundles.

\end{proposition}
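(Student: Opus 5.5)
We construct $\mathcal{L}_{b,c}$ explicitly on the product $C^n\times C^m$ and then descend it. On $C^n\times C^m$ with coordinates $(z_1,\dots,z_n,w_1,\dots,w_m)$, take the divisors
$$\widetilde\Delta_n=\bigcup_{\mu<\nu}\{z_\mu=z_\nu\},\qquad \widetilde W=\bigcup_{\mu,\gamma}\{z_\mu=w_\gamma\},$$
and set
$$\widetilde{\mathcal L}=\mathrm{pr}_1^*L^{\boxtimes n}\bigl(-b\widetilde\Delta_n-c\widetilde W\bigr).$$
Both divisors and $\mathrm{pr}_1^*L^{\boxtimes n}$ are invariant under $\mathfrak S_n\times\mathfrak S_m$, and this group acts on the line bundle compatibly. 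Each divisor is the pullback of a divisor on $S^nC\times S^mC$: the image of $\widetilde W$ is the incidence divisor $\{(z,w):\ z\text{ and }w\text{ share a point}\}$, and the image of $\widetilde\Delta_n$ is half the diagonal divisor of $S^nC$. So, exactly as in the proof of the previous proposition, we define $\mathcal L_{b,c}$ as the descended bundle. Concretely, it is $L^{(n)}\boxtimes\mathcal O$ twisted by $-b$ times the half-diagonal divisor $\delta$ and by $-c$ times the incidence divisor $D\subset S^nC\times S^mC$, where $L^{(n)}$ is the descent of $L^{\boxtimes n}$.

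\textbf{Restriction to a slice $S^nC\times\{w\}$.} For fixed $w$, the restriction of $D$ is the divisor on $S^nC$ whose pullback to $C^n$ is $W_w$, counted with the correct multiplicities when the $w_\gamma$ coincide. Hence $\mathcal L_{b,c}|_{S^nC\times\{w\}}\simeq L_{b,c,w}$ by construction. The one point to check is that the restriction is a genuine Cartier restriction. This holds because $D$ meets each slice properly, being flat over $S^mC$.

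\textbf{Restriction to $\{nz_0\}\times S^mC$.} Here the diagonal part and $L^{(n)}$ restrict to trivial bundles, since they are constant in $w$. The divisor $D$ restricts to the locus of $w$ containing $z_0$, that is $Q$, with multiplicity $n$: locally the equation $\prod_{\mu,\gamma}(z_\mu-w_\gamma)$ becomes $\prod_\gamma(z_0-w_\gamma)^n$. This gives $\mathcal O(-cnQ)$. This identification is most cleanly made topologically, or in $\mathrm{Pic}$ up to the fact that the constant factors are trivial. It is also why the statement asks only for an isomorphism of topological line bundles.

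\textbf{Uniqueness.} Suppose $\mathcal L'$ is another bundle with the same two restriction properties. Then $\mathcal M=\mathcal L_{b,c}\otimes\mathcal L'^{-1}$ is trivial on every fiber $S^nC\times\{w\}$. By the seesaw theorem, since $S^nC$ is a projective variety and $S^mC$ is connected, $\mathcal M\simeq \mathrm{pr}_2^*N$ for some line bundle $N$ on $S^mC$. Restricting $\mathcal M$ to $\{nz_0\}\times S^mC$ shows that $N$ is topologically trivial. So uniqueness holds in the sense intended: uniqueness up to a degree-zero line bundle pulled back from $S^mC$, which is invisible to the Chern character computed later.

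The main obstacle is this uniqueness step. The statement's normalization is only topological, so it pins $N$ down only up to $\mathrm{Pic}^0(S^mC)\cong\mathrm{Jac}(C)$, and genuine holomorphic uniqueness fails when $g>0$. The plan is therefore to state uniqueness up to this ambiguity, or up to topological isomorphism. Either version is what the Grothendieck--Riemann--Roch computation needs, since $\ch$ of the pushforward changes only by the factor $\ch(N)=1$.
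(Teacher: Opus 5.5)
Your proof is correct and follows the paper's own argument: existence by descending $\mathrm{pr}_1^*\bigl(L^{\boxtimes n}(-b\Delta_n)\bigr)\otimes\mathcal{O}(-c\Delta_{n,m})$ from $C^n\times C^m$ along $\mathfrak{S}_n\times\mathfrak{S}_m$, and uniqueness by the seesaw theorem. Your caveat on uniqueness is also right and more careful than the paper, which cites seesaw without addressing that a merely topological normalization on $\{nz_0\}\times S^mC$ fixes $\mathcal{L}_{b,c}$ only up to $\mathrm{pr}_2^*\mathrm{Pic}^0(S^mC)$; moreover your hedge on existence is unnecessary, since the constructed bundle restricts there holomorphically to $\mathcal{O}(-cnQ)$ (the $\mathrm{pr}_1^*$ factor is holomorphically trivial and the incidence divisor restricts to exactly $nQ$), so imposing that normalization holomorphically restores genuine holomorphic uniqueness, as the paper's seesaw citation implicitly assumes.
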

    
\begin{proof}
The bundle is uniquely determined by these two conditions (by the Seesaw theorem, see \cite[\S 2, Corollary 6]{Mumford_1988}), so we only need to prove existence. Let $$\Delta_{n,m}=\cup_{\substack{1\leq \mu \leq n\\1\leq \gamma \leq m}} \lbrace z_\mu = w_\gamma \rbrace \subset C^{n+m}$$ and $\text{pr}_1: C^{n+m} \to  C^n$. Since $\text{pr}_1^\star L^{\boxtimes n}$,$\text{pr}_1^\star \Delta_n$ and $\Delta_{n,m}$ are invariant under the action of $\mathfrak{S}_n \times \mathfrak{S}_m$ where each group acts respectively on the first $n$ variables and last $m$ variables, the line bundle $\text{pr}_1^\star (L^{\boxtimes n}(-b\Delta_n)) \otimes O(-c\Delta_{n,m})$ descends to a line bundle $\mathcal{L}_{b,c}$ on $S^n C\times S^mC$ which satisfies the two conditions.
\end{proof}

\begin{definition}
Let $\pi:S^nC \times S^m C \to S^m C$.  We define $$V_{b,c,d,g,n,m}:=\pi_\star \mathcal{L}_{b,c}.$$ We are going to prove that $V_{b,c,d,g,n,m}$ is indeed a vector bundle. This is a vector bundle of Laughlin states over the quasihole moduli space $S^m C$, which we call for short the Laughlin quasihole bundle.
\end{definition}

\subsection{Cohomology classes in products of symmetric powers and computation of $c_1(\mathcal{L}_{b,c})$}

In this section, we introduce the cohomology classes that we are going to need for the Grothendieck-Riemann-Roch computation. We will need to compute the first Chern class of $\mathcal{L}_{b,c}$, which is an element of the Neron-Severi group of the base space.

We take $n>2g-1$ and $m>2g-1$ and fix some $z_0 \in C$.
For a generic curve $C$, $NS(S^n C)$ is known. Indeed, the map $S^n C \to \pic^nC$ is the projectivization of a vector bundle $E$ fixed up to isomorphism. $NS(S^n C)$  contains two classes: $\theta_n$ the pullback of the theta divisor from $\pic^n(C)$, and $\xi_n$  defined as $c_1(\mathcal{O}_{\mathbb{P}(E)}(1))$. In cohomology, they are linked by a relation: 

\[
        H^\bullet(S^n C,\mathbb{Z})
        \;\cong\;
        H^\bullet(\pic^n C,\mathbb{Z})[\xi_n]
        \Big/
        \bigl(\xi_n^{n-g+1} -\theta_n \,\xi_n^{n-g} + \cdots + (-1)^g \frac{\theta_n^g}{g!} \xi_n^{n-2g+1}\bigr).
\]
see \cite[Ch. 7]{Griffiths_book}.
Note that $\xi_n$ admits another representation in $H^2(S^n C,\mathbb{Z})$ as follows: for any $z_0 \in C$, we have
\begin{equation}
\label{eq:xisecondresentation}
    \xi_n=\lbrace z_1+\dots+z_n \in S^n C, z_0 \in \lbrace z_1,\dots,z_n\rbrace \rbrace
\end{equation}

We write $\theta_m$ and $\xi_m$ the respective classes on $S^mC$.

By pulling back those classes, we obtain four classes in $NS(S^n C \times S^m C)$, which we write the same way by abuse of notation. 
Another way to obtain classes in $NS(S^n C \times S^m C)$ is to pull back the theta divisor $\theta_{n+m}$ of $S^{n+m} C$ through the map $\sigma: S^n C \times S^m C \to S^{n+m} C$. In  doing so, we get a new class 
\eqtag{
    \eta_{n,m}=\sigma^\star \theta_{n+m} -\theta_n- \theta_m
}{eq:pullbacktheta} 
 that is mixed. We will express later this class using pullbacks of a symplectic basis of degree 1 cohomology on $\pic^{n}(C)$, which will show that this class is non-zero. 
 
\begin{remark}
Doing the same procedure for $\xi_{n+m}$ on $S^{n+m} C$ does not give a new class, as $\sigma^\star \xi_{n+m}-\xi_n-\xi_m=0$. This can be proven using the description of Eq. \eqref{eq:xisecondresentation} for $\xi_m$ from which it follows that as sets  $$\sigma^{-1} \left(\lbrace w \in S^{n+m} C, z_0 \in w \rbrace\right)=\left(\lbrace w \in S^{n} C, z_0 \in w \rbrace \times S^m C \right) \cup \left(S^n C \times \lbrace w \in S^{n+m} C, z_0 \in w \rbrace\right)$$ which becomes $$\xi_{n+m}=\xi_n+\xi_m$$ in cohomology (and in Chow for the same choice of $z_0$).

 \end{remark}

\begin{lemma}
\label{lem:expincohomologyofbigdiag}
Let $S\Delta_n$ (resp. $S\Delta_{n,m}$) be the image in $S^n C\times S^mC$ of $\Delta_n$ (resp. $\Delta_{n,m}$) in $C^{n+m}$. Then, in the cohomology ring of $S^n C\times S^m C$, 

\begin{equation}
\label{eq:diagonal}
S\Delta_{n}=2(-\theta_n+(n+g-1) \xi_n). 
\end{equation}

\begin{equation}
    \label{eq:mixedDiagonal}
S\Delta_{n,m}=-\eta_{n,m}+m\xi_n+n\xi_m
\end{equation}

\end{lemma}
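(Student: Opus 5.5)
We will compute everything by pulling back to the product of curves. The quotient map $q:C^n\times C^m\to S^nC\times S^mC$ has degree $n!\,m!$, and $q^\star$ is injective on rational cohomology, with image the $\mathfrak S_n\times\mathfrak S_m$-invariants. So it is enough to identify the classes after pullback. We also need the multiplicity of the pullback of each divisor.

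\textbf{Setup on $C^N$.} We fix a symplectic basis $\alpha_1,\dots,\alpha_g,\beta_1,\dots,\beta_g$ of $H^1(C,\mathbb Z)$ and the point class $[pt]$. For $x\in H^\bullet(C)$ we write $x^{(\mu)}$ for its pullback by the $\mu$-th projection. We use the following standard facts.
\begin{itemize}
\item The pullback of $\xi_N$ to $C^N$ is $\sum_\mu [pt]^{(\mu)}$, by the description \eqref{eq:xisecondresentation}.
\item The pullback of $\theta_N$ is $\sum_{i}\bigl(\sum_\mu\alpha_i^{(\mu)}\bigr)\bigl(\sum_\nu\beta_i^{(\nu)}\bigr)$, up to the sign convention of \cite[Ch.~8]{Griffiths_book}. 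Equivalently it is $g\sum_\mu[pt]^{(\mu)}+\sum_{\mu\neq\nu}\sum_i \alpha_i^{(\mu)}\beta_i^{(\nu)}$.
\item The diagonal $\{z_\mu=z_\nu\}\subset C^N$ has class $[pt]^{(\mu)}+[pt]^{(\nu)}-\sum_i(\alpha_i^{(\mu)}\beta_i^{(\nu)}-\beta_i^{(\mu)}\alpha_i^{(\nu)})$. This is the Künneth decomposition of the diagonal of $C\times C$.
\end{itemize}

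\textbf{Equation \eqref{eq:diagonal}.} The map $q$ is ramified along $\Delta_n$ with ramification index $2$ (a transposition fixes it). Hence $q^\star S\Delta_n=2\sum_{\mu<\nu}[\{z_\mu=z_\nu\}]$. Summing the diagonal classes gives
\[
\sum_{\mu<\nu}\Delta_{\mu\nu}=(n-1)\sum_\mu[pt]^{(\mu)}-\sum_{\mu\neq\nu}\sum_i\alpha_i^{(\mu)}\beta_i^{(\nu)}
\]
after rewriting $-\beta^{(\mu)}\alpha^{(\nu)}=\alpha^{(\nu)}\beta^{(\mu)}$. Comparing with the pullback of $\theta_n$, this equals $(n-1)\xi_n-(\theta_n-g\xi_n)=-\theta_n+(n+g-1)\xi_n$. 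Multiplying by $2$ gives \eqref{eq:diagonal}.

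The main thing to get right here is the factor $2$, i.e. whether $S\Delta_n$ means the reduced image divisor or the pushforward-type class. We take $S\Delta_n$ to be the class whose pullback is $\sum_{\mu\neq\nu}\Delta_{\mu\nu}=2\sum_{\mu<\nu}\Delta_{\mu\nu}$. This is the branch divisor convention, and it is consistent with the stated formula. We also check the sign of $\theta$ against the classical formula $\delta/2=(n+g-1)x-\theta$ of Arbarello–Cornalba–Griffiths–Harris.

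\textbf{Equation \eqref{eq:mixedDiagonal}.} The divisor $\Delta_{n,m}$ is not ramified under $q$, so $q^\star S\Delta_{n,m}=\sum_{\mu,\gamma}\Delta_{\mu\gamma}$, where $\mu$ indexes the $z$'s and $\gamma$ the $w$'s. The point-class parts sum to $m\xi_n+n\xi_m$. The mixed part is $-\sum_{\mu,\gamma}\sum_i(\alpha_i^{(\mu)}\beta_i^{(\gamma)}+\alpha_i^{(\gamma)}\beta_i^{(\mu)})$.

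On the other hand, $\sigma\circ q$ is the quotient map $C^{n+m}\to S^{n+m}C$. So the pullback of $\sigma^\star\theta_{n+m}$ is the product of the sums over all $n+m$ indices. Subtracting the terms with both indices among the $z$'s (this is $\theta_n$) and both among the $w$'s (this is $\theta_m$) leaves exactly
\[
\eta_{n,m}=\sum_i\sum_{\mu,\gamma}\bigl(\alpha_i^{(\mu)}\beta_i^{(\gamma)}+\alpha_i^{(\gamma)}\beta_i^{(\mu)}\bigr).
\]
Hence $S\Delta_{n,m}=-\eta_{n,m}+m\xi_n+n\xi_m$, which is \eqref{eq:mixedDiagonal}.
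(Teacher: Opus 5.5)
Your proof is correct, but it takes a different route from the paper's.

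\textbf{The paper's route.} The paper does not prove \eqref{eq:diagonal}; it quotes it from Arbarello--Cornalba--Griffiths--Harris. It then derives \eqref{eq:mixedDiagonal} formally from it. A local branched-cover multiplicity computation gives $\sigma^\star S\Delta_{n+m}=S\Delta_n+S\Delta_m+2S\Delta_{n,m}$. One then substitutes \eqref{eq:diagonal} for $n+m$, $n$ and $m$, uses $\sigma^\star\xi_{n+m}=\xi_n+\xi_m$ and the definition $\eta_{n,m}=\sigma^\star\theta_{n+m}-\theta_n-\theta_m$, and divides by $2$, citing the absence of $2$-torsion.

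\textbf{Your route.} You work upstairs on $C^n\times C^m$ with three ingredients:
\begin{itemize}
\item the K\"unneth decomposition of the diagonal of $C\times C$;
\item the explicit pullbacks of $\xi$ and $\theta$ along the quotient maps;
\item the ramification behaviour of $q$: index $2$ along $\Delta_n$, unramified along $\Delta_{n,m}$.
\end{itemize}
You then descend using injectivity of $q^\star$ on rational cohomology. In effect you reprove the ACGH formula, and you handle the mixed diagonal by the same computation instead of reducing it to the pure case.

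\textbf{Comparison.} Your approach is self-contained. It makes the paper's multiplicity-$2$ argument for $\sigma$ unnecessary; in fact that identity follows from your computation, since $q^\star\sigma^\star S\Delta_{n+m}=2\sum_{a<b}\Delta_{ab}$ splits into the three groups of indices. It also yields the symplectic-basis expression of $\eta_{n,m}$ (the paper's Lemma \ref{lem:etainsimplecticbasis}) along the way. The paper's approach is shorter and only uses formal properties of $\sigma^\star$. It never has to fix the signs of the K\"unneth components or of $\theta$ in terms of $\alpha_i,\beta_i$, which your computation must keep consistent, namely $\alpha_i\beta_i=[pt]$ on $C$.

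\textbf{Two small remarks.}
\begin{itemize}
\item Your hedging about ``the factor $2$'' is unnecessary. You already showed that the reduced image of $\Delta_n$ pulls back to $2\sum_{\mu<\nu}\Delta_{\mu\nu}$ because $q$ is ramified there. So the paper's ``image'' is exactly the class you use; this is not a convention choice.
\item Since you descend through $H^\bullet(\cdot,\mathbb{Q})$, stating the identities integrally needs torsion-freeness of $H^2(S^nC\times S^mC,\mathbb{Z})$, which holds by Macdonald. This is the analogue of the paper's ``no $2$-torsion'' step.
\end{itemize}
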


\begin{proof} 
For a proof of Eq. \eqref{eq:diagonal} see \cite{Griffiths_book}. Eq. \eqref{eq:mixedDiagonal} was proven in \cite{Aldonza_Dupont_2025} and is easily obtained from \eqref{eq:diagonal} with a small transversal multiplicity computation, for the convenience of the reader, we recall the proof.
    
Let $\sigma: S^{n}C \times S^{m}C\rightarrow S^{n+m}C$ the forgetful map, as before. As sets, $\sigma^{-1} \left( S\Delta_{n+m}\right)=S\Delta_n \cup S\Delta_m \cup S\Delta_{n,m}$ but as classes, 
    
    \begin{equation}
    \label{eq:transversal_mult}
    \sigma^\star S\Delta_{n+m}=S\Delta_n+S\Delta_m+2S\Delta_{n,m}
    \end{equation}
   
Indeed, let $(\{z_1, z_2, \dots\},\{z_1, w_2, \dots\})$ be a generic point in $S\Delta_{n,m}$, with  a small neighborhood $U$. $\sigma:U\to \sigma(U) $ is a branched covering of degree two: a perturbation along $S\Delta_{n+m}$ keeps the number of antecedents to one, but a perturbation $\{z_1, z_2, \dots,z_{n},z_1,w_2,\dots,w_{m}\} \to \{z_1, z_2, \dots,z_{n},z_1',w_2,\dots,w_{m}\}$ outside of $S\Delta_{n+m}$ makes the number of antecedents jump to two: $(\{z_1, z_2, \dots\},\{z_1', w_2, \dots\})$ and $(\{z_1', z_2, \dots\},\{z_1, w_2, \dots\})$.

   Writing \eqref{eq:transversal_mult} in cohomology in terms of the classes we introduced reads 
   \begin{align*}
   2 S\Delta_{n,m}= &\sigma^\star ( 2(-\theta_{n+m}+(n+m+g-1) \xi_{n+m}))\\& - 2(-\theta_n+(n+g-1) \xi_n) - 2(-\theta_m+(m+g-1) \xi_m)
   \end{align*}
   Since there is no two-torsion in $NS(S^n C \times S^m C)$, we can divide by two on both sides. 
   The result follows after using the expression \eqref{eq:pullbacktheta} to get 
   \eq{
    \sigma^\star ((-\theta_{n+m}+(n+m+g-1) \xi_{n+m}))=-\theta_n-\theta_m-\eta_{n,m}+(n+m+g-1)(\xi_n+\xi_m).
   }

\end{proof}

\begin{proposition}
\label{prop:firstcherclassunivquasiholebundle}
    $$c_1(\mathcal{L}_{b,c})=b\theta_n + c \eta_{n,m} + p \xi_n  -cn\xi_m$$ where $p=d-bn-cm-b(g-1)$.
\end{proposition}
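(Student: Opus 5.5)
The plan is to compute $c_1(\mathcal{L}_{b,c})$ directly from the construction: $\mathcal{L}_{b,c}$ is the descent of $\text{pr}_1^\star(L^{\boxtimes n}(-b\Delta_n))\otimes O(-c\Delta_{n,m})$ from $C^{n+m}$ to $S^nC\times S^mC$. I would write its class as a sum of three pieces and express each in terms of $\theta_n,\xi_n,\theta_m,\xi_m,\eta_{n,m}$.

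\textbf{The $L^{\boxtimes n}$ piece.} The line bundle $L^{\boxtimes n}$ descends to a line bundle $L^{(n)}$ on $S^nC$. Its class is $d\,\xi_n$, since it only depends on $\deg L=d$ topologically, and for $L=O(z_0)$ the descended bundle is $O(\xi_n)$ by Eq.~\eqref{eq:xisecondresentation}.

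\textbf{The two diagonal pieces.} The key subtlety is multiplicity under descent. On $S^nC$ the quotient map $C^n\to S^nC$ is ramified along $\Delta_n$. A $\mathfrak{S}_n$-invariant section vanishing to order $b$ along $\Delta_n$ upstairs vanishes to order $b/2$ along the branch divisor $S\Delta_n$ downstairs. Indeed, locally $(z_1-z_2)^2$ is the coordinate downstairs, and for $b$ odd the antisymmetry sign is absorbed in the descent data. So the twist by $-b\Delta_n$ descends to $-\frac{b}{2}S\Delta_n$ in cohomology. Using Eq.~\eqref{eq:diagonal} this contributes
\[
-\tfrac b2\cdot 2(-\theta_n+(n+g-1)\xi_n)=b\theta_n-b(n+g-1)\xi_n .
\]
By contrast, $\Delta_{n,m}$ is not a ramification locus of $C^{n+m}\to S^nC\times S^mC$, because $\mathfrak{S}_n\times\mathfrak{S}_m$ does not swap a $z$ with a $w$. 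Hence $-c\Delta_{n,m}$ descends to $-c\,S\Delta_{n,m}$. By Eq.~\eqref{eq:mixedDiagonal} this contributes
\[
c\,\eta_{n,m}-cm\,\xi_n-cn\,\xi_m .
\]

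\textbf{Summing and checking.} Adding the three pieces gives
\[
b\theta_n+c\eta_{n,m}+\bigl(d-bn-b(g-1)-cm\bigr)\xi_n-cn\xi_m ,
\]
which is the claimed formula with $p=d-bn-cm-b(g-1)$.

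As a consistency check, restricting to $\{nz_0\}\times S^mC$ kills $\theta_n$, $\xi_n$ and $\eta_{n,m}$. The first two die because they are pulled back from $S^nC$. The third dies because the symplectic-basis expression of $\eta_{n,m}$ is a sum of products of degree-one classes from both factors. What remains is $-cn\xi_m$, matching $O(-cnQ)$ since $Q$ represents $\xi_m$.

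\textbf{Main obstacle.} The hard step is justifying the factor $\frac12$ for $\Delta_n$ rigorously, i.e.\ the identity $\pi^\star S\Delta_n=2\Delta_n$ under the quotient map and the fact that the descended bundle's class is $\frac1{|G|}$-compatible in rational cohomology. I would do this by the local computation at a generic point of $\Delta_n$, as in the proof of Lemma~\ref{lem:expincohomologyofbigdiag}. I would then invoke the absence of torsion in $NS$ to pass from the pulled-back equality to the equality downstairs. The case of odd $b$ needs care with the sign character, but it only affects the linearization, not the Chern class.
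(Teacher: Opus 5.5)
Your proposal is correct and follows the paper's proof. You descend $L^{\boxtimes n}$ to obtain $d\xi_n$, write the diagonal twist as $\tfrac{b}{2}S\Delta_n$ because of ramification along $\Delta_n$, and write the mixed twist as $c\,S\Delta_{n,m}$ because the quotient is unramified there. You then substitute Lemma~\ref{lem:expincohomologyofbigdiag}, exactly as the paper does. Your extra justifications are sound refinements of steps the paper only asserts: the factor $\tfrac12$ via the pullback identity and torsion-freeness, the remark that for odd $b$ the sign twist changes only the linearization, and the restriction to $\{nz_0\}\times S^mC$.
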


\begin{proof}
Write $D$ a divisor on $S^nC \times S^m C$ such that its pullback to $C^{n+m}$ is $b\Delta_n+c\Delta_{n,m}$. In cohomology, $D=b \frac{1}{2}S\Delta_n+c S\Delta_{n,m}$. The descent of $L^{\boxtimes n}$ has first Chern class $d\xi_n$. The result is then obtained with
\eq{c_1(\mathcal{L}_{b,c})&=d\xi_n-c_1(D)\\
    &=d\xi_n - \frac{1}{2}b c_1(S\Delta_n)-c c_1(S\Delta_{n,m})
    } by using the lemma \ref{lem:expincohomologyofbigdiag}.
\end{proof}

\subsection{Applying the Grothendieck-Riemann-Roch theorem}
\label{subsec:applyinggrr}

\subsubsection{The case with no non-localized quasiholes}

Let
$$\pi: S^{n} C \times S^m C  \to  S^m C$$
and the class we are interested in obtaining is $\ch(R^0p_*\mathcal{L}_{b,c})$. As stated in the outline of the paper, we apply the Grothendieck–Riemann–Roch theorem to the proper map $\pi$ and the line bundle $\mathcal{L}_{b,c}$. We get:

\eqtag{\ch\left(\sum_i (-1)^i R^i \pi_\star \mathcal{L}_{b,c}\right)=\pi_\star \left(e^{c_1(\mathcal{L}_{b,c})} \td\left(S^{n}C\right)\right).}{eq:GRR}

\begin{theorem}
    For any $w \in S^m C$, and for any $i>0$, $H^i(S^n C,\mathcal{L}_{b,c,w})=0$.
\end{theorem}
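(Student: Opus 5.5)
We will prove the vanishing by Kodaira vanishing on $S^nC$, writing $L_{b,c,w}=K_{S^nC}\otimes M$ with $M$ ample. For an ample $M$ on a smooth projective variety, $H^i(S^nC, K_{S^nC}\otimes M)=0$ for $i>0$. Restricting Proposition \ref{prop:firstcherclassunivquasiholebundle} to a slice $S^nC\times\{w\}$ kills every pullback from $S^mC$, and $\eta_{n,m}$ restricts to zero because it is a mixed class. So
$$c_1(L_{b,c,w})=b\theta_n+p\,\xi_n .$$
The statement concerns the case where $p\geq 0$, or at least is to be read with the paper's hypotheses. For the canonical class of $S^nC$ we use the classical formula from \cite[Ch.~7]{Griffiths_book}:
$$K_{S^nC}=\theta_n-(n-g+1)\xi_n ,$$
or equivalently $K_{S^nC}=(g-1)\xi_n-\tfrac12 S\Delta_n$. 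This follows from Lemma \ref{lem:expincohomologyofbigdiag} together with the ramification of $C^n\to S^nC$ along the diagonal and $K_{C^n}=\boxtimes K_C$ of degree $2g-2$.

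Subtracting gives
$$c_1(L_{b,c,w})-K_{S^nC}=(b-1)\theta_n+(p+n-g+1)\xi_n .$$
The main step is to show that this class is ample. We will use two known facts about $S^nC$. First, $\xi_n$ is ample; it is the class of the divisor $z_0+S^{n-1}C$, which is ample on $S^nC$. Second, $\theta_n$ is nef, being the pullback of an ample class under $S^nC\to\pic^n C$. Since $b\geq 1$ and $p\geq 0$, the coefficient of $\xi_n$ is
$$p+n-g+1>0,$$
using the standing assumption $n>2g-1$. Thus the class is a positive multiple of an ample class plus a nef class, and is therefore ample.

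To pass from cohomology classes to actual line bundles, we note that $L_{b,c,w}\otimes K_{S^nC}^{-1}$ has first Chern class as above. Ampleness depends only on the numerical class, by Kleiman's criterion, so the argument goes through even though the identification was made only in $NS$. Kodaira vanishing then gives $H^i(S^nC,L_{b,c,w})=0$ for all $i>0$.

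The expected obstacle is the bookkeeping of the canonical class and the signs of the classes. In particular we must check that $\xi_n$ really is ample and not merely nef; its definition as $c_1(\mathcal O_{\mathbb P(E)}(1))$ makes it relatively ample over $\pic^n C$. If this direct argument fails, we would instead use that $\xi_n+\epsilon\theta_n$ is ample and absorb the $\theta_n$ term, which is possible because $b-1\geq 0$ leaves room. The case $b=1$ needs separate care only if the $\xi_n$ coefficient fails to be positive, and it does not fail in our range. If $p<0$ the statement is not needed, since then $V=0$ is claimed. In that case we would restrict the theorem to $p\geq0$, or argue via Serre duality that all cohomology vanishes.
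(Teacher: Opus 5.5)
Your argument is correct and is essentially the paper's proof. Both apply Kodaira vanishing to $L_{b,c,w}\otimes K_{S^nC}^{-1}$, whose class is $(b-1)\theta_n+(p+n-g+1)\xi_n$. The only difference is that the paper cites Klevtsov--Zvonkine for ampleness, whereas you derive it from $\theta_n$ nef plus $\xi_n$ ample. The ampleness of $\xi_n$ holds because its pullback $\sum_\mu \pi_\mu^\star[z_0]$ under the finite surjection $C^n\to S^nC$ is ample, so your $\epsilon$-fallback is unnecessary.

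Your proof only uses $p+n-g+1>0$, so it already covers the paper's implicit range $p>g-n-1$, including the negative values of $p$ used to conclude $\ch(V)=0$. You should therefore drop the Serre-duality remark: the vanishing can genuinely fail outside this range, e.g.\ for $g=0$ and $\mathcal O(p)$ on $\mathbb P^n$ with $p\leq -n-1$. Also fix the slip in your alternative formula, which should read $K_{S^nC}=(2g-2)\xi_n-\tfrac12 S\Delta_n$.
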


\begin{proof}
Fix $w \in S^m C$, then $$c_1(\mathcal{L}_{b,c,w})=c_1(\mathcal{L}_{b,c}|_{\lbrace w\rbrace})=c_1(\mathcal{L}_{b,c})|_{{\lbrace w\rbrace}}=b\theta_n+p\xi_n.$$
Let $\omega_{S^n C}$ be the canonical line bundle of $S^n C$. Then $$c_1(\mathcal{L}_{b,c,w} \otimes \omega_{S^n C}^{-1})=(b-1)\theta_n+(p+(n+1-g))\xi_n.$$ which has been shown to be ample when $p>-n-1+g$ and $b\geqslant 1$ in \cite{Klevtsov_Zvonkine_2025}.

\begin{corollary}
    By Grauert theorem \cite[Corollary 12.9]{Hartshorne_1977}, all higher direct images vanish and $$V_{b,c,d,g,n,m}:=\sum_i (-1)^iR^i \pi_\star \mathcal{L}_{b,c} = R^0 \pi_\star \mathcal{L}_{b,c} $$ is a vector bundle.
\end{corollary}

\end{proof}

What remains is to compute the pushforward $\pi_*$ in cohomology. We note that the map $\pi$ factors as follows:
\[
\begin{tikzcd}
S^n C \times S^m C \arrow[r, "\pi_1"] \arrow[dr, "\pi"'] & \operatorname{Pic}^n C \times S^m C \arrow[d, "\pi_2"] \\
& S^m C
\end{tikzcd}.
\]
Hence, our strategy will be to first compute the pushforward ${\pi_1}_*$, which corresponds to the integration over the fibers of $S^nC\to \pic^n C$, and then the pushforward ${\pi_2}_*$, which corresponds to the integration over $\pic^n C$. 

In order to carry out the first integration, we will need the following lemma. 

\begin{lemma}
\label{lem:firstpushforward_lemma1}
Let $n\geq 2g-1$, and $p$ be an integer. Let $\pi_1':S^nC \to \pic^n C$.
$${\pi_1}'_\star \left( e^{p \xi_n}\td(\xi_n)^{n+1-g}  e^{\theta_n \frac{\td \xi_n -\xi_n-1}{\xi_n}} \right)=f(\theta_n),$$
where $f$ is the polynomial defined as $$f(x)= \sum_{a\geqslant 0} \frac{1}{a!} \binom{n-g+p}{p-a} x^a.$$ We recall the convention that $\binom{n}{p}$ is zero whenever $p<0$ or $p>n$.

We refer to \cite[Lemma 4.2.2]{Aldonza_Dupont_2025}, \cite[Lemma 4.10]{Klevtsov_Zvonkine_2025} for the complete computation. This result follows from the fact that the pushforward by $\pi_1$ corresponds to integration along the projective fibers over $\mathrm{Pic}^n C$. The class $\theta_n$ is unchanged, while
\[
{\pi_1}_*\,\xi_n^{n-k}=\frac{\theta_n^k}{k!}.
\]

This formula, called the Poincaré formula (see \cite[\S 5]{Griffiths_book}) is obtained by noticing that fiber integration of the hyperplane class precisely selects the coefficient of $\xi_n^{n-g}$, as well as by using the relation
\[
\xi_n^{n-g+1}-\theta_n \,\xi_n^{n-g}+\cdots+(-1)^g\frac{\theta_n^g}{g!}\,\xi_n^{n-2g+1}=0.
\]

\end{lemma}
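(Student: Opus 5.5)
The plan is to turn the fibre integral along $\pi_1':S^nC\to\pic^nC$ into a one-variable residue, and then evaluate that residue by a change of variables. For $n\geq 2g-1$, the map $\pi_1'$ is the projectivisation of a vector bundle $E$ of rank $r+1$, where $r=n-g$. Its Segre classes are encoded by the Poincaré formula quoted in the statement: in fibre degree $j$ one has ${\pi_1'}_\star\xi_n^{\,r+j}=\theta_n^j/j!$.

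\textbf{Step 1: residue form.} By linearity, for any power series $F$ with coefficients in $H^\bullet(\pic^nC)$,
\[
{\pi_1'}_\star F(\xi_n)=\operatorname{Res}_{x=0}\, F(x)\, x^{-(r+1)}\, e^{\theta_n/x}\,dx .
\]
To check this on monomials, note that the coefficient of $x^{-1}$ in $x^{j-1}e^{\theta_n/x}$ is $\theta_n^j/j!$. This identity only encodes the Poincaré formula together with the relation $\xi_n^{n-g+1}-\theta_n\xi_n^{n-g}+\cdots=0$, so no further geometry is needed. Everything is nilpotent in $\theta_n$, so all sums are finite.

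\textbf{Step 2: simplify the integrand.} Take
\[
F(x)=e^{px}\,\td(x)^{r+1}\,e^{\theta_n(\td x-x-1)/x},\qquad \td(x)=\frac{x}{1-e^{-x}} .
\]
The factor $x^{r+1}$ coming from $\td(x)^{r+1}$ cancels $x^{-(r+1)}$. The exponential terms combine into $e^{\theta_n\phi(x)}$ with
\[
\phi(x)=\frac{\td x-x-1}{x}+\frac1x=\frac{1}{1-e^{-x}}-1=\frac{1}{e^x-1}.
\]
The integrand is therefore $e^{px}(1-e^{-x})^{-(r+1)}e^{\theta_n/(e^x-1)}\,dx$.

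\textbf{Step 3: change of variables.} Substitute $y=e^x-1$, which is a local biholomorphism at $0$, so residues are preserved. Then $dx=dy/(1+y)$, $e^{px}=(1+y)^p$, and $1-e^{-x}=y/(1+y)$. The integrand becomes $(1+y)^{n-g+p}\,y^{-(n-g+1)}\,e^{\theta_n/y}\,dy$.

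Expand $e^{\theta_n/y}=\sum_a\theta_n^a y^{-a}/a!$. The residue then picks, for each $a$, the coefficient of $y^{n-g-a}$ in $(1+y)^{n-g+p}$, which is a binomial coefficient $\binom{n-g+p}{\,\cdot\,}$. The binomial-coefficient convention handles the boundary cases automatically, whenever the index falls outside $[0,n-g+p]$. This yields a polynomial $\sum_a \frac{1}{a!}\binom{n-g+p}{\,\cdot\,}\theta_n^a$, which should be $f(\theta_n)$.

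\textbf{Main obstacle.} The delicate point is bookkeeping of the index in the binomial. By symmetry, $\binom{N}{n-g-a}=\binom{N}{p+a}$ with $N=n-g+p$, while the statement reads $\binom{n-g+p}{p-a}$. The difference hinges on the exact sign conventions in $\td(S^nC)$: whether the $\theta_n$-term of the Todd class enters as $\theta_n(\td\xi_n-\xi_n-1)/\xi_n$ or with the opposite sign. It also hinges on the normalisation of the Poincaré formula, namely whether ${\pi_1'}_\star\xi_n^{\,r+j}=\theta_n^j/j!$ or $(-\theta_n)^j/j!$ after the relation defining $H^\bullet(S^nC)$. A sign flip $\theta_n\mapsto-\theta_n$ in either place changes $y$ to the variable $u=1-e^{-x}$ and produces $\binom{n-g+p}{p-a}$. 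I would pin these conventions down first against \cite[Lemma 4.10]{Klevtsov_Zvonkine_2025}. As a consistency check I would use the case $\theta_n=0$, which must give the Euler characteristic $\binom{n-g+p}{p}$ on a fibre $\mathbb P^{n-g}$, and then run Steps 1–3 with the matching sign.
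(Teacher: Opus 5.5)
Your Steps 1--3 are correct, and they already prove the lemma exactly as stated. The ``main obstacle'' is an index slip at the end of Step 3, not a convention issue.

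The $a$-th term of the integrand is $\frac{\theta_n^a}{a!}(1+y)^{N}y^{-(n-g+1)-a}\,dy$ with $N=n-g+p$. Its residue is therefore the coefficient of $y^{n-g+a}$ in $(1+y)^N$, not of $y^{n-g-a}$. That coefficient is
\[
\binom{N}{n-g+a}=\binom{N}{N-(n-g+a)}=\binom{n-g+p}{p-a},
\]
which is precisely $f(\theta_n)$.

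The conventions in the statement are mutually consistent:
\begin{itemize}
\item The displayed relation reads $\sum_k c_k\,\xi_n^{n-g+1-k}=0$ with total Chern class $\sum_k c_k=e^{-\theta_n}$. Hence ${\pi_1'}_\star\xi_n^{n-g+j}$ are the coefficients of the Segre series $e^{\theta_n}$, which is exactly your Step 1 kernel.
\item The same bundle, twisted by $\mathcal{O}(1)$ in the relative Euler sequence (with $T_{\pic^nC}$ trivial), gives exactly $\td(\xi_n)^{n-g+1}e^{\theta_n(\td\xi_n-\xi_n-1)/\xi_n}$, since $\frac{\td x-x-1}{x}=\frac{1}{e^x-1}-\frac1x=-(\log\td)'(x)$.
\end{itemize}
So your last paragraph should be deleted. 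No change of convention is needed, and the proposed flip $\theta_n\mapsto-\theta_n$ would make the answer wrong. Your test at $\theta_n=0$ cannot catch the slip, because only $a=0$ survives there. Two tests that do discriminate:
\begin{itemize}
\item $p=0$: Remark~\ref{rem2} forces the answer $1$.
\item $g=n=1$: $\pi_1'$ is an isomorphism and $\xi_1=\theta_1$, so the left-hand side is directly $1+p\,\theta_1$.
\end{itemize}

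One caveat is real. The symmetry step, and your claim that the binomial convention handles the boundary cases, both require $N=n-g+p\geq 0$. For $N<0$, $(1+y)^N$ is a genuine power series, and your residue correctly returns the generalized binomials $\binom{N}{n-g+a}$. The identity as stated, with the zero convention, then fails: for $g=n=1$, $p=-1$ the left-hand side is $1-\theta_1\neq 0$. This is harmless for the paper, which needs the lemma only for $p>g-n-1$, the range where Kodaira vanishing is available.

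With these fixes your argument is a complete, self-contained proof, which the paper does not give. The paper cites the references and sketches a termwise evaluation via the Poincaré formula. Its display ${\pi_1}_*\xi_n^{n-k}=\theta_n^k/k!$ should read $\theta_n^{g-k}/(g-k)!$; your ${\pi_1'}_\star\xi_n^{n-g+j}=\theta_n^j/j!$ is the correct form. Your residue kernel encodes both the Poincaré formula and the tautological relation at once. The substitution $y=e^x-1$ then performs the whole coefficient extraction in one step, which is a genuinely cleaner route than the termwise expansion.
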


\begin{remark}\label{rem2}
Note that in the particular case $p=0$ we get ${\pi_1}'_\star \left(\td(S^nC)\right)=1$. This can also be seen as a consequence of the Grothendieck Riemann-Roch theorem. Applying the Grothendieck Riemann-Roch theorem to $\pi_1'$ and to the sheaf $\mathcal{O}{S^n C}$ gives
$$\ch\left(\sum_i R^i {\pi_1}'_\star \mathcal{O}_{S^{n}C}\right) \td\left(\pic^{n} C\right)={\pi_1}'_\star \left( \ch\left(\mathcal{O}_{S^{n}C}\right)\td\left( S^{n} C \right)\right)$$

But $\ch\left( \sum_i R^i {\pi_1}'_\star \mathcal{O}_{S^n C} \right) = \ch\left(R^0 {\pi_1}'_\star \mathcal{O}_{S^n C}\right)=\ch\left(\mathcal{O}_{\pic^{n} C}\right)=1$ and $\td\left(\pic^{n} C\right)=1$ as $\pic^{n} C$ is a complex torus. All together, it implies 

$${\pi_1}'_\star \left(\td\left(S^n C \right)\right)=1.$$

\end{remark}

\begin{remark}
Since $\pi_1$ does not act on the factor $S^m C$, the formula of lemma \ref{lem:firstpushforward_lemma1} is also true for $\pi_1:S^nC \times S^m C \to \pic^n C \times S^m C$ (up to pullbacks via projection maps):
$${\pi_1}_\star \left( \text{pr}_1^\star \left(e^{p \xi_n}\td(\xi_n)^{n+1-g}  e^{\theta_n \frac{\td \xi_n -\xi_n-1}{\xi_n}} \right)\right)={\text{pr}_1'}^\star f(\theta)$$
\end{remark}

\begin{proof}
This is a consequence of the base change formula in the Cartesian square 
\[
\begin{tikzcd}
S^n C \times S^m C \arrow[r, "\pi_1"] \arrow[d, "\mathrm{pr}_1"'] & \operatorname{Pic}^n C \times S^m C \arrow[d, "\mathrm{pr}'_1"] \\
S^n C \arrow[r, "\pi_1'"] & \operatorname{Pic}^n C
\end{tikzcd}
\]
which gives ${\pi_1}_\star \left( \text{pr}_1^\star \left(e^{p \xi_n}\td(\xi_n)^{n+1-g}  e^{\theta_n \frac{\td \xi_n -\xi_n-1}{\xi_n}} \right)\right)={\text{pr}_1'}^\star {\pi_1'}_\star \left(e^{p \xi_n}\td(\xi_n)^{n+1-g}  e^{\theta \frac{\td \xi_n -\xi_n-1}{\xi_n}} \right)$

\end{proof}

In order to compute the pushforward by $\pi_2$, we will first express the classes on $\pic^{n} C \times \pic^{m}C$ that have appeared so far in terms of a basis of degree $1$ forms on $\pic^n(C)$ and $\pic^m(C)$.

Let $a^1,\dots,a^g, b^1,\cdots,b^g$ be a basis of canonical cycles of $H_1(C,\mathbb{Z})$ and denote by $\alpha^1,\cdots,\alpha^g$, $\beta^1, \cdots,\beta^g$ the dual basis of real harmonic 1-forms.  Our choice of $z_0$ gives an Abel-Jacobi map $C \longrightarrow \pic^{n}(C)$. We denote by $\lbrace \alpha_n^r,\beta_{n}^r,  1\leq r\leq g \rbrace$ the symplectic basis of $H^1(\pic^{n}(C),\mathbb{Z})$ that pulls back to $\{\alpha^r,\beta^r, 1\leq r \leq g\}$ by the corresponding Abel-Jacobi map. 

We denote by $\lbrace \alpha_m^r,\beta_{m}^r,  1\leq r\leq g \rbrace$ the basis obtained by the same construction for $S^m C$.

\begin{lemma}
\label{lem:etainsimplecticbasis}
In terms of these symplectic bases, we have:
    
\eqtag{
\theta_n=\sum_{r=1}^g \alpha_n^r \wedge \beta_n^r, \qquad \theta_m=\sum_{r=1}^g \alpha_m^r\wedge \beta_m^r}{eq:ThetaAsDifferentialForm}
and
\begin{equation}
    \eta_{n,m}=\sum_{r=1}^g (\alpha_n^r \wedge\beta_m^r +\alpha_m^r \wedge\beta_n^r)
\end{equation}

\end{lemma}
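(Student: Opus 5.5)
The plan is to work on the Jacobians: every class in the lemma is pulled back from $\pic^n C\times\pic^m C$ (for $\eta_{n,m}$ this uses the addition map $\pic^n C\times\pic^m C\to\pic^{n+m}C$), so it suffices to compute there.

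First, the formula for $\theta_n$ (and likewise $\theta_m$) is the classical description of the theta divisor class on a principally polarized abelian variety. With the symplectic basis normalized by the Abel–Jacobi pullback, $[\Theta]$ is the polarization form $\sum_r \alpha_n^r\wedge\beta_n^r$. This can be checked by pulling back along the Abel–Jacobi map $C^{g-1}\to\pic^{n}C$ and matching against Riemann's theorem. I will quote it from \cite[Ch. 2]{Griffiths_book}.

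Second, for $\eta_{n,m}$ I use the commutative square formed by $\sigma:S^nC\times S^mC\to S^{n+m}C$, the Abel–Jacobi maps, and the addition map
$$\mu:\pic^nC\times\pic^mC\to\pic^{n+m}C.$$
The Abel–Jacobi maps are based at $nz_0$, $mz_0$ and $(n+m)z_0$, so the square commutes. Hence $\sigma^\star\theta_{n+m}$ is the pullback of $\mu^\star\theta_{n+m}$, and it remains to compute $\mu^\star$ on $H^1$. Since $\mu$ is a group homomorphism up to translation, $\mu^\star\alpha_{n+m}^r=\alpha_n^r+\alpha_m^r$ and $\mu^\star\beta_{n+m}^r=\beta_n^r+\beta_m^r$. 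To justify this, restrict to the curve $C\times\{mz_0\}$ embedded via Abel–Jacobi: there the composite is the Abel–Jacobi map of $C$ into $\pic^{n+m}C$, which pulls $\alpha^r_{n+m}$ back to $\alpha^r$. The same holds on the other factor. Since $H^1$ of a product of tori is the direct sum, the formula follows.

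Third, expanding gives
$$\mu^\star\theta_{n+m}=\sum_r(\alpha_n^r+\alpha_m^r)\wedge(\beta_n^r+\beta_m^r)=\theta_n+\theta_m+\sum_r\bigl(\alpha_n^r\wedge\beta_m^r+\alpha_m^r\wedge\beta_n^r\bigr).$$
Subtracting $\theta_n+\theta_m$ as in \eqref{eq:pullbacktheta} yields the stated expression for $\eta_{n,m}$.

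The main obstacle I expect is the normalization and sign conventions in the first step. The theta class must equal $\sum\alpha^r\wedge\beta^r$ with exactly the sign and ordering fixed by the chosen symplectic basis, and the $H^1$ identification must be compatible across different degrees $n$, $m$, $n+m$. I would handle this by fixing the orientation convention once, via $\int_C\alpha^r\wedge\beta^s=\delta^{rs}$, and checking both against Poincaré's formula $\int_{\pic}\theta^g/g!=1$.
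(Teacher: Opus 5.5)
Your proposal is correct and follows essentially the same route as the paper: you quote the classical identification of the theta class with the polarization form $\sum_r\alpha^r\wedge\beta^r$, use that the addition map pulls $\alpha^r_{n+m},\beta^r_{n+m}$ back to $\alpha^r_n+\alpha^r_m,\ \beta^r_n+\beta^r_m$, and expand. Your restriction to the curve $C\times\{mz_0\}$ (and its mirror) also justifies that pullback identity more precisely than the paper, which only asserts that $\{\alpha_n^r+\alpha_m^r,\beta_n^r+\beta_m^r\}$ ``forms a symplectic basis of $\pic^{n+m}C$''.
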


\begin{proof}
The expression of $\theta_n$ and $\theta_m$ follows from the fact that, in cohomology, the polarization given by the theta divisor corresponds to the intersection pairing through the identification $H^1(J(C),\mathbb{Z})\simeq H^1(C,\mathbb{Z})$. Since $\lbrace \alpha_n^r+\alpha_m^r, \beta_n^r+\beta_m^r, 1\leq r\leq g \rbrace$ forms a symplectic basis of $\pic^{n+m}C$, we have with $\sigma$ the addition map:
    
\eq{
\eta_{n,m}&=\sigma^\star \theta_{n+m}-\theta_n-\theta_m\\
    &=\sum_{r=1}^g(\alpha_n^r+\alpha_m^r)\wedge(\beta_n^r+\beta_m^r)-\sum_{r=1}^g \alpha_n^r \wedge \beta_n^r  - \sum_{r=1}^g \alpha_m^r\wedge \beta_m^r\\
    &=\sum_{r=1}^g (\alpha_n^r \wedge\beta_m^r +\alpha_m^r \wedge\beta_n^r) }
    
    \end{proof}

\mainqhff*

 \begin{proof}
Since $p=d-bn-cm-b(g-1)=0$, $\ch(\mathcal{L}_{b,c})=e^{c_1(\mathcal{L}_{b,c})}$ is a pullback from $\pic^n C$ so we have 

\begin{align*}
    {\pi_1}_\star \left( e^{c_1(\mathcal{L}_{b,c})}\td(S^n C)\right)&=e^{c_1(\mathcal{L}_{b,c})} {\pi_1}_\star \left( \td(S^n C)\right)\\
    &=e^{c_1(\mathcal{L}_{b,c})}\\
    &=e^{b\theta_n + c \eta_{n,m}-cn\xi_m}
\end{align*}
where we used ${\pi_1}_\star \left( \td(S^n C)\right)=1$ by Remark \ref{rem2}, as well as the expression of $c_1(\mathcal{L}_{b,c})$ of proposition \ref{prop:firstcherclassunivquasiholebundle}. In terms of the symplectic basis $\alpha_n^i$ and $\beta_n^i$ (resp. $\alpha_m^i$ and $\beta_m^i$), we have:

\eq{
e^{b\theta_n + c \eta_{n,m}-cn\xi_m}&=e^{b\sum_{l=1}^g  \alpha_n^l \beta_n^l + c \sum_{l=1}^g \alpha_n^l \beta_m^l + c \sum_{l=1}^g \alpha_m^l \beta_n^l} e^{-cn\xi_m}\\
&=\prod_{l=1}^g e^{b \alpha_n^l \beta_n^l + c \alpha_n^l \beta_m^l + c \alpha_m^l \beta_n^l} e^{-cn\xi_m}
}

The second pushforward ${\pi_2}_\star$ corresponds to the integration along the fibers of the map $\pi_2$, that is to integration on $\pic^n C$. It selects only top terms containing $\prod_{l=1}^g \alpha_n^l \beta_n^l$, which is a volume form on $\pic^n C$, and outputs the coefficient in front of it. It does not act on $\xi_m$. Note that we can only obtain $\prod_{l=1}^g \alpha_n^l \beta_n^l$ if we pick the terms containing $\alpha_n^l \beta_n^l$ in each factor $e^{b\alpha_n^l \beta_n^l + c \alpha_n^l \beta_m^l + c \alpha_m^l \beta_n^l}$. In each of these factors, the coefficient in front of $\alpha_n^l \beta_n^l$ is given by $b-c^2 \alpha_m^l \beta_m^l=be^{-\frac{c^2}{b}\alpha_m^l \beta_m^l}$.
We thus obtain
\begin{align}\nonumber
{\pi_2}_\star \left( e^{\sum_{l=1}^g  b \alpha_n^l \beta_n^l + c \sum_{l=1}^g \alpha_n^l \beta_m^l + c \sum_{l=1}^g  \alpha_m^l \beta_n^l} e^{-cn\xi_m} \right)&= e^{-cn\xi_m} \prod_{l=1}^g be^{-\frac{c^2}{b}\alpha_m^l \beta_m^l}\\
&=b^g e^{-\frac{c^2}{b} \theta_m - cn \xi_m}
\end{align}
\end{proof}
Next we consider the case of non-localized quasiholes $p>0$.

\subsubsection{The case with non-localized quasiholes}

\mainqh*

\begin{proof}
For convenience, for any integer $i$ we write $[i]=\lbrace{1,\dots,i}\rbrace$. Furthermore, for $I\subset [g]$ a set, we write $(\alpha_n\beta_n)^I$ (resp. $(\alpha_m\beta_m)^I$) the product $\prod_{l\in I} \alpha_n^l \beta_n^l$ (resp. $\prod_{l\in I} \alpha_m^l \beta_m^l$).

We have 
\begin{align*}
f(\theta_n)&=\sum_r \frac{1}{r!}\binom{n-g+p}{p-r} \left( \sum_{l=1}^g  \alpha_n^l \beta_n^l\right)^r\\
&= \sum_r \binom{n-g+p}{p-r} \sum_{I \subset [g],|I|=r} \left(\alpha_n \beta_n\right)^I
\end{align*}
Thus
\begin{align*}
{\pi_2}_\star \left( f \left(\theta_n \right)  e^{ b \theta_n + c \eta_{n,m}} \right)&=
{\pi_2}_\star \left( f \left(\theta_n\right)  e^{\sum_l (b \alpha_n^l \beta_n^l + c (\alpha_n^l \beta_m^l + \alpha_m^l \beta_n^l))} \right)\\
&= {\pi_2}_\star \left(e^{\sum_l (b \alpha_n^l \beta_n^l + c (\alpha_n^l \beta_m^l + \alpha_m^l \beta_n^l))} \sum_r \binom{n-g+p}{p-r} \sum_{I \subset [g],|I|=r} \left(\alpha_n \beta_n\right)^I\right)\\
&=  \sum_{I\subset [g]} \binom{n-g+p}{p-|I|} {\pi_2}_\star \left(e^{\sum_l (b \alpha_n^l \beta_n^l + c (\alpha_n^l \beta_m^l + \alpha_m^l \beta_n^l))} \left(\alpha_n \beta_n\right)^I\right)\\
\end{align*}
In order for a term from $e^{\sum_l (b \alpha_n^l \beta_n^l + c (\alpha_n^l \beta_m^l + \alpha_m^l \beta_n^l))}$ to contribute, it must complete the monomial $\left(\alpha_n \beta_n\right)^I=\prod_{l\in I} \alpha_n^l \beta_n^l$ to a full monomial $\prod_{l=1}^g \alpha_n^l \beta_n^l$. In 
$$e^{b \alpha_n^l \beta_n^l + c (\alpha_n^l \beta_m^l + \alpha_m^l \beta_n^l)}=1+c(\alpha_n^l \beta_m^l + \alpha_m^l \beta_n^l) + \alpha_n^l \beta_n^l (b-c^2 \alpha_m^l \beta_m^l)$$ we need to choose the term $\alpha_n^l \beta_n^l (b-c^2 \alpha_m^l \beta_m^l)$ for each $l$ not in $I$ and $1$ otherwise.
We thus get $${\pi_2}_\star \left(e^{\sum_l (b \alpha_n^l \beta_n^l + c (\alpha_n^l \beta_m^l + \alpha_m^l \beta_n^l))} \left(\alpha_n \beta_n\right)^I\right)=\prod_{l\in I^c} (b-c^2 \alpha_m^l \beta_m^l).$$
Developing this product and re-indexing the sums, we get 
\eq{
 \sum_{I\subset [g]} \binom{n-g+p}{p-|I|} \prod_{l\in I^c} (b-c^2 \alpha_m^l \beta_m^l) &=\sum_{I\subset [g]} \binom{n-g+p}{p-|I|} \sum_{F \subset I^c} (-c^2)^{|F|} (\alpha_m \beta_m)^F b^{|I^c|-|F|} \\
 &=\sum_{k=0}^g \binom{n-g+p}{p-g+k}\sum_{F\subset [g]} (-c^2)^{|F|} (\alpha_m \beta_m)^F b^{k-|F|} \binom{g-|F|}{k-|F|}\\
}
where the binomial coefficient $\binom{g-|F|}{k-|F|}$ appears because for each fixed $F$, there are $\binom{g-|F|}{k-|F|}$ sets $I \subset [g]$ of size $g-k$ such that $F\subset I^c$. 
Regrouping $(\alpha_m \beta_m)^F$ to form $\theta_m$ since $$j! \sum_{F \subset [g],|F|=j} (\alpha_m \beta_m)^F=  {\theta_m}^j,$$ we get: 

\begin{align}
    {\pi_2}_\star \left(e^{-cn \xi_m} f \left(\theta_n \right)  e^{ b \theta_n + c \eta_{n,m}} \right)&=e^{-cn \xi_m}\sum_k \sum_j \frac{1}{j!}(-c^2 \theta_m)^j b^{k-j} \binom{n-g+p}{k-g+p}\binom{g-j}{k-j}
\end{align}

\end{proof}

We are now going to carry out this computation using explicit wave-functions.

\section{Recovering the Chern classes from explicit wave-functions in low genus}
\label{sec:explicit}

In the maximally filled case,
\begin{equation}
d = bn + cm + b(g-1),
\end{equation}
the bases for the Laughlin states without quasiholes have been constructed explicitly for $g=0$ \cite{Haldane1983}, for $g=1$ \cite{Haldane1985} and for $g>1$ in \cite{Klevtsov2019}. The bases with quasiholes can be rather straightforwardly inferred from those papers. Here, we focus on the cases $g=0$ and $g=1$. In the latter case, Laughlin states with quasiholes were constructed in \cite{Einarsson1990}. The explicit bases that we use in what follows for the family of sections $\{s_{i}(.|w) \}_{1 \le i \le b^g}$ of the line bundle
$L_w := L_{b,c,w}$ for $w \in \mathbb{C}^m$ are of the type used in \cite{Klevtsov_2016,Burban_Klevtsov_2025,burban2024norms}. In genus $0$ the holomorphic sections are constructed in terms of polynomials in projective coordinate, and in genus $1$ using theta functions. In both cases, this representation amounts to fixing a trivialization of the line bundles. By abuse of notation, we will denote sections and their trivializations by the same symbols.
Here, the goal is to recover the Chern characters $$\ch_i(V),\, i>0$$ of the vector bundle $V := V_{b,c,d,g,n,m}$ from these families of sections by computing the curvature of the natural Berry-Chern connection on this bundle. For simplicity, throughout this discussion we restrict ourselves to the case of a particle--quasihole interaction $c=1$. Note that this method does not provide the rank $\rk V = \ch_0(V)$ of the bundle. 

Let us fix a Riemannian metric, and thus a volume form $d{\rm Vol }$ on the curve $C$. We will also need a hermitian metric $(,)_{L_w}$ on $L_w$ for each $w$. Locally, it can be written as 
$$(s(z|w),s'(z|w))_{L_w}=h_w(z,\bar z)s(z|w)\ov{ s'(z|w)},$$
where the two form $-i\partial_z\bar\partial_z\log h_w$ is a global positive definite form and $\int_C -i\partial_z\bar\partial_z\log h_w=2\pi d$ for all $w\in C^m$.

Then the hermitian scalar product on $V$ is defined as an $L^2$ product
$$\langle s(.|w),s'(.|w) \rangle_{V} = \int_{S^n C} (s(z|w),s'(z|w))_{L_w} d{\rm Vol }_n(z),$$
where $d{\rm Vol }_n(z)$ is the descent of $\prod_{1\leq \mu \leq n} d{\rm Vol }(z_\mu)$ from $C^n$ to $S^n C$.

The operator $\ov{\partial}$ on $S^nC\times S^mC$ splits as $\ov{\partial}_n \otimes \text{id} + \text{id} \otimes \ov{\partial_m}$, and this $\ov{\partial}_m$ gives a Dolbeault operator $\ov{\partial}: s \to \left(z\to \ov{\partial}_w s(z|w)\right)$ on $V$.

Now, for each $w \in S^m C$, let $\{z \to s_{i}(z|w),1\leq i \leq b^g\}$ be a basis of sections  of the bundle of Laughlin states $L_w$ above $S^n C$ and suppose that this frame is holomorphic in $w$, in the sense that $\ov{\partial}_ms=0$. We then get a local holomorphic frame of $V$ as $$\{ w\to s_i(.|w), 1\leq i \leq b^g\}.$$ 
Let $$H=(\langle s_i(.|w),s_{j}(.|w)\rangle_{V})_{1\leq i,j\leq b^g}.$$ 
The Chern connection associated to $\langle , \rangle_V$ is given by the one form $\omega=H^{-1} \partial_m H$ and its curvature reads
$$\Omega=\ov{\partial}_m(H^{-1} \partial_m H)=-\partial_m\ov{\partial}_m\log H$$
from which one can recover the Chern characters of $V$ as 
$$\ch(V)=\left[ \tr(e^{\frac{i}{2\pi}\Omega})\right].$$
We will also need the following technical lemma.
\begin{lemma}
\label{lem:descendofsumxi}
Let $f:C^m \to S^mC$ and $\pi_i: C^m \to C$ the $i$-th projection. Denote by $[z_0]$ the class of the point $z_0$ on $C$. Then $\sum_i \pi_i^\star [z_0]=f^\star \xi_m$. 

\end{lemma}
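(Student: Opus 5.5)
The plan is to use the second description \eqref{eq:xisecondresentation} of $\xi_m$ as the divisor $X_{z_0}=\{w\in S^mC,\ z_0\in w\}$. Its pullback under $f$ is then compared, as a divisor, with the union of the divisors $\pi_i^{-1}(z_0)=\{w_i=z_0\}$ on $C^m$. Set-theoretically,
$$f^{-1}(X_{z_0})=\bigcup_{i=1}^m \{(w_1,\dots,w_m)\in C^m,\ w_i=z_0\}=\bigcup_i \pi_i^{-1}(z_0).$$
Each $\pi_i^{-1}(z_0)\simeq C^{m-1}$ is a smooth irreducible divisor whose cohomology class is $\pi_i^\star[z_0]$.

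The key step is to show that $f^\star X_{z_0}$ carries multiplicity one along each component $\{w_i=z_0\}$. For this, take a generic point of $\{w_1=z_0\}$, with $w_2,\dots,w_m$ pairwise distinct and different from $z_0$. Near such a point, $f$ is unramified, since the $w_\gamma$ are distinct. So $f$ is a local biholomorphism there, and local coordinates on $S^mC$ can be taken to be $(w_1,\dots,w_m)$ themselves. In these coordinates the local equation of $X_{z_0}$ is $\prod_\gamma (w_\gamma-z_0)$, which near the chosen point reduces to a unit times $(w_1-z_0)$. Its pullback therefore vanishes to order exactly one along $\{w_1=z_0\}$.

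The multiplicity is determined generically along each irreducible component, so this gives the equality of divisors $f^\star X_{z_0}=\sum_i \pi_i^{-1}(z_0)$. Pullback of divisors is compatible with pullback of first Chern classes of the associated line bundles, so we conclude $f^\star\xi_m=\sum_i\pi_i^\star[z_0]$. Alternatively, one can check the identity globally: $f^\star \mathcal O(X_{z_0})\simeq \boxtimes_i \mathcal O(z_0)$, because the section $\prod_\gamma(w_\gamma-z_0)$, read in suitable trivialisations, is exactly the descent of the $\mathfrak S_m$-invariant section $\otimes_i s_{z_0}(w_i)$.

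The main obstacle is the multiplicity check, in particular making sure no extra multiplicity appears. This could happen at points where $f$ ramifies, that is, on the big diagonal $\Delta_m$. Those points form a codimension-one subset of $C^m$, but they meet $\{w_i=z_0\}$ only in codimension two. So they do not affect the divisor, and the generic computation above suffices.
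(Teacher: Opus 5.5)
Your proposal is correct and takes the same route as the paper. Both use the representation of $\xi_m$ as the locus $\{w \in S^mC,\ z_0 \in w\}$, identify its preimage under $f$ set-theoretically with $\bigcup_i \pi_i^{-1}(z_0)$, and conclude from the absence of multiplicity. The paper only asserts the multiplicity-one claim, whereas you verify it with a local computation at a generic point of each component, where $f$ is unramified.
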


\begin{proof}
By Eq. \eqref{eq:xisecondresentation}, $\xi_m$ is the locus of points in $S^m C$ that contains at least once $z_0$. Set-theoretically, we have $f^{-1} \xi_m=\cup_i \pi^{-1}(\lbrace z_0\rbrace)$. Since there is no multiplicity, we have $f^\star \xi_m=\sum_i \pi_i^\star [z_0]$.

\end{proof}

\subsection{The case $C=\pone$}

Let $z$ be a projective coordinate on $\pone$ and let $h(z,\bar z)=\frac{1}{1+|z|^2}$

The class $-\frac{i}{2\pi}\partial \ov{\partial} \log h$ represents the class of a point on $\pone$. By lemma \ref{lem:descendofsumxi},  $-\frac{i}{2\pi}\partial_m \ov{\partial}_m \log \prod_{\gamma} h(w_\gamma)$ descends to the class $\xi_m$ on $\mathbb{P}^m$ through the map $f:(\pone)^{m}\to S^m \pone \simeq \mathbb{P}^m$.

By theorem \ref{thm:mainqhff}, the space of Laughlin states with quasiholes at position $w=w_1+\dots+w_m$ is one dimensional. It is spanned by $$s(z_1,\dots,z_n|w)=\prod_{1\leqslant\mu<\nu\leqslant n} (z_\mu - z_\nu)^b \prod_{\substack{1\leqslant\mu\leqslant n\\1\leqslant \gamma\leqslant m}}(z_\mu-w_\gamma).$$ 
For each $1\leq \mu \leq n$, $z_\mu \to s(z_1,\dots,z_n|w)$ has $b(n-1)+m=d$ zeroes in $z_\mu$. This implies that $$z_\mu\to h(z_\mu)^{b(n-1)+m} |s(z_1,\dots,z_n|w)|^2$$ is a globally defined positive definite function on $C$ and therefore that
$$(s(z|w),s'(z|w))_{L_w} = s(z|w)\ov{s'(z|w)} \prod_\mu h(z_\mu)^d$$ defines a hermitian metric on $L_w$.  The associated hermitian metric on $V$ is given by 
\begin{equation}
\label{eq:L2sphere}
\langle s(.|w),s(.|w) \rangle_V=\int_{S^n C}\prod_{\mu<\nu} |z_\mu - z_\nu |^{2b} \prod_{\mu,\gamma}  |z_\mu-w_\gamma|^2 \prod_\mu h(z_\mu)^d\; d{\rm Vol}_n(z).
\end{equation}

Note also that $w_\gamma\to s(z_1,\cdots,z_n|\omega)$ has $n$ zeros in each variable $w_\gamma$, thus $$s(z_1,\dots,z_n|w)\ov{s'(z_1,\dots,z_n|w)}\prod_\mu h(z_\mu)^d \prod_\gamma h(w_\gamma)^n$$ is a globally defined function on $S^n C \times S^m C$. This implies that $\langle s(.|w),s(.|w) \rangle_V \prod_{\gamma} h(w_\gamma)^{n}$ is a strictly positive globally defined function on $S^m C$. It follows that in cohomology, $$\left[\partial_m \ov{\partial}_m \log \left(\langle s(.|w),s(.|w)\rangle_V \prod_\gamma h(w_\gamma)^n\right)\right]=0.$$

Let $H=\langle s(.|w),s(.|w) \rangle_V $. We have 

\begin{align}\nonumber
    c_1(V)&= \left[\frac{i}{2\pi}\ov{\partial }_m (H^{-1} \partial_m H)\right]\\\nonumber
    &=\left[-\frac{i}{2\pi}\partial_m \ov{\partial}_m\log H\right]\\\nonumber
    &=\left[-\frac{i}{2\pi}\partial_m \ov{\partial}_m \log \prod_\gamma h(w_\gamma)^{-n}\right]\\
    &=-n\xi_m.
\end{align}

Since the rank of $V$ is one, we have 
$$\ch(V)=1+c_1(V)=e^{-n \xi_m}.$$

\subsection{The genus $1$ case}

Let $\tau \in \mathbb{C}$ with $\Im(\tau)>0$ and let $C=\mathbb{C}/({\mathbb{Z}+\tau \mathbb{Z}})$ be an elliptic curve. On the complex plane, we write $h:(z,\ov{z}) \mapsto \exp(-\frac{2\pi}{\Im(\tau)} \Im(z)^2)$ for a Hermitian metric on the line bundle of degree one. The associated $(1,1)$ form $-\frac{i}{2\pi}\partial \ov{\partial} \log h$ corresponds to the first Chern class and its Poincaré dual is the class of a point on $C$. 

We have
\begin{lemma}
\label{lem:repOfThetaAndXiInGenusOne}
The two form on $C^m$
$$-\frac{i}{2\pi}\partial_m \ov{\partial}_m \log e^{-\frac{2\pi}{\Im(\tau)}\Im(\sum_\gamma w_\gamma)^2}=\frac{i}{2\Im(\tau)}d\big(\sum_{\gamma=1}^m w_\gamma\big)\wedge d\big(\sum_{\gamma=1}^m\bar w_\gamma\big)$$ descends to the class $\theta_m$ on $S^mC$ and $$-\frac{i}{2\pi}\partial_m \ov{\partial}_m \log e^{-\frac{2\pi}{\Im(\tau)}\sum_\gamma \Im(w_\gamma)^2}=\frac{i}{2\Im(\tau)}\sum_{\gamma=1}^mdw_\gamma \wedge d\bar w_\gamma$$ descends to the class $\xi_m$ on $S^mC$.
\end{lemma}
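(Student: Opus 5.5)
The plan is to check the two identities on $C^m$ directly and then identify the descended classes through the symplectic description of Lemma \ref{lem:etainsimplecticbasis} and through Lemma \ref{lem:descendofsumxi}.

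\textbf{The forms.} Write $u=\sum_\gamma w_\gamma$, so that $\Im(u)^2=-\tfrac14(u-\bar u)^2$. Then
\[
\partial\bar\partial\,\Im(u)^2=\tfrac12\,du\wedge d\bar u,
\]
and hence
\[
-\frac{i}{2\pi}\,\partial\bar\partial\Bigl(-\frac{2\pi}{\Im\tau}\,\Im(u)^2\Bigr)=\frac{i}{2\Im\tau}\,du\wedge d\bar u .
\]
Applying the same computation to each $w_\gamma$ separately gives the second form. Both forms are invariant under translations and under $\mathfrak S_m$, so they descend to smooth closed forms on $S^mC$. The descent is smooth even across the diagonals, because the forms are pulled back from $C^m$-invariant forms; equivalently, the first is pulled back along the addition map.

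\textbf{The $\xi_m$ class.} On $C$, the form $\frac{i}{2\Im\tau}\,dw\wedge d\bar w$ has integral $1$ over the fundamental domain, whose area is $\Im\tau$. It is therefore the curvature representative of the degree-one bundle and represents $[z_0]$. Summing over $\gamma$ gives $\sum_\gamma\pi_\gamma^\star[z_0]$, which equals $f^\star\xi_m$ by Lemma \ref{lem:descendofsumxi}. Since $f^\star$ is injective on rational cohomology (it is a finite quotient map and $f^\star$ identifies with the invariants), the descended form represents $\xi_m$.

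\textbf{The $\theta_m$ class.} The first form is $\sigma_m^\star$ of $\frac{i}{2\Im\tau}\,du\wedge d\bar u$ on $\pic^m C\cong C$, where $\sigma_m:S^mC\to\pic^mC$ is the Abel–Jacobi/addition map. In genus one, $\theta_m$ is by definition the pullback of the theta divisor, that is, of the point class on the one-dimensional Jacobian. By Eq. \eqref{eq:ThetaAsDifferentialForm} it is $\alpha_m\wedge\beta_m$, the normalized volume form of total integral $1$. The form $\frac{i}{2\Im\tau}\,du\wedge d\bar u$ integrates to $1$ on $\pic^m C$, so it represents $\alpha_m\wedge\beta_m$, and pulling back yields $\theta_m$.

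\textbf{Main obstacle.} The delicate point is the normalization and orientation: confirming that $\frac{i}{2}\,du\wedge d\bar u=dx\wedge dy$ is positive, and that $\alpha\wedge\beta$ has integral $+1$ with the orientation chosen in Lemma \ref{lem:etainsimplecticbasis}. A related point is that the identification $\pic^mC\cong C$ via $z_0$ is a translation, so it does not affect the class. Both are checked by integrating over the fundamental parallelogram spanned by $1$ and $\tau$.
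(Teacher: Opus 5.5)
Your proof is correct and follows essentially the paper's route. The $\theta$-form is the pullback along $S^mC\to\pic^mC\cong C$ of the point class; you fix the constant by integrating to $1$, where the paper uses $NS(\pic^mC)=\mathbb{Z}\theta_m$ and identifies the form with the point class. The $\xi$-form goes through Lemma \ref{lem:descendofsumxi}, and you usefully add that $f^\star$ is injective on rational cohomology.

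One side claim is false and should be dropped: the $\xi$-form does not descend to a smooth form across the diagonals. For $m=2$, take local coordinates $s=w_1+w_2$, $t=(w_1-w_2)^2$ on $S^2C$; then $\sum_\gamma dw_\gamma\wedge d\bar w_\gamma=\tfrac12\,ds\wedge d\bar s+\tfrac{1}{8|t|}\,dt\wedge d\bar t$. Nothing in your argument depends on this, because you identify the class through $H^2(S^mC,\mathbb{Q})\cong H^2(C^m,\mathbb{Q})^{\mathfrak S_m}$.
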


\begin{proof}
We start with the first claim.  $$\{ w_1\dots,w_m\} \mapsto e^{-\frac{2\pi}{\Im(\tau)}\Im(\sum_\gamma w_\gamma)^2}$$ descends on $S^mC$, but since it depends only on $\sum_\gamma w_\gamma$, it in fact further descends to $\pic^m{C}$. We can assume $C$ to be generic, in which case the Neron-Severi group of $\pic^m C$ is generated by $\theta_m$. $e^{-\frac{2\pi}{\Im(\tau)}\Im(\sum_\gamma w_\gamma)^2}$ thus descends to a class $c\theta_m$ with $c\in \mathbb{Z}$.
As we are in the genus one case, the $\theta$ divisor corresponds to the class of a point, that of the origin $O$ of the elliptic curve (recall this comes from the description of $\theta$ as $\lbrace L \in \pic^{g-1}, h^0(C,L)>0\rbrace$).
    
Furthermore, the divisor $\sum_\gamma w_\gamma-m O$ corresponds to the point $z=\sum_\gamma w_\gamma$ on the elliptic curve through $C \simeq \pic^m C$. As $$e^{-\frac{2\pi}{\Im(\tau)}\Im(\sum_\gamma w_\gamma)^2}=e^{-\frac{2\pi}{\Im(\tau)}\Im(z)^2}$$ corresponds to the class of a point on the elliptic curve, we obtain $c=1$. 
For the second claim, note that $-\frac{i}{2\pi}\partial_m \ov{\partial}_m \log e^{-\frac{2\pi}{\Im(\tau)}\sum_\gamma \Im(w_\gamma)^2 }$ is the sum of pullbacks of the class of a point to $C^m$. the result follows by lemma \ref{lem:descendofsumxi}.
\end{proof}

Holomorphic sections of line bundle on the torus can be constructed with the help of theta functions, see e.g. \cite{mumford1983tata}. 
\begin{definition}[Theta functions]
Let $\tau \in \mathbb{C}$ with $\Im(\tau)>0$. We define the theta functions with parameter $\tau$ as:
$$z \to \theta(z,\tau)=\sum_{n\in\mathbb{Z}} \exp(\pi i n^2 \tau + 2 \pi i n z).$$

\end{definition}

This function is quasi-periodic and satisfies:

$$\theta(z+1,\tau)=\theta(z,\tau)$$
$$\theta(z+\tau,\tau)=\exp(-i\pi\tau -2i\pi z)\theta(z,\tau)$$

We also define the so-called theta function with characteristic $a,b\in\mathbb R$ as
\eq{
\theta \chara{a}{b}(z,\tau)&= \sum_{n\in\mathbb{Z}} \exp(\pi i(n + a)^2\tau + 2\pi i(n + a)(z + b))\\
&=\exp \left( \pi i a^2 \tau + 2\pi i a(z+b) \right) \theta(z+a\tau+b,\tau)
}
which satisfies
\begin{equation}
    \label{eq:thetashift}
\theta \chara{a}{b}(z+n+m\tau,\tau)=\exp(-i\pi m^2 \tau - 2\pi i m z +2\pi i (an+bm)) \theta \chara{a}{b}(z,\tau)
\end{equation}
 for any $n,m \in \mathbb{Z}$. In particular, the first Jacobi theta function
 $$
\theta_1(z,\tau):=\theta \chara{1/2}{1/2}(z,\tau)
$$ 
has exactly one simple zero at $z=0$. We also note that
\begin{proposition}
It follows immediately from the periodicity properties above that
\label{prop:theta}
$$\left|\theta \chara{a}{b}(z,\tau)\right|^2\exp\left(-\frac{2\pi}{\Im(\tau)} \Im(z)^2\right),$$
is a globally defined real-valued function on the torus.
\end{proposition}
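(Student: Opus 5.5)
The plan is to check the two sources of quasi-periodicity separately. The key tool is the transformation law \eqref{eq:thetashift} for $\theta\chara{a}{b}$ under $z\mapsto z+n+m\tau$, compared against the corresponding transformation of the Gaussian factor $\exp\bigl(-\frac{2\pi}{\Im(\tau)}\Im(z)^2\bigr)$. A function on $\mathbb{C}$ descends to the torus $C=\mathbb{C}/(\mathbb{Z}+\tau\mathbb{Z})$ exactly when it is invariant under both $z\mapsto z+1$ and $z\mapsto z+\tau$. It therefore suffices to show that the product $F(z)=|\theta\chara{a}{b}(z,\tau)|^2 h(z)$ is invariant under $z\mapsto z+n+m\tau$ for all integers $n,m$. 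Real-valuedness is immediate, since $F$ is a product of a squared modulus and a real exponential.

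The first step is to take squared moduli in \eqref{eq:thetashift}. Because $a,b$ are real, the factor $\exp(2\pi i(an+bm))$ has modulus one. We also have $|\exp(-i\pi m^2\tau)|^2=\exp(2\pi m^2\Im\tau)$ and $|\exp(-2\pi i m z)|^2=\exp(4\pi m\Im z)$. This gives
$$|\theta\chara{a}{b}(z+n+m\tau,\tau)|^2=\exp\bigl(2\pi m^2\Im(\tau)+4\pi m\Im(z)\bigr)\,|\theta\chara{a}{b}(z,\tau)|^2 .$$

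The second step is to expand the Gaussian. Since $\Im(z+n+m\tau)=\Im(z)+m\Im(\tau)$, we get
$$-\frac{2\pi}{\Im\tau}\bigl(\Im z+m\Im\tau\bigr)^2=-\frac{2\pi}{\Im\tau}\Im(z)^2-4\pi m\Im(z)-2\pi m^2\Im(\tau).$$
The last two terms cancel exactly the exponential prefactor from the first step, so $F(z+n+m\tau)=F(z)$.

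I do not expect a real obstacle here; the only care needed is the bookkeeping of moduli of the complex exponentials. In particular, one must note that $a,b$ real is what makes the character factor unimodular, and that $|e^{-i\pi m^2\tau}|$ depends only on $\Im\tau$. Smoothness of $F$ on the quotient then follows from smoothness on $\mathbb{C}$.
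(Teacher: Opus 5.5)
Your proof is correct and is exactly the argument the paper intends: take squared moduli in \eqref{eq:thetashift}, use that the character factor is unimodular because $a,b$ are real, and observe that the remaining factor $\exp\bigl(2\pi m^2\Im(\tau)+4\pi m\Im(z)\bigr)$ is cancelled by expanding the Gaussian at $z+n+m\tau$. Incidentally, a direct computation gives the character factor as $e^{2\pi i(an-bm)}$ rather than the $e^{2\pi i(an+bm)}$ written in \eqref{eq:thetashift}, but, as you note, only its unimodularity matters.
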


Laughlin states with quasiholes on the torus were first constructed in \cite{Einarsson1990}, following \cite{Haldane1985}. 
We refer to \cite{Klevtsov_2016,Burban_Klevtsov_2025,burban2024norms} for the following basis and for the proofs of its properties.

\begin{proposition}
\label{prop:basisgenusone}
The following is a basis of Laughlin states with $m$ localized quasiholes at positions $\{w_1,\dots,w_m\}$,
$$s_{l}(z_1,\dots,z_n|w)=\theta \chara{\frac{l}{b}}{0} \left(b\sum_{\mu=1}^n z_\mu + \sum_{\gamma=1}^m w_\gamma,b\tau\right) \prod_{ 1\leqslant\mu<\nu\leqslant n} \theta_1(z_\mu - z_\nu,\tau)^b \prod_{\substack{1\leqslant\mu\leqslant n\\1\leqslant\gamma\leqslant m}}\theta_1(z_\mu-w_\gamma,\tau)$$ for $0\leq l \leq b-1$.

\end{proposition}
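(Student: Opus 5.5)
The plan is to check three things: that each $s_l(\cdot|w)$ is a well-defined holomorphic section of $L_{b,1,w}$ (with $c=1$ and $d=bn+m$, the completely filled case $g=1$); that the $b$ sections are linearly independent; and that their number equals $\dim H^0(S^nC,L_{b,1,w})$. Theorem~\ref{thm:mainqhff} with $g=1$, $p=0$ gives rank $b^g=b$, so the dimension count is already available. It therefore suffices to show that the $s_l$ are sections satisfying the four axioms and that they are linearly independent.

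\textbf{Step 1: vanishing and symmetry.} The factor $\prod_{\mu<\nu}\theta_1(z_\mu-z_\nu,\tau)^b$ vanishes to order exactly $b$ on the diagonals, since $\theta_1$ has a single simple zero at $0$ modulo the lattice. It changes by $(-1)^b$ under a transposition, because $\theta_1$ is odd. Likewise $\prod_{\mu,\gamma}\theta_1(z_\mu-w_\gamma)$ vanishes to order $1$ on $W_w$, is symmetric in the $z$'s, and is symmetric in the $w$'s. The center-of-mass factor depends only on $\sum z_\mu$ and $\sum w_\gamma$, so it is fully symmetric. Hence $s_l$ has the required (anti)symmetry and vanishing orders, and it is holomorphic in both $z$ and $w$. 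This last point is what gives the holomorphic frame of $V$.

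\textbf{Step 2: quasi-periodicity.} I will use \eqref{eq:thetashift} to compute the multiplier of $s_l$ under $z_\mu\mapsto z_\mu+1$ and $z_\mu\mapsto z_\mu+\tau$, and check that it is the multiplier of the degree-$d$ line bundle with $d=b(n-1)+m+1$. The shift by $1$ is trivial: the characteristic $a=l/b$ enters as $e^{2\pi i a\cdot b}=1$, because the argument moves by $b$. For the shift by $\tau$, the argument $bz$ moves by $b\tau$, which is exactly one period for $\theta\chara{l/b}{0}(\cdot,b\tau)$. This produces a factor $e^{-i\pi b\tau-2\pi i(b\sum z+\sum w)}$. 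The $\theta_1$ factors produce $e^{-i\pi\tau-2\pi i(z_\mu-z_\nu)}$-type factors, $b(n-1)$ of them from the particles and $m$ from the quasiholes.

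The total exponent in $z_\mu$ is then $-2\pi i d z_\mu$, plus terms in the other variables. Those other terms are arranged to cancel. Specifically, $\sum_{\nu}z_\nu$ from the centre of mass cancels against $\sum_{\nu\ne\mu}z_\nu$ from the pair factors, and $\sum w$ cancels against the quasihole factors. This cancellation is exactly why the centre-of-mass argument is $b\sum z+\sum w$. The outcome is the standard automorphy factor of $L^{\boxtimes n}$ twisted by a fixed character, so $s_l$ is a section of $L_{b,1,w}$ for a suitable degree-$d$ line bundle $L$. Changing the characteristic shifts $L$ by a translation, which I will absorb into the choice of $L$ or of the origin. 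Positivity of the metric then follows from Proposition~\ref{prop:theta}.

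\textbf{Step 3: linear independence.} The functions $\theta\chara{l/b}{0}(u,b\tau)$ for $0\le l\le b-1$ are linearly independent in $u$, since their Fourier supports $\mathbb{Z}+l/b$ are disjoint. Dividing $s_l$ by the common nonzero product factor at a generic point, and noting that $u=b\sum z+\sum w$ varies freely, gives independence. Combined with the rank count, this shows the $s_l$ form a basis.

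The main obstacle will be the bookkeeping in Step 2, namely making the cross terms cancel and matching the resulting constant phases. If the constant phases do not match exactly, I would fix the line bundle $L$ (the choice of characteristic, or equivalently of the point in $\pic^d C$) so that they do, citing \cite{Burban_Klevtsov_2025} for that normalization.
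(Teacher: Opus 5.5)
Your plan is essentially the paper's proof: check the vanishing orders and (anti)symmetry, identify the degree/automorphy factor in each $z_\mu$, get linear independence from the classical independence of the $\theta\chara{l/b}{0}(\cdot,b\tau)$, and conclude with the rank $b^g=b$ from Theorem~\ref{thm:mainqhff}; your disjoint-Fourier-support argument simply replaces the paper's citation of Mumford. Two slips need fixing, and the second matters for the argument. First, the centre-of-mass factor contributes $b$, not $1$, to the degree in each $z_\mu$ (your own factor $e^{-2\pi i b z_\mu}$ shows this), so $d=b(n-1)+m+b=bn+m$, which is the degree the dimension count requires. Second, the multipliers do not depend on $l$ at all: the upper characteristic enters only as $e^{2\pi i (l/b)\cdot b}=1$ and the lower one is $0$. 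That is precisely why all $s_l(\cdot|w)$ lie in the same $H^0(S^nC,L_{b,1,w})$, so there is no translation to absorb.
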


\begin{proof}
Those quasi-periodic functions correspond to the sections of $L^{\boxtimes n}$ which correspond to the Laughlin states: they have the correct vanishing order, are (anti-)symmetric in each variable for $b$ even (odd). Furthermore, their restrictions to a single copy of $C$ give a section of fixed line bundle of degree $d=bn+cm$, as follows from counting the zeros in each $z_\mu$. 
Showing that the family is free is equivalent to showing that the family 
\begin{equation}
    \label{eq:stilde}
z \mapsto \tilde{s}_{l}(z|w)=\theta \chara{\frac{l}{b}}{0} \left(\sum_\mu b z_\mu + \sum_\gamma w_\gamma,b\tau\right)
\end{equation}
is free, which is a classical result, see e.g. \cite[Ch.2, Prop.1.3]{mumford1983tata}. 

This coincides with the topological degeneracy $b^g=b$ for the space of Laughlin states with localized quasiholes, which follows from the Theorem \ref{thm:mainqhff}.
\end{proof}

\paragraph{The determinantal case.}
We first treat the case $b=1$. By proposition \ref{prop:basisgenusone}, for a given $w=w_1+\dots+w_m\in S^m C$ the space of states with quasiholes is one dimensional and spanned by $$s\left(z_1,\dots,z_n|w\right)=\theta\bigg(\sum_\mu z_\mu + \sum_\gamma w_\gamma,\tau\bigg) \prod_{\mu<\nu} \theta_1(z_\mu - z_\nu,\tau) \prod_{\mu,\gamma}\theta_1(z_\mu-w_\gamma,\tau).$$

The following is a well-defined hermitian metric on $L_{w}$ above $S^n C$ 
$$(s(z|w),s'(z|w))_{L_w} = s(z|w) \ov{s'(z|w)} e^{-\frac{2\pi}{\Im(\tau)}(n+m)\sum_\mu \Im(z_\mu)^2}.$$
This is a globally defined function on $S^nC$. The associated Hermitian metric on $V$ is given by:
$$H:=\langle s(.|w),s'(.|w) \rangle_V = \int_{S^n C} s(z|w) \ov{s'(z|w)} e^{-\frac{2\pi}{\Im(\tau)}(n+m)\sum_\mu \Im(z_\mu)^2} d{\rm Vol}_n(z).$$

On the other hand 
$$s(z|w) \ov{s'(z|w)} e^{-\frac{2\pi}{\Im(\tau)} \left( \Im(\sum_\mu z_\mu+\sum_\gamma w_\gamma)^2+\sum_{\mu<\nu}\Im(z_\mu-z_\nu)^2 +\sum_{\mu,\gamma} \Im(z_\mu-w_\gamma)^2\right)}$$
is a globally defined function over $S^n C\times S^mC$, as follows from Prop. \ref{prop:theta}. Using the relations
\begin{align*}
(n+m)\left( \sum_\mu \Im(z_\mu)^2 + \sum_\gamma \Im(w_\gamma)^2 \right)&=\Im(\sum_\mu z_\mu)^2+\Im(\sum_\gamma w_\gamma)^2+ \sum_{\mu< \nu} \Im(z_\mu-z_\nu)^2\\
&+\sum_{\gamma< \delta} \Im(w_\gamma-w_\delta)^2+\sum_{\mu,\gamma} \Im(z_\mu-w_\gamma)^2,\\
m\sum_\gamma\Im(w_\gamma)^2&=\Im(\sum_\gamma w_\gamma)^2+ \sum_{\gamma< \delta} \Im(w_\gamma-w_\delta)^2
\end{align*}
we obtain
\begin{align}\label{exprel}\nonumber
   &e^{-\frac{2\pi}{\Im(\tau)}\Im (\sum_\mu z_\mu + \sum_\gamma w_\gamma)^2} e^{-\frac{2\pi}{\Im(\tau)} \sum_{\mu< \nu} \Im(z_\mu-z_\nu)^2} e^{-\frac{2\pi}{\Im(\tau)} \sum_{\mu,\gamma} \Im(z_\mu - w_\gamma)^2}\\
   &=e^{-\frac{2\pi}{\Im(\tau)}\left((n+m)\sum_\mu \Im(z_\mu)^2-n\sum_\gamma \Im(w_\gamma)^2-\Im(\sum_\gamma w_\gamma)^2\right)}
\end{align} 
It follows that $He^{-\frac{2\pi}{\Im(\tau)} \left( n \sum_\gamma \Im(w_\gamma)^2  + \Im(\sum_\gamma w_\gamma)^2\right)}$ is a strictly positive function on $S^mC$. Applying $\partial \ov{\partial}\log(.)$ to this function gives an exact form. Hence in cohomology,
\begin{align*}
    c_1(V)&=\left[-\frac{i}{2\pi}\partial_m \ov{\partial}_m\log H\right]\\
    &=\left[-\frac{i}{2\pi}\partial_m \ov{\partial}_m \log e^{\frac{2\pi}{\Im(\tau)} \left( n\sum_\gamma \Im(w_\gamma)^2  + \Im(\sum_\gamma w_\gamma)^2\right)}\right]\\
    &=-\theta_m-n\xi_m,
\end{align*}
by Lemma \ref{lem:repOfThetaAndXiInGenusOne}. Since $V$ has rank one, we have $$\ch(V)=1+c_1(V)=e^{-\theta_m-n \xi_m}.$$

\paragraph{The fractional case.}

In the fractional case, the following is a well-defined hermitian metric on $L_{w}$ above $S^n C$:

$$(s(z|w),s'(z|w) )_{L_w} = s(z|w) \ov{s'(z|w)} e^{-\frac{2\pi}{\Im(\tau)}(bn+m)\sum_\mu \Im(z_\mu)^2}.$$ The associated hermitian metric on $V$ is given by:
\begin{equation}
\label{eq:L2torus}
\langle s(.|w),s'(.|w) \rangle_V = \int_{S^n C} s(z|w) \ov{s'(z|w)} e^{-\frac{2\pi}{\Im(\tau)}(bn+m)\sum_\mu \Im(z_\mu)^2} d{\rm Vol}_n(z).
\end{equation}
A calculation completely analogous to Eq.\ \eqref{exprel} leads to the following relation
\begin{align*}
e^{-\frac{2\pi}{\Im(b\tau)} \left(\Im(b\sum_\mu z_\mu+\sum_\gamma w_\gamma)^2+b\sum_{\mu<\nu}\Im(z_\mu-z_\nu)^2 +\sum_{\mu,\gamma} \Im(z_\mu-w_\gamma)^2\right)}\\=e^{-\frac{2\pi}{\Im(\tau)} \left((bn+m) \sum_\mu \Im(z_\mu)^2 + \frac{1}{b} \Im(\sum_\gamma w_\gamma)^2 + n\sum_\gamma \Im(w_\gamma)^2 \right)}.
\end{align*}

Let $H=(\langle s_i(.|w),s_j(.|w)\rangle)_{V,1\leq i,j\leq b}$ be the Gram matrix associated to the basis of Prop.\ \ref{prop:basisgenusone} and the inner product Eq.\ \eqref{eq:L2torus}. We want to compute

\begin{align}
    \ch(V)=\left[\tr  \left( e^{- \frac{i}{2\pi}\partial_m \ov{\partial}_m \log H}\right)\right],
\end{align} where we used the fact that for each $w \in S^m C$, $H$ has non zero eigenvalues as the Gram matrix of a basis.

Let $$h(w)=e^{-\frac{2\pi}{\Im(\tau)} \left(\frac{1}{b} \Im(\sum_\gamma w_\gamma)^2 + n\sum_\gamma \Im(w_\gamma)^2 \right)}.$$
The difficulty is that in this case $H\cdot h(w)$ is not an invariant function on $S^mC$. However, this will not prevent us from carrying out the computation of the Chern character due to the following two lemmas. 

\begin{lemma}
Under $w_\gamma \to w_\gamma+1$, we have 
\begin{equation}
   \langle s_l(.|w),s_k(.|w) \rangle_V h(w) \mapsto q^{l-k}\langle s_{l}(.|w),s_{k}(.|w) \rangle_V h(w)
  \end{equation}
  where $q=e^{\frac{2\pi i}{b}}$.
Under $w_\gamma \to w_\gamma+\tau$, we have 
 
\begin{equation}
  \langle s_i(.|w),s_j(.|w) \rangle_V h(w) \mapsto \langle s_{i+1}(.|w),s_{j+1}(.|w) \rangle_V h(w) 
   \end{equation} with the convention $s_b=s_0$.
\end{lemma}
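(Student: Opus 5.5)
The plan is to reduce both claims to how the holomorphic sections $s_l(\cdot|w)$ of Prop.~\ref{prop:basisgenusone} change under the shifts. For both shifts we aim at an identity of the form
\[
s_l(z|w') = \phi(z,w)\, s_{l'}(z|w), \qquad w'=w+e_\gamma \text{ or } w+\tau e_\gamma ,
\]
where $\phi$ has modulus independent of $z$. Then we show that this modulus, squared, exactly cancels the ratio $h(w')/h(w)$. Since the fiber metric $e^{-\frac{2\pi}{\Im(\tau)}(bn+m)\sum_\mu \Im(z_\mu)^2}$ in Eq.~\eqref{eq:L2torus} does not depend on $w$, such an identity passes directly through the integral over $S^nC$.

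\textbf{Shift $w_\gamma\to w_\gamma+1$.} Write
\[
s_l = \theta\chara{l/b}{0}(Z,b\tau)\, F(z,w), \qquad Z=b\sum_\mu z_\mu+\sum_\gamma w_\gamma ,
\]
where $F$ is the product of the $\theta_1$ factors.
\begin{itemize}
\item By Eq.~\eqref{eq:thetashift} with the characteristic $[1/2;1/2]$, each factor $\theta_1(z_\mu-w_\gamma-1,\tau)$ acquires the sign $-1$. So $F$ is multiplied by $(-1)^n$, and this sign cancels in $s_l\overline{s_k}$.
\item The factor $\theta\chara{l/b}{0}(Z+1,b\tau)$ acquires $e^{2\pi i l/b}$, again by Eq.~\eqref{eq:thetashift}.
\item The function $h(w)$ depends only on imaginary parts, so it is invariant under this shift.
\end{itemize}
Hence $\langle s_l,s_k\rangle_V h$ is multiplied by $q^{l}\overline{q^{k}}=q^{l-k}$.

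\textbf{Shift $w_\gamma\to w_\gamma+\tau$.} This case carries the real content.
\begin{itemize}
\item First we use the elementary identity, read off from the series definition,
\[
\theta\chara{a}{0}(Z+\varepsilon\tau',\tau') = e^{-\pi i\varepsilon^2\tau'-2\pi i\varepsilon Z}\,\theta\chara{a+\varepsilon}{0}(Z,\tau').
\]
We apply it with $\tau'=b\tau$ and $\varepsilon=1/b$, so that $\varepsilon\tau'=\tau$. This turns $\theta\chara{l/b}{0}$ into $\theta\chara{(l+1)/b}{0}$, with the prefactor $e^{-\pi i\tau/b-2\pi i(\sum_\mu z_\mu+\frac1b\sum_\delta w_\delta)}$.
\item For $l=b-1$ the new characteristic is $1$. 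Since $\theta\chara{a+1}{0}=\theta\chara{a}{0}$, this gives the convention $s_b=s_0$.
\item Next, Eq.~\eqref{eq:thetashift} with $m=-1$ shows that each factor $\theta_1(z_\mu-w_\gamma-\tau,\tau)$ produces $e^{2\pi i(z_\mu-w_\gamma)}$ times a constant phase. Over all $\mu$ this gives $e^{2\pi i\sum_\mu z_\mu}$, which cancels the $e^{-2\pi i\sum_\mu z_\mu}$ from the first step.
\item Thus $s_l(z|w+\tau e_\gamma)=\phi(w)\,s_{l+1}(z|w)$, where $\phi$ depends on $w$ only, and so $\langle s_l,s_k\rangle_V$ becomes $|\phi(w)|^2\langle s_{l+1},s_{k+1}\rangle_V$.
\end{itemize}

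\textbf{Main obstacle.} It remains to check that $|\phi(w)|^2 h(w+\tau e_\gamma)/h(w)=1$. We will not do this by brute expansion. Instead we use Prop.~\ref{prop:theta}: each of
\[
|\theta\chara{l/b}{0}(Z,b\tau)|^2e^{-\frac{2\pi}{\Im(b\tau)}\Im(Z)^2},\qquad |\theta_1(x,\tau)|^2e^{-\frac{2\pi}{\Im(\tau)}\Im(x)^2}
\]
is lattice-invariant. Moreover, the $\theta\chara{l/b}{0}$ quantity is invariant under $Z\to Z+\tau$ up to the relabeling $l\to l+1$, since that shift is a $1/b$-period for the lattice $\mathbb Z+b\tau\mathbb Z$ and the transformation computed above is exactly the characteristic shift.

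Consequently $s_l\overline{s_k}$, multiplied by the Gaussian on the left-hand side of the displayed identity before $h$ is introduced, maps to the corresponding expression with indices $l+1,k+1$. That identity rewrites this Gaussian as $e^{-\frac{2\pi}{\Im(\tau)}(bn+m)\sum_\mu\Im(z_\mu)^2}\,h(w)$. Integrating over $S^nC$ then yields the claim.

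The delicate point is checking that the phases and the $\Im(Z)^2$ Gaussian transform consistently under the fractional shift $Z\to Z+\tau$. We will verify this directly from the quasi-periodicity computation above, taking real parts of the exponents.
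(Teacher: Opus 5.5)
Your proposal is correct and follows the same route as the paper. It handles the $+1$ shift via Eq.~\eqref{eq:thetashift}, and the $+\tau$ shift via the characteristic-shifting identity combined with the quasi-periodicity of the $\theta_1$ factors and Prop.~\ref{prop:theta}; that identity is exactly the paper's Eq.~\eqref{trans}, which the paper leaves to the reader.

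Two minor imprecisions, neither of which breaks the argument. First, the constant $-e^{-i\pi\tau}$ produced by each shifted $\theta_1$ is not of modulus one; this is harmless because you do the modulus bookkeeping through Prop.~\ref{prop:theta}. Second, the Gaussian you multiply by must be the product of the factor-wise Gaussians, with $\frac{2\pi}{\Im(b\tau)}$ only in front of $\Im(Z)^2$ and $\frac{2\pi}{\Im(\tau)}$ in front of the $\theta_1$ terms. With that correction the paper's displayed relation holds; as typeset, with $\Im(b\tau)$ in front of all three terms, it is false.
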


\begin{proof}
For the shift $w_\gamma \mapsto w_\gamma+1$, the formula above follows directly from Eq. \eqref{eq:thetashift} and Prop. \ref{prop:theta}. 

For the shift $w_\gamma \mapsto w_\gamma+\tau$  the following transformation  formula holds
\begin{equation}\label{trans}
\tilde{s}_{l}(z_1,\dots,z_n|w) \mapsto e^{-\pi i \frac{\tau}{b}} e^{-2\pi i \frac{1}{b}\left(b\sum_\mu z_\mu + \sum_\gamma w_\gamma\right)} \tilde{s}_{l+1}(z_1,\dots,z_n|w),
\end{equation}
where
$$z \mapsto \tilde{s}_{l}(z|w)=\theta \chara{\frac{l}{b}}{0} \left(\sum_\mu b z_\mu + \sum_\gamma w_\gamma,b\tau\right)$$
was introduced in Prop. \ref{prop:basisgenusone} and Eq. \eqref{eq:stilde}. 
The proof of Eq.\ \eqref{trans} is analogous to \cite[Lem. 2.9]{Burban_Klevtsov_2025} and we leave it to the reader.
\end{proof}

\begin{lemma}
  Let $H'=Hh$. In cohomology $[\tr e^{-\frac{i}{2\pi}\partial_m \ov{\partial}_m \log H'}]=\rk(V)$.
\end{lemma}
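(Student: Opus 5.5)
The plan is to treat $H'=Hh$ as a hermitian metric on a \emph{trivial} holomorphic bundle over a compact torus covering $S^mC$, and then apply Chern--Weil theory there. Since $h$ is a positive scalar function and the $s_l$ form a basis for every $w$, $H'$ is a smooth positive definite $b\times b$ matrix-valued function of $(w_1,\dots,w_m)\in\mathbb{C}^m$. Hence
$$\Omega'=\ov{\partial}_m(H'^{-1}\partial_m H')=-\partial_m\ov{\partial}_m\log H'$$
is the curvature of the Chern connection of the metric $H'$ on the trivial bundle $\mathbb{C}^b\times\mathbb{C}^m$, with its constant holomorphic frame.

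\textbf{Step 1: $\tr e^{\frac{i}{2\pi}\Omega'}$ descends to $S^mC$.} Set $D=\mathrm{diag}(q^l)$ and let $P$ be the cyclic permutation matrix $e_l\mapsto e_{l+1}$; both are constant unitaries. The preceding lemma says that a shift $w_\gamma\mapsto w_\gamma+1$ sends $H'\mapsto DH'D^{*}$, and a shift $w_\gamma\mapsto w_\gamma+\tau$ sends $H'\mapsto P^{*}H'P$ (up to the direction convention). If $U$ is a constant unitary and $H'\mapsto UH'U^*$, then $H'^{-1}\partial H'\mapsto (U^*)^{-1}(H'^{-1}\partial H')U^*$, so $\Omega'$ is conjugated by a constant matrix. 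Therefore every $\tr\Omega'^k$ is invariant under the lattice $(\mathbb{Z}+\tau\mathbb{Z})^m$.

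The Gram matrix is also symmetric under permutations of the $w_\gamma$, since $s_l$ and $h$ are. Note that $D$ and $P$ commute only up to $q$, so $H'$ is not a metric on a flat bundle over $C^m$ itself. This does not matter: the trace forms are scalar, and $\tr e^{\frac{i}{2\pi}\Omega'}$ descends to a closed form $\Phi$ on $S^mC$.

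\textbf{Step 2: pass to a finite cover where $H'$ is periodic.} Let $T=(\mathbb{C}/(b\mathbb{Z}+b\tau\mathbb{Z}))^m$, and let $F:T\to C^m\to S^mC$ be the composition of the isogeny with the quotient map; it is finite and surjective of some degree $N$. Since $D^b=P^b=I$, iterating the lemma $b$ times shows that $H'$ is genuinely invariant under the lattice $(b\mathbb{Z}+b\tau\mathbb{Z})^m$. So $H'$ is a hermitian metric on the trivial holomorphic bundle $\mathcal{O}_T^{\oplus b}$ over the compact torus $T$. By Chern--Weil theory,
$$[\tr e^{\frac{i}{2\pi}\Omega'}]=\ch(\mathcal{O}_T^{\oplus b})=b\quad\text{in } H^\bullet(T,\mathbb{R}).$$
On the other hand, this form on $T$ is $F^*\Phi$, so $F^*([\Phi]-b)=0$.

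\textbf{Step 3: injectivity of $F^*$.} Since $F$ is a finite surjective map between compact complex spaces of the same dimension (one an orbifold quotient of a torus, the other smooth), we have $F_*F^*=N\cdot\mathrm{id}$ on real cohomology. Hence $F^*$ is injective, and $[\Phi]=b=\rk(V)$.

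I expect the main obstacle to be this last step. For a clean argument, I would rather split $F$ into the unramified covering $T\to C^m$, where pullback on de Rham cohomology is injective by averaging over deck transformations, and the quotient $C^m\to S^mC$, where $H^\bullet(S^mC,\mathbb{R})\cong H^\bullet(C^m,\mathbb{R})^{\mathfrak{S}_m}$ via pullback. Both are standard averaging arguments and avoid needing a general pushforward for branched maps. A second point to check is the exact direction of the conjugation in the lemma's shift formulas, but only the fact that the conjugating matrices are constant unitaries of order dividing $b$ is used.
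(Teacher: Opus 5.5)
Your proof is correct, but it is organized differently from the paper's. The paper never leaves $C^m$. For $k>0$ it writes the explicit transgression form $\chi=\tr\bigl((\partial_m\ov{\partial}_m\log H')^{k-1}\ov{\partial}_m\log H'\bigr)$, with $d\chi=\tr(\partial_m\ov{\partial}_m\log H')^k$. It then notes that $\chi$ is unchanged when $H'$ is conjugated by a constant matrix, so the shift rules make $\chi$ globally defined and every positive-degree term exact.

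You instead use that the conjugating matrices have order $b$ to make $H'$ itself periodic on the isogenous torus $T$. There it is a metric on a trivial holomorphic bundle, so Chern--Weil gives the class $b$ with no transgression formula. The price is the descent step. Your proposed splitting handles it cleanly: $T\to C^m$ is an unramified finite covering, and $H^\bullet(S^mC,\mathbb{R})\cong H^\bullet(C^m,\mathbb{R})^{\mathfrak{S}_m}$ via pullback, so no pushforward along branched maps is needed.

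Your route also buys precision on two points the paper glosses over.

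First, your argument is about the genuine Chern curvature $\ov{\partial}_m(H'^{-1}\partial_m H')$, which is what Proposition~\ref{prop:classchernconnection} uses. The identity $\ov{\partial}_m(H'^{-1}\partial_m H')=-\partial_m\ov{\partial}_m\log H'$ that you quote from the paper holds only when $H'$ commutes with its derivatives. So the paper's $\chi$ literally treats the matrix-logarithm expression; for the curvature one should take the Chern--Simons form of the connection form $H'^{-1}\partial_m H'$, which is invariant under constant conjugation for the same reason. Conversely, your Chern--Weil argument treats the curvature but not the literal matrix-logarithm expression.

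Second, $\mathfrak{S}_m$-invariant smooth forms on $C^m$ need not be smooth on $S^mC$. For $m=2$, in coordinates $s=w_1+w_2$, $u=(w_1-w_2)^2$, one has $\sum_\gamma dw_\gamma\wedge d\ov{w}_\gamma=\frac12 ds\wedge d\ov{s}+\frac{1}{8|u|}du\wedge d\ov{u}$, and this form enters $\tr\Omega'$ through $\log h$. So your Step 1, like the paper's claim that $\chi$ is a smooth form on $S^mC$, is only correct as a statement about classes in $H^\bullet(C^m,\mathbb{R})^{\mathfrak{S}_m}$. That is exactly where your splitting places the argument.
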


\begin{proof}
The first term in the Taylor expansion of the exponential gives $\rk(V)$ when taking the trace, and we have to show that $\forall k>0$, $[\tr \left( (\partial_m \ov{\partial}_m \log H')^k\right)]=0$.

Set $$\chi=\tr \left( \left(\partial_m \ov{\partial}_m \log H'\right)^{k-1} \ov{\partial}_m \log H' \right).$$ 
From the previous Lemma it follows that
under $w_\gamma \to w_\gamma+1$ we have $H' \mapsto PH'P^{-1}$ and under $w_\gamma \to w_\gamma+\tau$, $H' \mapsto QH'Q^{-1}$, where $P$ and $Q$ are the following constant matrices
\[
P=
\begin{pmatrix}
1 & 0 & 0 & \cdots & 0 \\
0 & q & 0 & \cdots & 0 \\
0 & 0 & q^2 & \cdots & 0 \\
\vdots & \vdots & \vdots & \ddots & \vdots \\
0 & 0 & 0 & \cdots & q^{b-1}
\end{pmatrix},\quad\quad\quad Q= 
\begin{pmatrix}
0 & 0 & \cdots & 0 & 1 \\
1 & 0 & \cdots & 0 & 0 \\
0 & 1 & \cdots & 0 & 0 \\
\vdots & \vdots & \ddots & \vdots & \vdots \\
0 & 0 & \cdots & 1 & 0
\end{pmatrix},
\]
where recall $q=e^{\frac{2\pi i}{b}}$.
Upon conjugation of $H'$ by any invertible constant matrix $U$, $\chi$ is invariant. Indeed,
\begin{align*}
    &\tr \left( \left(\partial_m \ov{\partial}_m (\log (UH'U^{-1})\right))^{k-1} \ov{\partial}_m (\log (UH'U^{-1}))\right)\\
    &=\tr \left( \left(\partial_m \ov{\partial}_m (U\log H'U^{-1})\right)^{k-1} \ov{\partial}_m (U\log H'U^{-1})\right)\\
    &=\tr \left(  \left(U(\partial_m \ov{\partial}_m \log H' )U^{-1}\right)^{k-1} U\ov{\partial}_m (\log H')U^{-1}\right)\\
    &=\tr \left( U \left(\partial_m \ov{\partial}_m \log H' \right)^{k-1} \ov{\partial}_m \log H' U^{-1}\right)\\
    &=\chi
\end{align*} and thus $\chi$ is a smooth form defined on $S^m C$. Furthermore, $d\chi=\tr \left(\partial_m \ov{\partial}_m \log H'\right)^k$ hence the result.

\end{proof}

We can now recover explicitly the result of theorem \ref{thm:mainqhff} and get the Chern character of $V$. We have $$\ch(V)=[\tr e^{\frac{i}{2\pi} \Omega}]$$
and we obtain the following proposition.

\propclasschernconnection*

\begin{proof}
\begin{align*}
    \left[ \tr e^{\frac{i}{2\pi} \Omega} \right]&=\left[ \tr \left( e^{-\frac{i}{2\pi}\partial_m \ov{\partial}_m \log Hh} e^{-\frac{i}{2\pi}\partial_m \ov{\partial}_m \log h^{-1}}\right) \right]\\
    &=\left[ \tr \left( e^{-\frac{i}{2\pi}\partial_m \ov{\partial}_m \log H'} \right) e^{\frac{i}{2\pi}\partial_m \ov{\partial}_m \log h}\right]\\
    &=\rk(V) e^{\left[\frac{i}{2\pi} \partial_m \ov{\partial}_m \log h\right] }
 \end{align*}   
    
    By Lemma \ref{lem:repOfThetaAndXiInGenusOne}, $$-\bigg[\frac{i}{2\pi}\partial_m \ov{\partial}_m \log h\bigg]=\frac{1}{b}\theta_m+n\xi_m$$ and thus 
    $$\ch(V)=be^{-\frac{1}{b}\theta_m-n\xi_m}$$
\end{proof}

\section{Charge transport with quasiholes, and quasihole statistics}

 It is interesting to consider the variations of the holomorphic line bundle $L$ representing the magnetic field. The moduli space of holomorphic line bundles $L$ is the Picard variety $\pic^d(C)$. Following \cite{Avron_Seiler_Zograf_1994}, the adiabatic transport on $\pic^d(C)$ corresponds to varying the Aharonov-Bohm fluxes through the holes of the surface and thus induces the electric Hall current in the sample. The first Chern class of the bundle of quantum Hall states corresponds to the quantized Hall conductance. For Laughlin states, the Hall conductance equals the slope of the Laughlin bundle \cite{Klein_Seiler_1990}, i.e. the ratio of the first Chern class and the rank. It was computed in Ref.\ \cite{Klevtsov_Zvonkine_2025} for Laughlin states and for multilayer states in Ref.\ \cite{Aldonza_Dupont_2025}. 

In the paradigm of the Laughlin states, Eq.\ \eqref{Psib}, if the particles $z_\mu$ are assigned a unit electric charge, the quasihole has a fractional charge $c/b$ of the opposite sign. This is because setting $c=b$ and replacing $w_1$ by $z_{N+1}$ in Eq.\ \eqref{Psib} is equivalent to an insertion of $N+1$ particles. Thus, the quasiholes $w_\gamma$ are electrically charged and participate in the Hall transport on the same footing as particles $z_\mu$. The goal of this section is to capture their Hall conductance via the Chern character calculation. 

With this goal in mind, we construct a universal line bundle $\mathcal{U}_{b,c}$ above $S^n C \times S^m C \times \pic^d C$ that we then push forward on $S^m C \times \pic^d C$.

\begin{proposition}
There exists a unique line bundle $\mathcal{U}_{b,c}$ above $S^n C \times S^m C \times \pic^d$ such that
$${\mathcal{U}_{b,c}}_{|S^n C \times S^m C \times \{[L]\} } \simeq \mathcal{L}_{b,c}$$
$${\mathcal{U}_{b,c}}_{ \{(n z_0,m z_0)\} \times \pic^d C} \text{ is trivial.}$$

\end{proposition}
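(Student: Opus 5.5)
The plan mirrors the earlier construction of $\mathcal{L}_{b,c}$: build an equivariant line bundle upstairs on $C^n\times C^m\times \pic^d C$, descend it, then use the seesaw principle for uniqueness. Uniqueness comes first. If $\mathcal{U}$ and $\mathcal{U}'$ both satisfy the two conditions, then $\mathcal{U}\otimes\mathcal{U}'^{-1}$ is trivial on every fiber $S^nC\times S^mC\times\{[L]\}$, since both restrictions are $\mathcal{L}_{b,c}$ built from the same $L$. By the Seesaw theorem (\cite[\S 2, Corollary 6]{Mumford_1988}, as in the proof for $\mathcal{L}_{b,c}$), $\mathcal{U}\otimes\mathcal{U}'^{-1}$ is therefore a pullback from $\pic^d C$. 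Restricting to $\{(nz_0,mz_0)\}\times\pic^d C$, where both are trivial, shows that this pullback is trivial.

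For existence, start from a Poincaré bundle $\mathcal{P}\to C\times\pic^d C$, normalized so that $\mathcal{P}|_{\{z_0\}\times\pic^d C}$ is trivial and $\mathcal{P}|_{C\times\{[L]\}}\simeq L$. On $C^n\times\pic^d C$ form $\mathcal{P}^{\boxtimes n}:=\bigotimes_\mu (\pi_\mu\times\mathrm{id})^\star\mathcal{P}$. This is $\mathfrak{S}_n$-equivariant, restricts to $L^{\boxtimes n}$ on each fiber $C^n\times\{[L]\}$, and is trivial on $\{(z_0,\dots,z_0)\}\times\pic^d C$. Pull it back to $C^n\times C^m\times \pic^d C$ and twist by $O(-b\,\mathrm{pr}^\star\Delta_n - c\,\mathrm{pr}^\star\Delta_{n,m})$, where the diagonals are pulled back along the projection forgetting the $\pic^d C$ factor. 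All ingredients are $\mathfrak{S}_n\times\mathfrak{S}_m$-invariant, so the result descends to $\mathcal{U}_{b,c}$ on $S^nC\times S^mC\times\pic^d C$. The descent is justified exactly as for $\mathcal{L}_{b,c}$: on the fixed loci, the stabilizers act trivially on the fibers of $\mathcal{P}^{\boxtimes n}(-b\Delta_n)$ for the appropriate linearization, the same local check as in Kempf's descent argument already used in the paper.

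It remains to check the two conditions. On $S^nC\times S^mC\times\{[L]\}$ the construction reduces verbatim to the one defining $\mathcal{L}_{b,c}$, which gives the first isomorphism. On $\{(nz_0,mz_0)\}\times\pic^d C$ the twisting divisor $O(-b\Delta_n-c\Delta_{n,m})$ is pulled back from $S^nC\times S^mC$, so its restriction is constant in the $\pic^d C$ direction and hence trivial. The remaining factor $\mathcal{P}^{\boxtimes n}$ restricts to $(\mathcal{P}|_{\{z_0\}\times\pic^d})^{\otimes n}$, which is trivial by the normalization.

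The main obstacle I expect is descent through the diagonal, where the stabilizer is nontrivial. The twist by $-b\Delta_n$ interacts with the sign character for odd $b$, and one must make sure that the equivariant structure on $\mathcal{P}^{\boxtimes n}$ chosen to match the (anti)symmetry convention still restricts correctly to $\{(nz_0,mz_0)\}\times\pic^d C$. There the relevant stabilizer is all of $\mathfrak{S}_n\times\mathfrak{S}_m$, acting on a trivial bundle over $\pic^d C$ through a character. Since $\pic^d C$ is connected, this character is constant, so the descended bundle is still trivial there. If this argument becomes delicate, I would instead work only up to isomorphism and repair the restriction by tensoring with a suitable pullback from $\pic^d C$, which leaves the first condition unchanged.
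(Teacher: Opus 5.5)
Your proposal is correct and follows the same route as the paper. Uniqueness comes from the seesaw theorem, and existence from pulling back $P^{\boxtimes n}$ from $C^n\times\pic^d C$ to $C^n\times C^m\times\pic^d C$, twisting by the diagonal divisors, and descending under $\mathfrak{S}_n\times\mathfrak{S}_m$. Your version is more careful than the paper's sketch, which twists only by $\Delta_{n,m}$: the $-b\Delta_n$ term and the coefficient $c$ are needed to recover $\mathcal{L}_{b,c}$ on the fibers. The paper also neither checks the normalization on $\{(nz_0,mz_0)\}\times\pic^d C$ nor discusses linearizations along the diagonals. One small correction: the paper never invokes Kempf's descent lemma; it simply asserts descent from invariance.
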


\begin{proof}
Uniqueness is given by the SeeSaw theorem. For existence, let $P\to C \times \pic^d C$ be the poincaré line bundle such that 

$$P_{|C\times \{[L]\}}\simeq L$$
$$P_{|\{z_0\}\times \pic^d C} \text{ is trivial.}$$

One can then consider $P^{\boxtimes n}$ on $C^n\times \pic^d C$ and pull it back to $C^n \times C^m \times \pic^d C$. Tensoring by the pullback of the divisor $\Delta_{n,m}$ on $C^n \times C^m$ to $C^n \times C^m \times \pic^d C$, we get a line bundle which descends to $\mathcal{U}_{b,c}$ a line bundle which satisfies the proposition.

\end{proof}

By analogy with \ref{lem:etainsimplecticbasis}, we introduce the cohomology classes
$$\eta_{n,d}=\sum_{r=1}^g (\alpha_n^r \wedge\beta_d^r +\alpha_d^r \wedge\beta_n^r), \, \eta_{m,d}=\sum_{r=1}^g (\alpha_m^r \wedge\beta_d^r +\alpha_d^r \wedge\beta_m^r)$$
where $\alpha_d^l,\beta_d^l$ is a symplectic basis of the degree one cohomology of $\pic^d C$.

\begin{proposition}
    $$c_1(\mathcal{U}_{b,c})=b\theta_n + c \eta_{n,m} + p \xi_n -n c \xi_m - \eta_{n,d}$$
    where $$d-p=b(n+g-1)+cm$$ and $\eta_{n,d}$ the mixed class pulled back from $\pic^nC \times \pic^d C$. 
\end{proposition}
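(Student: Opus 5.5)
We compute $c_1(\mathcal{U}_{b,c})$ by splitting it into the part already known from $\mathcal{L}_{b,c}$ and a correction coming only from the Poincaré bundle. By construction, $\mathcal{U}_{b,c}$ is the descent to $S^nC\times S^mC\times\pic^dC$ of
\[
\mathrm{pr}^\star\bigl(P^{\boxtimes n}\bigr)\otimes \mathcal{O}(-b\Delta_n - c\Delta_{n,m}),
\]
where the divisor pieces are pulled back from $C^n\times C^m$. The divisor part is therefore independent of the $\pic^d C$ factor, and by Lemma \ref{lem:expincohomologyofbigdiag} (as in the proof of Proposition \ref{prop:firstcherclassunivquasiholebundle}) its class is
\[
-\tfrac b2 S\Delta_n - c\,S\Delta_{n,m} = b\theta_n-b(n+g-1)\xi_n + c\eta_{n,m} - cm\xi_n - cn\xi_m .
\]
It remains to compute the class of the descent of $P^{\boxtimes n}$ to $S^nC\times\pic^dC$.

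For this we use the standard first Chern class of the normalized Poincaré bundle on $C\times\pic^d C$. With $[z_0]$ the point class on $C$ and $\alpha^r,\beta^r$ pulled back from $C$,
\[
c_1(P)= d[z_0] - \sum_{r=1}^g\bigl(\alpha^r\wedge\beta_d^r+\alpha_d^r\wedge\beta^r\bigr),
\]
up to the sign convention of the symplectic basis. There is no $\theta_d$ term because $P$ is trivial on $\{z_0\}\times\pic^d C$. Summing over the $n$ factors, the point classes descend to $d\xi_n$ by Lemma \ref{lem:descendofsumxi}. The mixed terms $\sum_\mu \pi_\mu^\star\alpha^r$ descend to $\alpha^r_n$, since $\alpha_n^r$ pulls back under the Abel--Jacobi/addition map to the sum of the $\alpha^r$ on each factor, and likewise for $\beta$. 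So the mixed part descends to $-\eta_{n,d}$, and $P^{\boxtimes n}$ descends with class $d\xi_n-\eta_{n,d}$.

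Adding the two contributions gives
\[
c_1(\mathcal{U}_{b,c})=b\theta_n + c\eta_{n,m} + \bigl(d-b(n+g-1)-cm\bigr)\xi_n - cn\xi_m - \eta_{n,d},
\]
and the coefficient of $\xi_n$ is $p$ by the definition $d-p=b(n+g-1)+cm$.

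As a consistency check, restricting to a slice $\{[L]\}$ kills $\eta_{n,d}$ and recovers Proposition \ref{prop:firstcherclassunivquasiholebundle}. Restricting to $\{(nz_0,mz_0)\}\times\pic^dC$ gives zero, matching the triviality normalization; by the Seesaw uniqueness this pins down the class.

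The main obstacle is the sign and normalization of the mixed Künneth term in $c_1(P)$, which ultimately fixes the sign in front of $\eta_{n,d}$. I would verify it by checking that $\pi_\star$ of $c_1(P)^2/2$ over $C$ reproduces $-\theta_d$, which is the standard identity $\pi_\star(\mathrm{ch}P)=\ldots-\theta$, in the same symplectic basis used in Lemma \ref{lem:etainsimplecticbasis}. If that convention produces the opposite sign, the $\alpha_d$, $\beta_d$ basis should be reoriented accordingly.
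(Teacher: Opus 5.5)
Your route is the paper's. You take the divisor contribution $-\tfrac b2 S\Delta_n-c\,S\Delta_{n,m}$ over unchanged from Proposition \ref{prop:firstcherclassunivquasiholebundle}, and the only new input is the class of the descent of $P^{\boxtimes n}$ to $S^nC\times\pic^dC$. Your bookkeeping for both pieces is correct, including the descent of the K\"unneth terms via Lemma \ref{lem:descendofsumxi} and the fact that $\sum_\mu\pi_\mu^\star\alpha^r$ is the pullback of $\alpha_n^r$. Note that the paper's one-line proof asserts this descent has class $d\xi_n+\eta_{n,d}$. That contradicts the displayed statement, and it is the sign behind the $-\frac cb\eta_{m,d}$ in \eqref{chV}. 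Your $d\xi_n-\eta_{n,d}$ is the one the statement requires.

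The one substantive point is the sign you flag, and the verification you propose cannot settle it. Write $c_1(P)=d[z_0]+\gamma$ with $\gamma\in H^1(C)\otimes H^1(\pic^dC)$. Then $\pi_\star\bigl(c_1(P)^2/2\bigr)=\tfrac12\pi_\star(\gamma^2)$, which equals $-\theta_d$ for $\gamma$ and $-\gamma$ alike. Restricting to slices $\{[L]\}$ or $\{(nz_0,mz_0)\}\times\pic^dC$ kills $\gamma$, so your Seesaw consistency checks do not see it either. ``Reorienting'' $\alpha_d,\beta_d$ is also not available: they are fixed by the Abel--Jacobi convention, and flipping both leaves $\theta_d$ unchanged while flipping $\eta_{n,d}$, which is exactly the ambiguity.

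What does fix the sign is pulling back along $\mathrm{id}\times AJ_d\colon C\times C\to C\times\pic^dC$, $y\mapsto\mathcal{O}(y+(d-1)z_0)$.
\begin{itemize}
\item Comparing restrictions to $C\times\{y\}$ and $\{z_0\}\times C$, Seesaw gives $(\mathrm{id}\times AJ_d)^\star P\simeq\mathcal{O}\bigl(\Delta+(d-1)(\{z_0\}\times C)-C\times\{z_0\}\bigr)$.
\item Lemma \ref{lem:expincohomologyofbigdiag} with $n=m=1$ gives $[\Delta]=[z_0]\otimes1+1\otimes[z_0]-\eta_{1,1}$.
\item Hence $(\mathrm{id}\times AJ_d)^\star c_1(P)=d[z_0]\otimes1-\eta_{1,1}$.
\item Since $AJ_d^\star\alpha_d^r=\alpha^r$ and $AJ_d^\star\beta_d^r=\beta^r$, and $(\mathrm{id}\times AJ_d)^\star$ is injective on $H^1\otimes H^1$, you get $\gamma=-\sum_r(\alpha^r\beta_d^r+\alpha_d^r\beta^r)$.
\end{itemize}
With this step supplied, your argument proves the proposition as stated.
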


\begin{proof}
The proof is similar to that of Prop. \ref{prop:firstcherclassunivquasiholebundle}. The only difference is that we start from the descent to $S^n C \times \pic^{d}$ of $P^{\boxtimes n}$ whose first Chern class is $d\xi_n +\eta_{n,d}$ instead of the descent of $L^{\boxtimes n}$ to $S^n C$ whose first Chern class was $d \xi_n$.

We get that the difference between $c_1(\mathcal{L}_{b,c})$ and $c_1(\mathcal{U}_{b,c})$ (up to the fact that we have to pull back classes from $S^n C\times S^m C$ to $S^n C \times S^m C \times \pic^d C$ is this term $\eta_{n,d}$.
\end{proof}

\begin{proposition}
Let $V_{b,c,d,g,n,m}=\pi_\star \mathcal{U}_{b,c}$ where $\pi:S^n C \times S^m C \times \pic^d C\to S^m C \times \pic^d C$. $V_{b,c,d,g,n,m}$ is a vector bundle. Furthermore, suppose $d=b(n+g-1)+cm$. Then 
    \begin{equation}\label{chV}
        \ch(V_{b,c,d,g,n,m})=b^g e^{-\frac{c^2}{b}\theta_m  -n c \xi_m-\frac{1}{b}\theta_{d} - \frac{c}{b}\eta_{m,d}}.\end{equation}
\end{proposition}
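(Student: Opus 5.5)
The plan is to repeat the argument of Theorem~\ref{thm:mainqhff}, now over the enlarged base $S^mC\times\pic^dC$. There are three steps: show that the pushforward is a vector bundle, push forward along $\pi_1\colon S^nC\times S^mC\times\pic^dC\to\pic^nC\times S^mC\times\pic^dC$, and then integrate over $\pic^nC$ with the symplectic basis.

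\emph{Step 1: $V_{b,c,d,g,n,m}$ is a vector bundle.} Restricting $\mathcal U_{b,c}$ to the fiber over a point $(w,[L])$ gives the line bundle $L_{b,c,w}$ built from the magnetic bundle $L$. Its first Chern class is $b\theta_n+p\xi_n$, because the classes $\eta_{n,m}$, $\eta_{n,d}$ and $\xi_m$ restrict to zero on a fiber. The ampleness argument used for the vanishing theorem in Section~\ref{subsec:applyinggrr} therefore applies unchanged and gives $H^i=0$ for all $i>0$ at every point of the base. Grauert's theorem then shows that $\pi_\star\mathcal U_{b,c}$ is locally free and equals $\sum_i(-1)^iR^i\pi_\star\mathcal U_{b,c}$, so Grothendieck--Riemann--Roch computes its Chern character. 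The relative tangent bundle of $\pi$ is the pullback of $T S^nC$, so
\[
\ch(V)=\pi_\star\bigl(e^{c_1(\mathcal U_{b,c})}\td(S^nC)\bigr).
\]

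\emph{Step 2: the pushforward along $\pi_1$.} When $p=0$ the previous proposition gives $c_1(\mathcal U_{b,c})=b\theta_n+c\eta_{n,m}-nc\xi_m-\eta_{n,d}$. Every term here is pulled back from $\pic^nC\times S^mC\times\pic^dC$, so the projection formula and Remark~\ref{rem2} (via the base-change argument already used for the factor $S^mC$) give
\[
{\pi_1}_\star\bigl(e^{c_1}\td(S^nC)\bigr)=e^{c_1}.
\]

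\emph{Step 3: integration over $\pic^nC$.} Using Lemma~\ref{lem:etainsimplecticbasis} and the definitions of $\eta_{n,d}$ and $\eta_{m,d}$, the exponent splits as a sum over $l=1,\dots,g$ of
\[
b\,\alpha_n^l\beta_n^l+\alpha_n^l y_l+x_l\beta_n^l,\qquad x_l=c\alpha_m^l-\alpha_d^l,\quad y_l=c\beta_m^l-\beta_d^l .
\]
These summands are even and commute, so the exponential factorizes over $l$. Integrating over $\pic^nC$ picks out the coefficient of $\prod_l\alpha_n^l\beta_n^l$. Exactly as in the proof of Theorem~\ref{thm:mainqhff}, this coefficient in the $l$-th factor is $b-x_l y_l$, while $e^{-nc\xi_m}$ passes through untouched. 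Expanding gives
\[
x_l y_l=c^2\alpha_m^l\beta_m^l+\alpha_d^l\beta_d^l-c\bigl(\alpha_m^l\beta_d^l+\alpha_d^l\beta_m^l\bigr).
\]
Since $x_l$ and $y_l$ are odd classes, $x_l\wedge x_l=0$ and $y_l\wedge y_l=0$, hence $(x_ly_l)^2=0$ and $b-x_ly_l=b\,e^{-x_ly_l/b}$. Taking the product over $l$ and regrouping with Lemma~\ref{lem:etainsimplecticbasis} yields
\[
b^g\exp\Bigl(-\tfrac{c^2}{b}\theta_m-\tfrac1b\theta_d\pm\tfrac cb\eta_{m,d}-nc\xi_m\Bigr).
\]

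The step I expect to be delicate is the sign of the mixed term $\eta_{m,d}$. A naive expansion gives $+\frac cb\eta_{m,d}$, whereas the statement has $-\frac cb\eta_{m,d}$. I will resolve this by carefully fixing the orientation and sign conventions: first the normalization of the Poincaré bundle, which determines the sign of $\eta_{n,d}$ in $c_1(\mathcal U_{b,c})$, and second the ordering convention for the odd classes when extracting the top-degree coefficient $\alpha_n^l\beta_n^l$ in the presence of other odd classes. I will check the resulting sign against the case $m=0$, where the result must reduce to the known Hall-conductance answer $b^g e^{-\theta_d/b}$.
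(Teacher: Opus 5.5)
Your three steps follow the paper's own route, since its proof is just ``the same method as Theorem~\ref{thm:mainqhff}''. Your algebra is also right. If $c_1(\mathcal U_{b,c})$ contains $-\eta_{n,d}$, as displayed in the preceding proposition, then $x_l=c\alpha_m^l-\alpha_d^l$, the extracted coefficient is $b-x_ly_l$, and the result carries $+\frac cb\eta_{m,d}$. The gap is that you leave this sign open, and none of the tools you propose can close it:
\begin{itemize}
\item The coefficient of the even class $\alpha_n^l\beta_n^l$ is unambiguous. Reordering the odd generators or reversing the orientation of $\pic^nC$ only multiplies $\prod_l(b-x_ly_l)$ by an overall sign, which the positive rank $b^g$ already fixes. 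It never changes the sign of the cross term relative to $\theta_m$ and $\theta_d$.
\item The normalization of $P$ at $z_0$ does not touch the mixed part of $c_1(P)$ at all.
\item The test $m=0$ is blind. There is no $\eta_{m,d}$, and even the sign of $\eta_{n,d}$ drops out, because $x_ly_l=\alpha_d^l\beta_d^l$ either way.
\end{itemize}

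The only real freedom is the sign convention for $(\alpha_d^l,\beta_d^l)$. Equivalently, it is the sign $\epsilon=\pm1$ with which $\eta_{n,d}$ enters $c_1(\mathcal U_{b,c})$. Then $x_l=c\alpha_m^l+\epsilon\alpha_d^l$ and $y_l=c\beta_m^l+\epsilon\beta_d^l$, and your computation gives $-\epsilon\frac cb\eta_{m,d}$. So the displayed formula holds exactly when $\epsilon=+1$.

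This is what the paper's proof of the $c_1(\mathcal U_{b,c})$ proposition actually uses: it takes the descent of $P^{\boxtimes n}$ to have class $d\xi_n+\eta_{n,d}$, which contradicts the displayed $-\eta_{n,d}$. If instead you normalize $(\alpha_d^l,\beta_d^l)$ by Abel--Jacobi as for $\pic^nC$, you get $\epsilon=-1$. Indeed, pulling $P$ back along $\mathrm{id}\times\mathrm{AJ}$ with $\mathrm{AJ}(w)=\mathcal O(w+(d-1)z_0)$ gives $\mathcal O(\Delta+(d-1)\{z_0\}\times C-C\times\{z_0\})$. By Lemma~\ref{lem:expincohomologyofbigdiag} with $n=m=1$, its class is $d\,\xi_1-\eta_{1,d}$, and the answer then has $+\frac cb\eta_{m,d}$.

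A genuinely sign-sensitive check replaces your $m=0$ test. $V$ is the pullback of the completely filled Laughlin bundle on $\pic^{d-cm}C$ under $(w,L)\mapsto L(-cw)$, twisted by a line bundle of class $-nc\xi_m$. In the Abel--Jacobi normalization, $\theta$ pulls back under this map to $\theta_d+c^2\theta_m-c\eta_{m,d}$.

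To complete the proof, fix the convention explicitly. For example, take $(\alpha_d^l,\beta_d^l)$ to be minus the Abel--Jacobi basis; this is still symplectic and leaves $\theta_d$ unchanged. Derive $\epsilon=+1$ from that choice, and your Step~3 then yields the stated formula.
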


The proof is carried out using the same method as the proof of Thm. \ref{thm:mainqhff}. 

The first two terms in Eq.\ \eqref{chV} correspond to the transport on the quasihole moduli space, as before in Thm.\ \ref{thm:mainqhff}. The third term is the Hall conductance for the transport on $\pic^d(C)$ and we interpret the third term as the 
conductance of the quasiholes, having the charge $-c/b$ of the opposite sign compared to particles, as expected.

\section{The multilayer case, with multiple quasiholes types.}
\label{sec:multilayer}
A natural generalization to consider is to allow for multiple types of interacting charge carriers, as well as multiple types of quasiholes with different charges. 
\paragraph{Notations}
Let $N_{l}, N_{q} \in \mathbb{N}$. Throughout this section, $N_l$ will represent the number of layers of particles and $N_q$ will represent the number of types of quasiholes where a "type" of quasiholes $s$ is characterized by a collection of numbers $\lbrace C_{is}, 1\leq i \leq N_l\rbrace$ of vanishing orders of the wave-functions when a particle of a given layer $i$ has a position equal to that of the quasihole of type $s$. The collection of these coefficients form a matrix $C\in M^{N_l,{N_q}}(\mathbb{N})$.

Let $n=(n_1,\dots,n_{N_l})^T$ (resp. $\vec{m}=(m_1,\dots,m_{N_q})^T$) be a vector whose components are the number of particles in each layer (resp. number of quasiholes of each type).

In this section, for a square matrix $A$, we denote by $\vec{A}$ the vector whose components are the diagonal entries of $A$. We also write $|A|$ the sum of the coefficients of $A$. 

Let $K\in M^{N_l\times N_l}(\mathbb{N})$ be a symmetric matrix.

\subsection{Construction of the multilayer quasihole bundle and computation of its first Chern class}
\label{subsec:constructionfirstchernclassmultilayer}

Let $L\rightarrow C$ be a degree $d$ holomorphic line bundle. We can build a line bundle over the Cartesian product $C^{\sum_i n_i}$ as $L^{\boxtimes \sum_i n_i}$. 
Let $$w=(w_1^1,\dots,w_{m_1}^1,\dots, w_1^{N_q},\dots,w_{m_{N_q}}^{N_q} )$$ be a point in $C^{\sum_s m_s}$.

Wavefunctions which, for each $1\leq s \leq {N_q}$ have localized quasiholes of type $s$ at positions $w_1^j,\dots,w_{m_j}^j$ are sections of $L^{\boxtimes \sum_i n_i}$ which satisfy the following axioms:

\begin{itemize}
    \item Their restriction to each copy of $C$ gives a holomorphic section of $L$.
    \item They are symmetric (resp. antisymmetric) under exchange of two particles inside the same layer $i$ when $K_{ii}$ is even (resp. odd)
    \item They vanish at order $K_{ij}$ when two particles in layers $i$ and $j$ are in the same position.
    \item  They vanish at order $C_{is}$ when a particle in layer $i$ is at position $w_\gamma^s$ for some $\gamma$.
\end{itemize}

Similarly to the one layer and one quasihole type computation, the vanishing condition is divisorial. 
For $1 \leq i,j\leq {N_l}$, let
$$\Delta_{ij} := \bigcup_{\substack{1 \le \mu \le n_i \\ 1 \le \nu \le n_j \\ (\mu<\nu \text{ if } i=j)}} \{ z^i_\mu = z^j_\nu \}$$ and 
$$ W_{i,w^s_1,\dots,w^s_{m_s}}:=\bigcup_{\substack{1 \le \mu \le n_i \\ 1 \le \gamma \le m_s}} \{ z^i_\mu = w^s_\gamma \}$$

We write $\Delta_K=\bigcup K_{ij}\Delta_{ij}$ and $\Delta_{C,w} :=\sum_{1\leq i\leq {N_l},\, 1\leq s \leq N_{q}} C_{is} W_{i, w^s_1,\dots,w^s_{m_s} }$.

To impose the vanishing conditions, we consider 

\begin{equation}
    L^{\boxtimes \sum_i n_i}(-\Delta_K-\Delta_{C,w})
\end{equation}

On $C^{\sum_i n_i}=\prod_i C^{n_i}$, we have the blockwise action of $\prod_i \mathfrak{S}_{n_i}$. Since $L^{\boxtimes \sum_i n_i}$, $\Delta_K$, and $\Delta_{C,w}$ are all invariant under this action, the line bundle constructed above descends to a unique line bundle $L_{K,C,w}$ on $\prod_i S^{n_i} C$. By construction, multilayer quantum Hall states with quasiholes are the holomorphic section of $L_{K,C,w}$.

These form a family of states parameterized by $\prod_s S^{m_s} C$ which will form a vector bundle. We will construct this vector bundle $V_{K,C}$ as a pushforward of a universal bundle $\mathcal{L}_{K,C}$ above $\prod_i S^{n_i} C \times \prod_s S^{m_s} C$. To construct this universal line bundle $\mathcal{L}_{K,C}$, note that $\Delta_{C,w}$ is clearly the restriction to $C^{\sum_i n_i}\times \{w\} $ of a divisor $\Delta_C$ on $C^{\sum_i n_i+\sum_s m_s} $, namely the divisor defined by $$\sum_{\substack{1\leq i\leq {N_l}\\ 1\leq s \leq {N_q}}} C_{is} \bigcup_{\substack{1 \le \mu \le n_i \\ 1 \le \gamma \le m_s}} \{ z^i_\mu = w^s_\gamma \}.$$

On $\prod_i C^{n_i} \times \prod_s C^{m_s}$, $\Delta_C$ as well as the pullbacks of $L^{\boxtimes \sum_i n_i}$ and $\Delta_K$ are left invariant by the block-wise action of $\prod_i \mathfrak{S}_{n_i} \times  \prod_s \mathfrak{S}_{m_s}$. As a consequence, $$L^{\boxtimes \sum_i n_i}(-\Delta_K-\Delta_C)$$ descends to a unique line bundle $\mathcal{L}_{K,C}$. By construction, for $w$ a point in $\prod_s S^{m_s} C$, $$\mathcal{L}_{K,C}|_{\prod_i S^{n_i} C \times \lbrace w \rbrace}=L_{K,C,w}.$$

\paragraph{Computation of the first Chern class}

In $\prod_{i=1}^{N_l} S^{n_i} C \times\prod_{s=1}^{N_l} S^{m_s}C$ we have the natural cohomology classes

\begin{itemize}
    \item $\theta_{n_i}$ (resp. $\theta_{m_s}$) be the theta class pulled back from  $\pic^{n_i}C$ (resp. $\pic^{m_s}C$)
    \item  $\eta_{n_i,n_j}$ (resp. $\eta_{m_s,m_t}$) the mixed class pulled back from $\pic^{n_i} \times \pic^{n_jC}$ (resp. $\pic^{m_s} \times \pic^{m_t}C$)
    \item  $\eta_{n_i,m_s}$ is the mixed class pulled back from $\pic^{n_i} \times \pic^{m_s}C$.
    \item $\xi_{n_i}$ (resp. $\xi_{m_s}$) be the $\xi$ class pulled back from $S^{n_i} C$ (resp. $S^{m_s} C$)
\end{itemize}

where mixed class $\eta_{i,j}$ means the natural mixed class that appears when pulling back the theta divisor $\theta_{i+j}$ in $\pic^{i+j} C$ via $\pic^{i} C \times \pic^j C \to \pic^{i+j} C$.

\begin{proposition}
We have 
$$c_1(\mathcal{L}_{K,C})=\sum_{i\in [{N_l}]} (K_{ii}\theta_{n_i} + p_i \xi_{n_i}) + \sum_{i<j,i,j\in [{N_l}]} K_{ij}\eta_{n_i,n_j} - \sum_{s\in [{N_q}]} (\vec{n}^T C)_s \xi_{m_s} + \sum_{i\in [{N_l}], s\in [{N_q}]} C_{is}\eta_{n_i,m_s}$$
where the $p_{n_i}$'s are such that, writing $\vec{p}^T=(p_{n_i})_{1\leq i \leq {N_l}}$ and $\vec{K}^T=(K_{ii})_{1\leq i \leq {N_l}}$ we have: 

\begin{equation}
\label{eq:configfirst}
\vec{d} - \vec{p} = K \vec{n} + C \vec{m} + \vec{K} (g-1).
\end{equation}
In this relation, $\vec{d}$ is a column vector of size ${N_l}$ whose components are the degree of the line bundle $L$.

\begin{proof}
Before tensoring by the divisor encoding the vanishing along diagonals, the line bundle whose pullback to $C^{\sum_i n_i+\sum_s m_s}$ is $L^{\boxtimes \sum_i n_i}$ (where we omit the pullback from $C^{\sum_i n_i}$ to $C^{\sum_i n_i+\sum_s m_s}$) has chern class $d\sum_i \xi_i$. We get the result tensoring this line bundle by $O(-D)$ with $D$ the divisor whose pullback on $C^{\sum_i n_i+\sum_s m_s}$ is $\Delta_K+\Delta_C$.
\end{proof}

\end{proposition}

\subsection{Grothendieck-Riemann Roch and Kodaira vanishing}
Let
$$\pi: \prod_{i=1}^{N_l} S^{n_i} C \times\prod_{s=1}^{N_q} S^{m_s}C \overset{\pi_1}{\to} \prod_{i=1}^{N_l} \pic^{n_i} C \times\prod_{s=1}^{N_q} S^{m_s}C \overset{\pi_2}{\to} \prod_{s=1}^{N_q} S^{m_s}C$$

The Grothendieck-Riemann-Roch theorem on the proper map $\pi$ reads 
$$\ch\left(\sum_i (-1)^iR^i \pi_\star \mathcal{L}_{K,C}\right) \td \left(\prod_{s\in [{N_q}]} S^{m_s}C\right) =\pi_\star \left(e^{c_1(\mathcal{L}_{K,C})} \td\left(\prod_{i\in [{N_l}]} S^{n_i}C \times \prod_{s\in [{N_q}]} S^{m_s}C\right)\right).$$ Using the projection formula and the multiplicativity of the todd class we get
$$\ch\left(\sum_i (-1)^iR^i \pi_\star \mathcal{L}_{K,C}\right) = \pi_\star \left(e^{c_1(\mathcal{L}_{K,C})} \td\left(\prod_{i\in [{N_l}]} S^{n_i}C \right)\right)$$ 

We will show using the Kodaira vanishing theorem that all the higher direct images vanish and that the left hand side is $\ch(V)$ where $V=R^0 \pi_\star \mathcal{L}_{K,C}$.

\begin{proposition}
\label{prop:kodairavanishing}
Suppose $K-I$ is non-negative and for all $i$, $p_i>-(n_i+1-g)$. Then the Kodaira vanishing theorem applies and $\ch\left(\sum_i (-1)^iR^i \pi_\star \mathcal{L}_{K,C}\right)=\ch\left( R^0 \pi_\star \mathcal{L}_{K,C}\right)$. We write $V=R^0 \pi_\star \mathcal{L}_{K,C}$.    
\end{proposition}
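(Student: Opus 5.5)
The approach is to show that for every $w\in\prod_s S^{m_s}C$ the restriction $L_{K,C,w}=\mathcal{L}_{K,C}|_{\prod_i S^{n_i}C\times\{w\}}$ has vanishing higher cohomology. Grauert's theorem \cite[Corollary 12.9]{Hartshorne_1977}, used exactly as in the single-layer corollary, then gives $R^i\pi_\star\mathcal{L}_{K,C}=0$ for $i>0$ and shows that $R^0\pi_\star\mathcal{L}_{K,C}$ is locally free of constant rank. The vanishing itself will come from Kodaira: we write $L_{K,C,w}=\omega_X\otimes A$ with $X=\prod_i S^{n_i}C$ and $A$ ample. Restricting the formula for $c_1(\mathcal{L}_{K,C})$ to a fiber kills every class pulled back from the $S^{m_s}C$ factors, including the mixed classes $\eta_{n_i,m_s}$. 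This leaves
$$c_1(L_{K,C,w})=\sum_i\big(K_{ii}\theta_{n_i}+p_i\xi_{n_i}\big)+\sum_{i<j}K_{ij}\eta_{n_i,n_j}.$$
The canonical class of a product is the sum of the canonical classes, and the canonical class of $S^{n}C$ is $\theta_n-(n+1-g)\xi_n$, which is the single-layer computation. Hence
$$c_1(L_{K,C,w}\otimes\omega_X^{-1})=\sum_i\big((K_{ii}-1)\theta_{n_i}+(p_i+n_i+1-g)\xi_{n_i}\big)+\sum_{i<j}K_{ij}\eta_{n_i,n_j}.$$

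Next I will split this class into a part pulled back from $\prod_i\pic^{n_i}C$ and a $\xi$-part. Using Lemma \ref{lem:etainsimplecticbasis} in each symplectic direction $r$, the $\theta/\eta$ part equals $\sum_r\sum_{i,j}(K-I)_{ij}\,\alpha^r_{n_i}\wedge\beta^r_{n_j}$, with $\eta_{n_i,n_j}$ contributing symmetrically. This is the pullback under the addition-type map $\prod_i\pic^{n_i}C\to J(C)^{N_l}$ of the polarization-type form associated with the quadratic form $K-I$ tensored with $\theta$. Whenever $K-I$ is non-negative, this class is therefore nef. One way to see this is to diagonalize $K-I=\sum_k\lambda_k v_kv_k^T$ with $\lambda_k\ge0$. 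Each term then becomes $\lambda_k$ times the pullback of $\theta$ under the homomorphism $\prod_i J\to J\otimes\mathbb{R}$ given by $v_k$, which is semipositive; equivalently one can write the Hermitian form directly as a positive semidefinite combination.

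The remaining part $\sum_i(p_i+n_i+1-g)\xi_{n_i}$ has all coefficients positive by hypothesis, and each $\xi_{n_i}$ is relatively ample for $S^{n_i}C\to\pic^{n_i}C$. The single-layer result of \cite{Klevtsov_Zvonkine_2025} gives ampleness of $(b-1)\theta_n+(p+n+1-g)\xi_n$ for $b\ge1$ and $p>-n-1+g$. Its case $b=1$ shows that $(p_i+n_i+1-g)\xi_{n_i}$ is ample on $S^{n_i}C$. The external sum $\sum_i(p_i+n_i+1-g)\xi_{n_i}$ is then ample on the product, and adding the nef class above keeps it ample by Kleiman's criterion. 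Kodaira vanishing then yields $H^q(X,L_{K,C,w})=0$ for $q>0$ and all $w$, and the conclusion follows.

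The main obstacle is the nefness of the mixed $\theta/\eta$ block. The cross terms $\eta_{n_i,n_j}$ are not individually nef, so the argument must use the non-negativity of the whole form $K-I$, with the diagonal shifted by the $-\theta_{n_i}$ coming from the canonical class. I would first verify this at the level of the constant-coefficient $(1,1)$-forms on the torus $\prod_i\pic^{n_i}C$. With $\theta=\frac{i}{2}\sum dz\wedge d\bar z$-type representatives, the form equals $\sum_{i,j}(K-I)_{ij}\,\frac{i}{2}\,dZ_i\wedge d\bar Z_j$ in each coordinate $r$, so it is semipositive exactly when $K-I\ge0$. Its pullback to $X$ is then a semipositive form.
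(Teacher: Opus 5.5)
Your proof is correct and follows the paper's route. You restrict to a fiber, compute $c_1(L_{K,C,w}\otimes\omega_X^{-1})=\sum_i(p_i+n_i+1-g)\xi_{n_i}+\sum_i(K-I)_{ii}\theta_{n_i}+\sum_{i<j}K_{ij}\eta_{n_i,n_j}$, apply Kodaira vanishing fiberwise, and conclude with Grauert. The one difference is the ampleness step: the paper cites \cite[Prop.~4.1.3]{Aldonza_Dupont_2025}, whereas you prove it directly by writing the class as the ample part $\sum_i(p_i+n_i+1-g)\xi_{n_i}$ plus the pullback from $\prod_i\pic^{n_i}C$ of the constant semipositive $(1,1)$-form attached to $K-I\geq 0$, which is nef. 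This makes your version self-contained, and it shows that ``non-negative'' must mean positive semidefinite, as in the hypothesis of Theorem~\ref{thm:mainqhmulti}.
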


\begin{proof}
Let $y \in \prod_s S^{m_s}C$ and $\mathcal{L}_{K,C}={\mathcal{L}_{K,C}}_{|y}$. Let $(K,C,d,g,n,m)$ be a configuration as in the statement. We write $\omega_X$ the canonical line bundle on $X=\prod_{i=1}^{{N_l}}S^{n_i}C$.
The result will follow from the Kodaira vanishing theorem if we show that $L_{K,C} \otimes {\omega_X}^{-1}$ is ample. We know that $\omega_{S^NC}=\theta_N-(N+1-g)\xi_N$, thus 
$$\omega_X=\sum_i \theta_{n_i}-(n_i+1-g)\xi_{n_i}.$$
By the results of subsection \ref{subsec:constructionfirstchernclassmultilayer}, $L_{K,C} \otimes \omega_X^{-1}$ has first Chern class

$$\sum_{i} (K_{ii}  \theta_{n_i}  + p_{n_i} \xi_{n_i} )+\sum_{i<j} K_{ij} \eta_{n_i,n_j} - \sum_i \left(\theta_{n_i} -(n_i+1-g) \xi_{n_i} \right)$$ which can be rewritten
$$\sum_{i}(p_{n_i}+(n_i+1-g)) \xi_{n_i} + \sum_{i<j} (K-I)_{ij} \eta_{n_i,n_j} + \sum_i (K-I)_{ii} \theta_{n_i} .$$ By \cite[Prop. 4.1.3]{Aldonza_Dupont_2025}, this class is ample.

\end{proof}

\begin{corollary}
\label{corr:kodaira}
    Let $(K,C,d,g,n,m)$ be a configuration as in proposition \ref{prop:kodairavanishing}. From the latter proposition as well as Grauert theorem, we find that $\sum_i (-1)^iR^i \pi_\star \mathcal{L}_{K,C}=R^0 \pi_\star \mathcal{L}_{K,C} :=V_{K,C,g,d,n,m} $ is a vector bundle and that the l.h.s. of the Grothendieck-Riemann-Roch formula compute its Chern character: $\ch\left(\sum_i (-1)^iR^i \pi_\star \mathcal{L}_{K,C}\right)=\ch(V_{K,C,g,d,n,m})$.
\end{corollary}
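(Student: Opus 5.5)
Corollary \ref{corr:kodaira} follows by combining Proposition \ref{prop:kodairavanishing} with Grauert's theorem, exactly as in the single-layer case. Fix $y\in\prod_s S^{m_s}C$ and write $X=\prod_{i} S^{n_i}C$. The restriction ${\mathcal{L}_{K,C}}_{|X\times\{y\}}=L_{K,C,y}$ satisfies $L_{K,C,y}=\omega_X\otimes(L_{K,C,y}\otimes\omega_X^{-1})$. Proposition \ref{prop:kodairavanishing}, which rests on the ampleness result of \cite[Prop. 4.1.3]{Aldonza_Dupont_2025}, says the second factor is ample under the hypotheses that $K-I$ is non-negative and $p_i>-(n_i+1-g)$. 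Kodaira vanishing therefore gives $H^q(X,L_{K,C,y})=0$ for all $q>0$ and every $y$.

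We then apply Grauert's theorem \cite[Corollary 12.9]{Hartshorne_1977} to the projection $\pi$. This map is projective and flat, being a product projection with smooth projective fibre $X$, and $\mathcal{L}_{K,C}$ is a line bundle, hence flat over the base. Since $h^q(X\times\{y\},\mathcal{L}_{K,C})=0$ is constant in $y$ for $q>0$, Grauert gives $R^q\pi_\star\mathcal{L}_{K,C}=0$ for $q>0$. For $q=0$, the Euler characteristic is locally constant in a flat family, so $h^0=\chi$ is also constant in $y$. The base $\prod_s S^{m_s}C$ is smooth and connected, hence reduced, so Grauert's theorem again shows that $R^0\pi_\star\mathcal{L}_{K,C}$ is locally free, with fibre $H^0(X,L_{K,C,y})$ by cohomology and base change.

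Consequently the alternating sum $\sum_i(-1)^iR^i\pi_\star\mathcal{L}_{K,C}$ in the Grothendieck--Riemann--Roch formula collapses in $K$-theory to the single class $[V_{K,C,g,d,n,m}]$. Its Chern character is then the left-hand side of that formula, as stated.

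The only step requiring care is the uniformity in $y$: the ampleness must hold for every fibre, not just generically. This is automatic, because $c_1(L_{K,C,y})$, computed by restricting the formula for $c_1(\mathcal{L}_{K,C})$, does not depend on $y$, since all the $\xi_{m_s}$, $\theta_{m_s}$ and $\eta_{n_i,m_s}$ terms restrict to zero. Ampleness, a numerical condition, is therefore the same on every fibre.
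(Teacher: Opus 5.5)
Your proposal is correct and follows the paper's route: fiberwise Kodaira vanishing from the ampleness of $L_{K,C,y}\otimes\omega_X^{-1}$ established in Proposition \ref{prop:kodairavanishing}, then Grauert's theorem to kill $R^q\pi_\star\mathcal{L}_{K,C}$ for $q>0$ and make $R^0\pi_\star\mathcal{L}_{K,C}$ locally free. You also spell out details the paper leaves implicit: flatness, constancy of $h^0$ via the Euler characteristic, integrality of the base (needed for Grauert in every degree, not only $q=0$), and independence of the fiberwise Chern class from $y$.
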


\subsection{Computation of the pushforward}

\label{sec:symplectic-basis}
	
    As previously, we introduce symplectic bases $\lbrace \alpha_{n_i}^l,\beta_{n_i}^l,1\leq l\leq g \rbrace$ and $\lbrace {\alpha}_{m_s}^l,{\beta}_{m_s}^l,1\leq l\leq g \rbrace$ of each $H^1(\pic^{n_i}(C),\mathbb{Z})$ and $H^1(\pic^{m_s}(C),\mathbb{Z})$. With these notations, the elements of $NS( \prod_{i=1}^{N_l} S^{n_i} C \times\prod_{s=1}^{N_q} S^{m_s}C)$ we introduced can be represented as

    $$\theta_{n_i}=\sum_{l=1}^g \alpha_{n_i}^l \wedge \beta_{n_i}^l \text{ (resp. }\theta_{m_s}=\sum_{l=1}^g {\alpha}_{m_s}^l \wedge \beta_{m_s}^l)$$
	$$\eta_{n_i,n_j}=\sum_{l=1}^g (\alpha_{n_i}^l \wedge\beta_{n_j}^l +\alpha_{n_j}^l \wedge\beta_{n_i}^l) \text{ (resp. } \eta_{m_s,m_t}=\sum_{l=1}^g ({\alpha}_{m_s}^l \wedge {\beta}_{m_t}^l +{\alpha}_{m_t}^l \wedge {\beta}_{m_s}^l)$$
 	$$\eta_{n_i,m_s}=\sum_{l=1}^g (\alpha_{n_i}^l \wedge {\beta}_{m_s}^l +{\alpha}_{m_s}^l \wedge\beta_{n_i}^l)$$

\begin{lemma}
Suppose $\vec{d}=K\vec{n}+C\vec{m}+\vec{K}(g-1)$. Then 

$${\pi_1}_\star \left(e^{c_1(\mathcal{L}_{K,C})} \td\left(\prod_{i\in [{N_l}]} S^{n_i}C \right)\right)=e^{c_1(\mathcal{L}_{K,C})}$$
    
\end{lemma}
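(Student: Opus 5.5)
The plan is to mimic the single-layer completely filled argument in the proof of Theorem \ref{thm:mainqhff}. The hypothesis $\vec{d}=K\vec{n}+C\vec{m}+\vec{K}(g-1)$ means, by Eq.~\eqref{eq:configfirst}, that $\vec{p}=0$, i.e.\ $p_i=0$ for every layer $i$. By the proposition computing $c_1(\mathcal{L}_{K,C})$, the first Chern class then contains no $\xi_{n_i}$ terms. It is
$$c_1(\mathcal{L}_{K,C})=\sum_i K_{ii}\theta_{n_i}+\sum_{i<j}K_{ij}\eta_{n_i,n_j}+\sum_{i,s}C_{is}\eta_{n_i,m_s}-\sum_s(\vec{n}^TC)_s\xi_{m_s}.$$
Every summand here is pulled back from $\prod_i\pic^{n_i}C\times\prod_s S^{m_s}C$ along $\pi_1$: the $\theta$ and $\eta$ classes come from the Picard factors (see their expression in the symplectic bases of \S\ref{sec:symplectic-basis}), and the $\xi_{m_s}$ classes live on the factors that $\pi_1$ does not touch. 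Hence $e^{c_1(\mathcal{L}_{K,C})}=\pi_1^\star(\cdots)$, and the projection formula gives
$${\pi_1}_\star\Bigl(e^{c_1(\mathcal{L}_{K,C})}\td\Bigl(\prod_i S^{n_i}C\Bigr)\Bigr)=e^{c_1(\mathcal{L}_{K,C})}\,{\pi_1}_\star\,\td\Bigl(\prod_i S^{n_i}C\Bigr).$$

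It then remains to show ${\pi_1}_\star\td(\prod_i S^{n_i}C)=1$. The map $\pi_1$ is the product $\prod_i\pi'_{1,i}$ of the Abel--Jacobi maps $\pi'_{1,i}:S^{n_i}C\to\pic^{n_i}C$ with the identity on $\prod_s S^{m_s}C$. The Todd class is multiplicative, so $\td(\prod_i S^{n_i}C)=\prod_i \mathrm{pr}_i^\star\td(S^{n_i}C)$. Pushforward along a product of maps of a product of pulled-back classes is the product of the individual pushforwards; this is the Künneth compatibility, obtained by iterating the base change argument used in the remark following Lemma \ref{lem:firstpushforward_lemma1}. Each factor ${\pi'_{1,i}}_\star\td(S^{n_i}C)$ equals $1$ by Remark \ref{rem2}, which needs $n_i\ge 2g-1$. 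Alternatively it follows from Lemma \ref{lem:firstpushforward_lemma1} with $p=0$, since then $f\equiv\binom{n_i-g}{0}=1$. Pulling back along the identity on the $S^{m_s}C$ factors, the product is $1$.

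The only step needing real care is the factorization of the pushforward over the product. I would handle it by pushing forward one layer at a time. At each step, use the Cartesian square $S^{n_i}C\times Y\to\pic^{n_i}C\times Y$, where $Y$ collects all the other factors, and apply base change as in the remark after Lemma \ref{lem:firstpushforward_lemma1}. Each step replaces one Todd factor by $1$ and leaves the classes pulled back from $Y$ unchanged by the projection formula. I must also assume $n_i\ge 2g-1$ for all $i$, as in the single-layer setting, so that Remark \ref{rem2} and the description of $S^{n_i}C$ as a projective bundle apply.
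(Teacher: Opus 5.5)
Your proposal is correct and follows the paper's proof: $\vec{p}=0$ removes the $\xi_{n_i}$ terms, so $c_1(\mathcal{L}_{K,C})$ is a $\pi_1$-pullback. The projection formula then pulls $e^{c_1(\mathcal{L}_{K,C})}$ out, and multiplicativity of the Todd class together with ${\pi'_{1,i}}_\star\td(S^{n_i}C)=1$ for each factor finishes the argument. You also spell out two points the paper leaves implicit: the layer-by-layer base-change justification for factoring the pushforward over the product, and the standing hypothesis $n_i\geq 2g-1$.
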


\begin{proof}
Notice that when $\vec{d}=K\vec{n}+C\vec{m}+\vec{K}(g-1)$, the cohomology class $c_1(\mathcal{L}_{K,C})$ is a pullback via $\pi_1$ of a class on $\prod_i \pic^{n_i} C \times \prod_s S^{m_s} C$, thus the projection formula gives 
\eqtag{{\pi_1}_\star \left(e^{c_1(\mathcal{L}_{K,C})} \td\left(\prod_{i\in [{N_l}]} S^{n_i}C \right)\right)=e^{c_1(\mathcal{L}_{K,C})} {\pi_1}_\star \left( \td\left(\prod_{i\in [{N_l}]} S^{n_i}C \right)\right).}{eq:pushforwardp1simple} 
The Todd class is multiplicative and $\pi_1$ acts on each factor independently. By lemma \ref{lem:firstpushforward_lemma1}, we have 
$${\pi_1}_\star \left( \td\left(\prod_{i\in [{N_l}]} S^{n_i}C \right)\right)=\prod_{i\in [{N_l}]}  {\pi_1}_\star \left( \td\left(S^{n_i}C \right)\right)=1
$$ where we omit the write pullbacks and in each factor of the product we denoted by${\pi_1}_\star$ the projection on a single factor $S^{n_i}C\to \pic^{n_i} C$.
\end{proof}

Let $\Theta_m$ be a matrix of differential forms of size ${N_q}\times {N_q}$ defined by 

\begin{equation}
\label{eq:bigtheta}
\Theta_m=\left( \sum_l \left( {\alpha}^l_{m_s} \wedge {\beta}^l_{m_t} \right) \right)_{1\leq s,t \leq {N_q}}
\end{equation}
and $\xi_m$ the vector 
\begin{equation}
\label{eq:xivect}
\xi_m=\left(\xi_{m_s}\right)_{1\leq s\leq {N_q}}.
\end{equation}

\mainqhmulti*

\begin{proof}
What is left is to compute ${\pi_2}_\star$ which corresponds to integration on $\prod_{i=1}^{N_l} \pic^{n_i} C$. It selects only the volume form of $\prod_{i=1}^{N_l} \pic^{n_i} C$. This is a combinatorial problem, we can forget that the $\psi_i^r$,$\ov{\psi^s_j}$ are differential forms and simply see them as generators of a so called Grassmann algebra. For more details about Grassmann algebras, see \cite{Berezin_1987,Caracciolo_Sokal_Sportiello_2013}. Here, we recall that for $R$ a commutative ring with identity and $m$ symbols $\chi_a, 1 \leq a \leq m$, the associated Grassmann algebra is the algebra generated by the $\chi_a$ quotiented by the relations $\chi_a \chi_b + \chi_b \chi_a = 0$ for $1\leq a < b \leq m$ and $\chi_a^2=0, 1\leq a \leq m$. On such an algebra we can define an $R$-linear operator $\int d \chi_a$ as 
 $$\int d\chi_a \chi_{a_1}\dots \chi_{a_q}=
\begin{cases}
    (-1)^{\delta-1} \chi_{a_1}\dots \chi_{a_{\delta-1}},\chi_{a_{\delta+1}}\dots, \chi_{a_q} &\text{if } a = a_\delta \\
    0 & \text{if } a \notin \lbrace a_1,\dots, a_q\rbrace.
\end{cases}
$$ for any monomial $\chi_{a_1}\dots \chi_{a_q}$ with $a_1 <\dots < a_q$. A Grassmann algebra is a graded algebra, with $\deg\left(\chi_{a_1}\dots \chi_{a_q}\right)=q$.

In our case, we introduce the Grassmann algebra generated by the symbols $\psi^l=(\beta_{n_i}^l)_{1\leq i \leq {N_l}}$, $\ov{\psi^l}=(\alpha_{n_i}^l)_{1\leq i \leq {N_l}}$, $\phi^l=({\beta}_{m_s}^l)_{1\leq s \leq {N_q}}$ and $\ov{\phi^l}=({\alpha}_{m_s}^l)_{1\leq s \leq {N_q}}$. As an example, fix $i\in[k],r\in[g]$ and $\kappa$ an element of the Grassman algebra which does not contain $\psi_i^r$. Then for any $j \in[k],s\in[g]$ we have $$\int d{\psi^r_i} \psi^s_{j} \kappa  = \kappa \delta_i^j \delta_r^s$$ while $$\int d{\psi^r_i} \kappa \psi^s_{j}   = (-1)^{\deg(\kappa)} \kappa \delta_i^j \delta_r^s$$
where $\delta_i^j$ is the Kronecker symbol. Note that to shorten the notation, we omit the $\wedge$ symbol when multiplying forms. With these variables, 

$$c_1(\mathcal{L}_{K,C})=\sum_{1\leq l \leq g} \left( \ov{\psi^l}^T K \psi^l - \ov{\phi^l}^T C^T \psi^l - \ov{\psi^l}^T C \phi^l \right) - \vec{n}^T C \xi_m.$$ By the projection formula, 

$${\pi_2}_\star \left( e^{\sum_{l=1}^g \left( \ov{\psi^l}^T K \psi^l - \ov{\phi^l}^T C^T \psi^l - \ov{\psi^l}^T C \phi \right) - \vec{n}^T C \xi_m}\right)=e^{- \vec{n}^T C \xi_m} {\pi_2}_\star \left( e^{\sum_{l=1}^g \left( \ov{\psi^l}^T K \psi^l - \ov{\phi^l}^T C^T \psi^l - \ov{\psi^l}^T C \phi^l \right) } \right).$$

Note that in $\prod_{i=1}^{k} \prod_{r=1}^g \ov{\psi^r_i} \psi^r_i $, the ordering in $r$ and $i$ does not matter because each element of the form $\ov{\psi^r_i} {\psi^r}_i$ is even in the Grassmann algebra.

In our computation of the pushforward under $\pi_1$, we regrouped terms when they belonged to the same layer $S^{n_i} C$, for $1\leq i \leq k$. 
Yet, in the notations we introduced, the quantity we want to pushforward takes the form of a product over upper indices $1\leq r \leq g$, so we will carry out the extraction as 
$${\pi_2}_\star= \int \prod_{r=1}^g \left[ D(\psi^r,\ov{\psi^r}) \right].$$ 
where $D(\psi^r,\ov{\psi^r})= \prod_{i=1}^k d\psi^r_i d\ov{\psi^r}_i$.
We denote by $\int d {\psi^l}_{i}$ (resp. $\int d \ov{\psi^l}_i$) the operator that selects the terms containing ${\psi^l}_{i}$ (resp. $\ov{\psi^l}_i$) and integrate it out. With those notations, $${\pi_2}_\star= \int \prod_{l=1}^g \left[ D(\psi^l,\ov{\psi^l}) \right]$$ where we introduced $D(\psi^l,\ov{\psi^l})= \prod_{i=1}^{N_l} d\psi^l_i d\ov{\psi^l}_i$. We regroup terms $d\psi^l_i d\ov{\psi^l}_i$ having the same upper index $l$ because the expression from which we extract coefficients has itself the form of a product which separates variables.

By Wick’s theorem for "complex fermions" (see \cite{Caracciolo_Sokal_Sportiello_2013}[Thm. A.16]), we have for each $l$ fixed

$${\pi_2}_\star\left( e^{ \ov{\psi^l}^T K \psi^l - \ov{\phi^l}^T C^T \psi^l - \ov{\psi^l}^T C \phi }\right)= \det(K) e^{-\ov{\phi^l}^T C^T K^{-1} C \phi^l}.$$ It follows that
$$\ch(V_{K,C,d,g,\vec{n},\vec{m}})=\prod_{l=1}^g \left( \det(K) e^{-\ov{\phi^l}^T C^T K^{-1} C \phi^l} \right) e^{- \vec{n}^T C \xi_m}.$$
The result is then obtained by using $(\Theta_m)_{s,t}=\sum_l {\ov{\phi^l}_s} {\phi^l_t}$.

\end{proof}

\printbibliography

\end{document}